\theoremstyle{plain}
\newtheorem{theorem}{Theorem}[section]
\newtheorem{lemma}[theorem]{Lemma}
\newtheorem{remark}[theorem]{Remark}
\newtheorem{pro}[theorem]{Proposition}
\newtheorem*{step1}{Step 1}
\newtheorem*{step2}{Step 2}
\newtheorem*{step3}{Step 3}
\newtheorem*{step4}{Step 4}
\begin{document}
\title{Surfaces with $\chi=5, K^{2}=9$ and a canonical involution}
\author{Zhiming Lin}

\let\thefootnote\relax\footnotetext{2010 Mathematics Subject Classification: 14J29}
\keywords{surfaces of general type, classification, canonical involution.}

\maketitle



\bigskip

\section{introduction}

Complex nonsingular minimal projective surfaces of general type with $K^2=2\chi-n$ have been classified for various $n$'s ($n\leq6$ by Noether's inequality): Horikawa \cite{Horikawa 1} \cite{Horikawa 3} for $4\leq n\leq6$; Liu \cite{Liu} for $n=3$ and $\chi\geq6$; Bauer \cite{Bauer 1} for $n=3$ and $\chi=5$; Marti-Sanchez \cite{Marti Sanchez} for $n=2$ and $\chi\geq6$; Bauer and Pignatelli \cite{Bauer 2} for $n=2$, $\chi=5$ and with a canonical involution; Catanese, Liu and Pignatelli \cite{Catanese 2} for $n=2$, $\chi=5$ and with an even canonical divisor;  Werner \cite{Werner} for $n=1$ and $\chi\geq 7$; Murakami \cite{Murakami 1} \cite{Murakami 2} for $n=1$, $\chi=4$ and with a non-trivial torsion.

In this paper we consider the case $n=1$ and  $\chi=5$. We follow the arguments of Bauer and Pignatelli \cite{Bauer 2} to classify these surfaces under the assumption that the canonical map factors through an involution. Our main results are the following (see Proposition \ref{dimension} and Theorem \ref{component} for more precise statement).

\begin{theorem}\label{main}
Let $S$ be a complex nonsingular minimal projective surface of general type with $\chi=5$ and $K^2=9$. We assume that the canonical map of $S$ factors through an involution. Then the moduli space of such surfaces consists of six families, whose dimensions are $28$, $27$, $33$, $32$, $31$ and $32$ respectively. Among them, the family of surfaces having a genus $2$ fibration forms an irreducible component of $\mathcal{M}_{5, 9}$, the moduli space of surfaces with $\chi=5$ and $K^2=9$.
\end{theorem}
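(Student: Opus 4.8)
The plan is to follow the program of Bauer and Pignatelli, exploiting the canonical involution $\sigma$ to reduce the problem to the geometry of the quotient together with an associated double cover. First I would set $\Sigma := S/\sigma$ and, after resolving the rational double points coming from the isolated fixed points of $\sigma$, produce a smooth surface $W$ carrying a flat double cover $V \to W$ (with $V$ the smooth model of $S$) branched along an effective divisor $B \subset W$ with $B \equiv 2L$ for a line bundle $L$. The standard double-cover formulas then express $\chi(\mathcal{O}_S)=5$ and $K_S^2=9$ in terms of $K_W^2$, $K_W\cdot L$, $L^2$ and the number $t$ of isolated fixed points. Crucially, the hypothesis that the canonical map factors through $\sigma$ forces $H^0(K_S)$ to be pulled back from $W$, which pins down $K_S$ in terms of $K_W+L$ and the exceptional locus and severely restricts the admissible triples $(W,L,B)$.

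Next I would run the case analysis, sorting the admissible data by the Kodaira dimension and fibration structure of $W$ (equivalently, by the irregularity $q$ of $S$ and the presence or absence of a low-genus pencil). One should recover exactly six configurations. For each I would parametrize the data by letting $W$ range in its family (rational or ruled) and $B$ move in a linear system, and count
\[
\dim(\text{family of }W) + \dim|B| - \dim\operatorname{Aut}(W),
\]
corrected for the choice of square root $L$ and for the position of the $t$ nodes. Carrying this through is computational and relies on the vanishing and base-point-freeness statements established earlier in the paper; it is expected to yield the dimensions $28, 27, 33, 32, 31$ and $32$.

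The substantive point is the final claim, that the family $\mathcal{F}$ of surfaces carrying a genus $2$ fibration $f\colon S\to B$ is an irreducible component of $\mathcal{M}_{5,9}$. Here $\sigma$ restricts to the hyperelliptic involution on the fibers, so $S$ falls under Horikawa's theory of genus $2$ fibrations and $\mathcal{F}$ is parametrized by an irreducible space (the base curve together with the relative branch data), giving irreducibility and a dimension $d$ equal to one of the six numbers above. To promote this to an irreducible component I would show that $\dim_{[S]}\mathcal{M}_{5,9}=d$. Since $\mathcal{F}\subseteq\mathcal{M}_{5,9}$ has dimension $d$, it suffices to prove $h^1(S,T_S)=d$: then the Kuranishi space is smooth of dimension $\le h^1(T_S)=d$, hence exactly $d$, and the irreducible $d$-dimensional $\mathcal{F}$ fills out a component. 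Equivalently, one argues that $\mathcal{F}$ is closed (admitting a genus $2$ pencil is a closed condition) and open (the pencil persists under first-order deformation), so $\mathcal{F}$ is a connected, and thus irreducible, component.

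The main obstacle I anticipate is precisely this dimension-matching step. Controlling $h^1(S,T_S)$ — showing that every first-order deformation preserves the fibration and that the obstruction space contributes nothing — requires a careful analysis via the relative tangent sequence of $f$ and of the deformations of the relative branch data, splitting the deformations of $S$ into those of the base, those of the fibration, and those of the double-cover data. It is exactly here that the genus $2$ family must be separated from the other five, which need not be unobstructed and need not form components.
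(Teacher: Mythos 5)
Your proposal follows essentially the same route as the paper: resolve the quotient of $S$ by the canonical involution, classify the admissible branch data on the resulting smooth quotient via the double-cover formulas, compute each family's dimension as $\dim(\text{family of }W)+\dim|B|-\dim\operatorname{Aut}(W)$, and prove the component claim by showing that a general surface with a genus $2$ fibration has $h^{1}(S,\mathcal{T}_{S})=32$, so that $\dim_{[S]}\mathcal{M}_{5,9}\le h^{1}(S,\mathcal{T}_{S})=32=\dim\mathcal{F}$ (this last inequality needs no smoothness of the Kuranishi space, which, contrary to your aside, is not automatic); this is exactly the paper's Theorem \ref{component}. However, one of your intermediate steps is wrong as stated: the invariant you propose for organizing the case analysis --- the Kodaira dimension and fibration structure of $W$, ``equivalently'' the irregularity $q$ of $S$ --- is degenerate in this problem, since every quotient that occurs is a rational surface and every surface in question has $q(S)=0$, $p_{g}(S)=4$. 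The classification is actually driven by the number $\tau$ of isolated fixed points of the involution: since $p_{g}(\widehat{T})=0$, Lemma \ref{main lemma} gives $\tau=5-2h^{0}(2K_{\widehat{T}}+\widehat{\delta})$, hence $\tau\in\{1,3,5\}$, and the six families arise as two families with $\tau=1$, three with $\tau=3$, and the genus $2$ case $\tau=5$. Without this parity argument your ``exactly six configurations'' has no a priori justification.

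The more serious gap is that you treat the irreducibility of the genus $2$ family $\mathcal{F}=\mathcal{M}_{6}$ as automatic from Horikawa's parametrization. It is not: $\mathcal{M}_{6}$ is a priori a union of twelve numerically distinct sub-families (branch curves in $|6\Delta+10\Gamma|$, $|6\Delta+8\Gamma|$ or $|6\Delta+6\Gamma|$ on $\mathds{F}_{0}$ or $\mathds{F}_{2}$, with six different singularity configurations), whose dimensions range from $27$ to $32$. The paper needs a dedicated argument (Theorem 6.5) to show that every one of them deforms into the $32$-dimensional family $\mathcal{M}_{1,0}$: it uses the Catanese--Pignatelli structure data $(V_{1},t,\varepsilon,\omega)$ of a genus $2$ fibration, the invariance of $\tau$ under deformations preserving the involution, and the estimate $h^{1}-h^{2}\ge-\chi\bigl(\Omega^{1}_{\widehat{\mathcal{C}}}(\log\widehat{\mathcal{B}})\otimes\Omega^{2}_{\widehat{\mathcal{C}}}\bigr)=32$ for the invariant part of the deformations, which shows the involution locus containing each sub-family has dimension at least $32$. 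Only after this connectedness step does the computation $h^{1}(S,\mathcal{T}_{S})=32$ (carried out, as you anticipate, by splitting $\varphi_{*}(\Omega^{1}_{\widehat{S}}\otimes\Omega^{2}_{\widehat{S}})$ into invariant and anti-invariant parts via Catanese's formula) close the argument. Your alternative ``open and closed condition'' shortcut is no shortcut at all: the openness assertion, that the genus $2$ pencil persists under every small deformation, is precisely what the $h^{1}$ computation is needed to establish, so it cannot be invoked in its place.
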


The paper is organized as follows.

We recall some tools in Section 2, then we show that the canonical involution $i$ has 1, 3 or 5 isolated fixed points in Section 3. Sections 4, 5 and 6 are devoted to classifying surface case by case according to the number of the fixed points of $i$.

In Section 7, we study the moduli space of these minimal surfaces. By Kuranishi's theorem each irreducible component of the moduli space $\mathcal{M}_{5, 9}$ has dimension at least 32. In addition, the general point of the irreducible component, in which the family of dimension 30 or 27 is contained, is a surface without a canonical involution.

\bigskip

\textbf{Acknowledgement.} I am heavily indebted to my supervisor Professor Jinxing Cai for his patient guidance and helping me out of the difficulties during the hard time of this paper. My special thanks go to Wenfei Liu and Lei Zhang for sharing many of their ideas and revisions. I also thank Yi Gu, Jingshan Chen and Songbo Ling for numerous useful discussions.

\bigskip

\textbf{Conventions.} We work over the complex field $\mathbb{C}$. By surface we mean a smooth projective surface. We do not distinguish between line bundles and divisors on a smooth surface, and use the additive and the multiplicative notation interchangeably. We write $\equiv$ for linear equivalence and $\sim$ for numerical equivalence. Unless specifically stated, we use the standard definition in \cite{BPV} of the usual notations.


\bigskip

\section{Preliminaries}

We recall some basic results about the involution on a surface.

Let $S$ be a minimal surface which has an involution $i$. The fixed locus of $i$ is the union of a smooth curve $R$ (possibly empty) and isolated points $p_{1}, \ldots, p_{\tau}$. Let $\pi':S\rightarrow S/i$ be the quotient map. Then the surface $S/i$ is normal and $\pi'(p_{1}), \ldots,\pi'(p_{\tau})$ are ordinary double points, which are the only singularities of $S/i$. Resolving these singularities we get a commutative diagram
$$\xymatrix{
    \widehat{S} \ar[r]^{\epsilon}\ar[d]_{\pi} & S \ar[d]_{\pi'} \\
    \widehat{T} \ar[r]^{} & S/i
    }$$
where $\epsilon$ is the blowing-up of S at $p_{1}, \ldots, p_{\tau}$, $\widehat{T}=\widehat{S}/\widehat{i}$ is a smooth surface and $\widehat{i}$ is the involution on $\widehat{S}$ induced by $i$. Let $E_{i}\doteq \epsilon^{-1}(p_{i})$ be the ($-1$)-curves on $\widehat{S}$, so $A_{i}\doteq\pi(E_{i})$ are the ($-2$)-curves on $\widehat{T}$. Put $\widehat{\delta}\doteq\frac{1}{2}(\pi(R)+A)$ and $\overline{\delta}\doteq\widehat{\delta}-\frac{1}{2}A=\frac{1}{2}\pi(R)$, where $A\doteq\sum_{i=1}^{\tau}A_{i}$. Sometimes, when we study the birational morphism, we do not distinguish the curve from the total inverse image of it for convenience (i.e. the curve $R$ on $S$ and $\epsilon^{*}R$ on $\widehat{S}$).

\begin{lemma}\label{main lemma}
With the notation as above, we have
\begin{enumerate}[1)]
\item $K_{\widehat{T}}+\overline{\delta}$ is big and nef;
\item $K^{2}_{S}-\tau=K^{2}_{\widehat{S}}=2(K_{\widehat{T}}+\widehat{\delta})^{2}$ and $K^{2}_{S}=2(K_{\widehat{T}}+\overline{\delta})^{2}$;
\item $h^{i}(2K_{\widehat{T}}+\widehat{\delta})=0(i>0)$ and $h^{0}(2K_{\widehat{T}}+\widehat{\delta})=\chi(\mathcal{O}_{\widehat{T}})+ \frac{1}{2}(2K_{\widehat{T}}+\widehat{\delta}) (K_{\widehat{T}}+\widehat{\delta})$;
\item $\chi(\mathcal{O}_{S})=2\chi(\mathcal{O}_{\widehat{T}})+\frac{1}{2}(K_{\widehat{T}}+ \widehat{\delta})\widehat{\delta}$;
\item $\chi(\mathcal{O}_{\widehat{T}})=\frac{1}{2}\chi(\mathcal{O}_{S})-\frac{1}{8}(K_{S}R-\tau)$;
\item $0\leq \tau =K_{S}^{2}+6\chi(\mathcal{O}_{\widehat{T}})-2\chi(\mathcal{O}_{S})- 2h^{0}(2K_{\widehat{T}}+\widehat{\delta})$.
\end{enumerate}
\end{lemma}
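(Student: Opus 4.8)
The plan is to exploit the structure of the smooth double cover $\pi:\widehat{S}\to\widehat{T}$ and reduce every assertion to intersection theory on $\widehat{T}$, Riemann--Roch, and a single vanishing theorem. The facts I would record first are the standard double-cover formulas $\pi_{*}\mathcal{O}_{\widehat{S}}=\mathcal{O}_{\widehat{T}}\oplus\mathcal{O}_{\widehat{T}}(-\widehat{\delta})$ and $K_{\widehat{S}}=\pi^{*}(K_{\widehat{T}}+\widehat{\delta})$, together with $\pi^{*}A_{i}=2E_{i}$ and $\pi^{*}\pi(R)=2\epsilon^{*}R$ (because $A$ and $\pi(R)$ lie in the branch locus). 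From $\pi^{*}(\tfrac12 A)=\sum E_{i}$ I obtain the key relation $\pi^{*}(K_{\widehat{T}}+\overline{\delta})=\pi^{*}(K_{\widehat{T}}+\widehat{\delta})-\sum E_{i}=K_{\widehat{S}}-\sum E_{i}=\epsilon^{*}K_{S}$. Since $S$ is minimal of general type, $K_{S}$ is nef, hence so is $\epsilon^{*}K_{S}$; as $\pi$ is finite and surjective a divisor on $\widehat{T}$ is nef if and only if its pullback is, so $K_{\widehat{T}}+\overline{\delta}$ is nef, and it is big because $(K_{\widehat{T}}+\overline{\delta})^{2}=\tfrac12 K_{S}^{2}>0$. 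This proves (1).

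For (2), (4) and (5) I would run pure bookkeeping. Blowing up gives $K_{\widehat{S}}=\epsilon^{*}K_{S}+\sum E_{i}$, whence $K_{\widehat{S}}^{2}=K_{S}^{2}-\tau$, and the degree-two relation yields $K_{\widehat{S}}^{2}=\pi^{*}(K_{\widehat{T}}+\widehat{\delta})^{2}=2(K_{\widehat{T}}+\widehat{\delta})^{2}$; combining these with $\overline{\delta}\cdot A=0$, $K_{\widehat{T}}\cdot A_{i}=0$ and $A^{2}=-2\tau$ (the $A_{i}$ being disjoint $(-2)$-curves, and $R$ avoiding the $p_{i}$ so that $\pi(R)$ avoids $A$) gives $K_{S}^{2}=2(K_{\widehat{T}}+\overline{\delta})^{2}$, which is (2). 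For (4) I use $\chi(\mathcal{O}_{S})=\chi(\mathcal{O}_{\widehat{S}})=\chi(\pi_{*}\mathcal{O}_{\widehat{S}})=\chi(\mathcal{O}_{\widehat{T}})+\chi(\mathcal{O}_{\widehat{T}}(-\widehat{\delta}))$ together with Riemann--Roch $\chi(\mathcal{O}_{\widehat{T}}(-\widehat{\delta}))=\chi(\mathcal{O}_{\widehat{T}})+\tfrac12\widehat{\delta}(K_{\widehat{T}}+\widehat{\delta})$. For (5) I compute $(K_{\widehat{T}}+\widehat{\delta})\widehat{\delta}=\tfrac12(K_{S}R-\tau)$ by pulling back, namely $2(K_{\widehat{T}}+\widehat{\delta})\widehat{\delta}=K_{\widehat{S}}\cdot(\epsilon^{*}R+\sum E_{i})=K_{S}R-\tau$, and substitute into (4).

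The one genuinely non-formal step is the vanishing in (3), and this is where I expect the main difficulty. The divisor $2K_{\widehat{T}}+\widehat{\delta}$ cannot be treated by the naive form of the vanishing theorem, since $K_{\widehat{T}}+\widehat{\delta}$ is not nef: it pulls back to $K_{\widehat{S}}$, which is negative on each exceptional curve $E_{i}$. The right move is to write $2K_{\widehat{T}}+\widehat{\delta}=K_{\widehat{T}}+(K_{\widehat{T}}+\overline{\delta})+\tfrac12 A$ and invoke Kawamata--Viehweg vanishing in its $\mathbb{Q}$-divisor form, with nef and big part $K_{\widehat{T}}+\overline{\delta}$ (from (1)) and fractional boundary $\tfrac12 A$, whose support is a disjoint union of smooth rational curves, hence simple normal crossings with coefficients in $[0,1)$. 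This gives $h^{i}(2K_{\widehat{T}}+\widehat{\delta})=0$ for $i>0$; the care needed is precisely to keep the nef and big part and the fractional part separate, rather than rounding up $A$ prematurely. The stated value of $h^{0}$ is then Riemann--Roch: $h^{0}(2K_{\widehat{T}}+\widehat{\delta})=\chi(2K_{\widehat{T}}+\widehat{\delta})=\chi(\mathcal{O}_{\widehat{T}})+\tfrac12(2K_{\widehat{T}}+\widehat{\delta})(K_{\widehat{T}}+\widehat{\delta})$.

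Finally for (6), the inequality $\tau\geq 0$ is immediate, and the displayed identity follows by eliminating the intersection numbers. Explicitly, I would rewrite $(2K_{\widehat{T}}+\widehat{\delta})(K_{\widehat{T}}+\widehat{\delta})=2(K_{\widehat{T}}+\widehat{\delta})^{2}-\widehat{\delta}(K_{\widehat{T}}+\widehat{\delta})$, insert $(K_{\widehat{T}}+\widehat{\delta})^{2}=\tfrac12(K_{S}^{2}-\tau)$ from (2) and $\widehat{\delta}(K_{\widehat{T}}+\widehat{\delta})=\tfrac12(K_{S}R-\tau)$ from the proof of (5), replace $K_{S}R$ using (5), and substitute into the Riemann--Roch expression in (3). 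Solving for $\tau$ returns $\tau=K_{S}^{2}+6\chi(\mathcal{O}_{\widehat{T}})-2\chi(\mathcal{O}_{S})-2h^{0}(2K_{\widehat{T}}+\widehat{\delta})$, as claimed.
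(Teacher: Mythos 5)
Your proposal is correct: every step checks out, and the key identities $\pi^{*}(K_{\widehat{T}}+\overline{\delta})=\epsilon^{*}K_{S}$, $\pi^{*}\widehat{\delta}=\epsilon^{*}R+\sum E_{i}$, $(K_{\widehat{T}}+\widehat{\delta})\widehat{\delta}=\tfrac12(K_{S}R-\tau)$ and the final elimination giving 6) are all computed correctly. The difference from the paper is that the paper offers no argument at all: its proof of this lemma is a pure citation, attributing 1)--3) to Proposition 3.1 of Calabri--Ciliberto--Mendes Lopes, 4)--5) to Lemma 1.1 of Bauer--Pignatelli, and deducing 6) formally from 2), 3) and 4) (the same elimination you perform, except that you route it through the intermediate identity in your proof of 5) rather than through 4) directly -- an equivalent bookkeeping choice). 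So your contribution is to supply the content hidden in those references: the double-cover and blow-up formalism for 1), 2), 4), 5), and, crucially, the vanishing in 3). Your diagnosis of where the real difficulty sits is exactly right -- $K_{\widehat{T}}+\widehat{\delta}$ is not nef, since it meets each $A_{i}$ in $-1$, so the naive Kawamata--Viehweg statement fails -- and your fix, writing $K_{\widehat{T}}+\widehat{\delta}\equiv(K_{\widehat{T}}+\overline{\delta})+\tfrac12 A$ with nef and big part $K_{\widehat{T}}+\overline{\delta}$ and fractional SNC boundary $\tfrac12 A$ (a disjoint union of smooth rational curves), is the standard way this vanishing is established in the cited literature. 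What the paper's approach buys is brevity; what yours buys is a self-contained lemma and a clear identification of the one non-formal ingredient. One minor point: the bigness in 1) and the nefness of $K_{S}$ use that $S$ is minimal of general type (so $K_{S}$ is nef and $K_{S}^{2}>0$); the section's setup says only ``minimal surface,'' so you should state this hypothesis explicitly, as it is what the cited references and the rest of the paper assume.
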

\begin{proof}
1), 2) and 3) are from \cite{Calabri} Proposition 3.1; 4) and 5) are from \cite{Bauer 2} Lemma 1.1; 6) is an immediate consequence of 2), 3) and 4).
\end{proof}

A canonical involution on $S$ is an involution which factors through the canonical map. With the help of the following lemma, we only consider the surface having a canonical involution in this paper.

\begin{lemma}\label{canonical involution}
With the notation as above, if $i$ is a canonical involution on $S$, then either $p_{g}(\widehat{T})=0$ or $p_{g}(\widehat{T})=p_{g}(S)$. In the former case every isolated fixed point of $i$ is a base point of $|K_{S}|$, and in the later case $R$ is contained in the fixed part of $|K_{S}|$.
\end{lemma}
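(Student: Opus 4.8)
The plan is to analyse the action of the involution on the space of holomorphic two-forms and then translate it into the geometry of the canonical system. Since $i$ is an involution, the induced linear map $i^{*}$ on $H^{0}(S,K_{S})$ satisfies $(i^{*})^{2}=\mathrm{id}$, so it is diagonalizable and we may write
$$H^{0}(S,K_{S})=V_{+}\oplus V_{-},$$
where $V_{\pm}$ are the $(\pm1)$-eigenspaces. First I would show that the hypothesis that the canonical map $\phi$ factors through $i$ forces $i^{*}$ to be a scalar. The map $\phi$ is equivariant: it intertwines $i$ on $S$ with the projective transformation $\overline{i}$ of $\mathbb{P}(H^{0}(S,K_{S})^{\vee})$ induced by $i^{*}$, that is $\phi\circ i=\overline{i}\circ\phi$. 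The assumption $\phi\circ i=\phi$ then says that $\overline{i}$ fixes every point of the canonical image. As a basis of $H^{0}(S,K_{S})$ gives the projective coordinates, this image is nondegenerate, so $\overline{i}$ fixes a spanning set of points and must be the identity. Hence $i^{*}$ acts as a scalar, and being an involution it equals $+\mathrm{id}$ or $-\mathrm{id}$; equivalently exactly one of $V_{+}$, $V_{-}$ vanishes. (Concretely, if nonzero $s\in V_{+}$ and $t\in V_{-}$ both existed, then at a general point $\phi$ and $\phi\circ i$ would give projective points differing by a sign on the $t$-coordinate but not on the $s$-coordinate, which is impossible.)

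Next I would match the two eigenspaces with the two alternatives. Pulling back two-forms along $\epsilon$ gives an isomorphism $H^{0}(S,K_{S})\cong H^{0}(\widehat{S},K_{\widehat{S}})$ intertwining $i^{*}$ and $\widehat{i}^{*}$, since $\epsilon\circ\widehat{i}=i\circ\epsilon$. For the smooth double cover $\pi\colon\widehat{S}\to\widehat{T}$, a local computation in coordinates $(u,w)$ with $w^{2}=v$ and $\widehat{i}\colon w\mapsto-w$ shows that an invariant two-form is precisely the pullback of a regular two-form on $\widehat{T}$; hence the invariant subspace is $\pi^{*}H^{0}(\widehat{T},K_{\widehat{T}})$ and $\dim V_{+}=p_{g}(\widehat{T})$. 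Combining with the previous paragraph, $i^{*}=-\mathrm{id}$ corresponds to $V_{+}=0$, i.e. $p_{g}(\widehat{T})=0$, while $i^{*}=+\mathrm{id}$ corresponds to $V_{-}=0$, i.e. $p_{g}(\widehat{T})=p_{g}(S)$, giving the stated dichotomy.

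Finally I would read off the base-locus statements from local normal forms, using that $i^{*}\omega=\pm\omega$ for every $\omega\in H^{0}(S,K_{S})$. At an isolated fixed point $p$ the involution linearizes as $(x,y)\mapsto(-x,-y)$, so $i^{*}(dx\wedge dy)=dx\wedge dy$; writing $\omega=f\,dx\wedge dy$, the condition $i^{*}\omega=-\omega$ (the case $p_{g}(\widehat{T})=0$) forces $f(-x,-y)=-f(x,y)$, hence $f(p)=0$, so $p$ is a base point of $|K_{S}|$. Along the fixed curve one has coordinates with $R=\{x=0\}$ and $i\colon(x,y)\mapsto(-x,y)$, so $i^{*}(dx\wedge dy)=-dx\wedge dy$; now $i^{*}\omega=\omega$ (the case $p_{g}(\widehat{T})=p_{g}(S)$) forces $f(-x,y)=-f(x,y)$, so $f$ vanishes on $R$ and $R$ lies in the fixed part of $|K_{S}|$.

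I expect the only genuinely delicate point to be the scalar argument of the first paragraph: one must verify that "the canonical map factors through $i$" is exactly the statement that $\overline{i}$ acts trivially on the canonical image, and that nondegeneracy of this image upgrades pointwise fixing to $\overline{i}=\mathrm{id}$. The identification $\dim V_{+}=p_{g}(\widehat{T})$ needs only correct sign bookkeeping in the local model of the double cover (a naive projection-formula computation easily reverses the eigenvalue), and the concluding normal-form computations are then routine.
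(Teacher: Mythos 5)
Your proof is correct, and it is essentially the argument behind the paper's proof, which consists only of the citation to \cite{Bauer 2}, Lemma 1.3: there too the point is the eigenspace decomposition of $H^{0}(S,K_{S})$ under $i^{*}$, the fact that factoring through $i$ forces one eigenspace to vanish, the identification of the invariant subspace with $\pi^{*}H^{0}(\widehat{T},K_{\widehat{T}})$, and the local normal forms $(x,y)\mapsto(-x,-y)$ at isolated fixed points and $(x,y)\mapsto(-x,y)$ along $R$. One small caution: the clause ``$\overline{i}$ fixes a spanning set of points and must be the identity'' is not a valid inference by itself (a diagonal involution fixes the spanning set of coordinate points), so the real justification is either the irreducibility of the canonical image, which confines it to a single eigenspace $\mathbb{P}(V_{\pm})$ and then nondegeneracy forces that eigenspace to be everything, or equivalently your parenthetical general-point argument, which does close this gap.
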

\begin{proof}
See \cite{Bauer 2} Lemma 1.3.
\end{proof}

\bigskip

To finish the classification, we recall the contraction theorem using the notation in \cite{Kollar}.

\begin{lemma}\label{Contraction Theorem}
\textbf{(Contraction Theorem)} Let $P$ be a smooth surface. For each extremal ray $l$ in the half space $\overline{NE(P)}_{K_{P}<0}$, there exists the associated extremal contraction $\varphi_{l}:P\rightarrow W$. Moreover, $\varphi_{l}$ is one of the following type
\begin{enumerate}[1)]
\item W is a smooth surface and $P$ is obtained from W by blowing-up a point; $\rho(W)=\rho(P)-1$.
\item W is a smooth curve and $P$ is a minimal ruled surface over W; $\rho(P)=2$.
\item W is a point, $\rho(P)=1$ and $-K_{P}$ is ample; In fact $P\cong \mathds{P}^{2}$.
\end{enumerate}
\end{lemma}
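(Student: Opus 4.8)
This is the contraction theorem for $K_P$-negative extremal rays on a smooth surface, which is a special case of the general minimal model program machinery, so the plan is to deduce it from Mori's Cone Theorem together with the base-point-free (contraction) theorem and then carry out the surface-level classification of the resulting morphism according to the dimension of its image. First I would invoke the Cone Theorem to record that $\overline{NE(P)}_{K_{P}<0}$ is locally rational polyhedral and that the chosen extremal ray $l$ is generated by the class of a rational curve $C$ with $0<-K_{P}\cdot C\leq \dim P+1=3$. Next, choosing a nef supporting divisor $H$ with $H^{\perp}\cap\overline{NE(P)}=l$ and applying the base-point-free theorem, I obtain a morphism $\varphi_{l}:P\rightarrow W$ with connected fibers onto a normal projective variety $W$, characterized by the property that an irreducible curve is contracted exactly when its class lies in $l$; in particular the relative Picard number is $1$, that is, $\rho(P)=\rho(W)+1$. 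It then remains to analyze the three cases $\dim W=2,1,0$.

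If $\dim W=2$ then $\varphi_{l}$ is birational and, since it is divisorial of relative Picard number $1$, its exceptional locus is a single irreducible curve $C$ with $[C]\in l$, so $K_{P}\cdot C<0$ and $C^{2}<0$. Adjunction gives $2p_{a}(C)-2=K_{P}\cdot C+C^{2}$, and since both summands are negative integers the only possibility is $K_{P}\cdot C=C^{2}=-1$, so $C$ is a $(-1)$-curve; by Castelnuovo's criterion $W$ is smooth and $\varphi_{l}$ is the blow-up of a point, whence $\rho(W)=\rho(P)-1$. If $\dim W=1$ then $W$ is a curve with $\rho(W)=1$, so $\rho(P)=2$; a general fiber $F$ is smooth with $F^{2}=0$, and adjunction gives $K_{P}\cdot F=2g(F)-2$, so $-K_{P}\cdot F>0$ forces $g(F)=0$ and $K_{P}\cdot F=-2$. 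Because the relative Picard number is $1$ every fiber is irreducible, and one checks there are no multiple fibers, so each fiber is a reduced $\mathds{P}^{1}$; thus $\varphi_{l}$ is a $\mathds{P}^{1}$-bundle and $P$ is a minimal ruled surface over $W$. Finally, if $\dim W=0$ then $\overline{NE(P)}=l$ is a single ray, $\rho(P)=1$, and $-K_{P}$ is ample, leaving the identification $P\cong\mathds{P}^{2}$ to be established.

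The step I expect to be the main obstacle is this last one, namely proving $P\cong\mathds{P}^{2}$ in case 3): unlike the other two cases, it is not settled by a one-line adjunction computation. Here I would use that a smooth projective surface with $-K_{P}$ ample is del Pezzo, that $\rho(P)=1$ pins down $K_{P}^{2}=9$, and then exhibit a curve $C$ generating $l$ with $K_{P}\cdot C=-3$ and $C^{2}=1$, showing that $|C|$ realizes the isomorphism onto $\mathds{P}^{2}$. The remaining delicate points are the smoothness of $W$ in cases 1) and 2), which I would handle via Castelnuovo's contractibility criterion and the absence of multiple fibers forced by the relative Picard number being $1$; the Picard-number bookkeeping $\rho(W)=\rho(P)-1$ and $\rho(P)=2$ then follows formally from $\rho(P)=\rho(W)+1$.
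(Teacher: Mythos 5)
The paper does not actually prove this lemma: its entire ``proof'' is the citation ``See \cite{Kollar} theorem 3.7'', since this is the standard contraction theorem for $K$-negative extremal rays on a smooth surface. Your proposal therefore takes a genuinely different route in the only sense available -- you re-derive the cited result rather than quote it -- and the derivation you outline is the standard textbook one: the cone theorem produces a rational generator $C$ of $l$ with $0<-K_{P}\cdot C\leq 3$, the base-point-free theorem produces the contraction $\varphi_{l}$ of relative Picard number one, and the three cases are separated by $\dim W$. Your cases 1) and 2) are sound: the adjunction bookkeeping forcing $K_{P}\cdot C=C^{2}=-1$ (so that Castelnuovo's criterion applies), and the genus-zero, no-multiple-fiber analysis giving a $\mathds{P}^{1}$-bundle, are both correct; you should add the one-line reason why the exceptional locus in case 1) is a single irreducible curve (two distinct contracted curves would be numerically proportional with negative self-intersection, contradicting $C_{1}\cdot C_{2}\geq 0$). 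What your approach buys is a self-contained argument from general MMP tools; what the paper's citation buys is brevity and the avoidance of the one genuinely delicate point, which you correctly isolated.

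That delicate point, case 3), is where your sketch has a real weakness: the assertion that ``$\rho(P)=1$ pins down $K_{P}^{2}=9$'' is not available for free -- for a del Pezzo surface this implication is essentially equivalent to the conclusion $P\cong\mathds{P}^{2}$ itself, so as written the plan is circular. There are two honest ways to close it: (a) invoke the classical classification of del Pezzo surfaces (each is $\mathds{P}^{1}\times\mathds{P}^{1}$ or a blow-up of $\mathds{P}^{2}$ in at most $8$ points), after which $\rho(P)=1$ immediately gives $P\cong\mathds{P}^{2}$ and your proposed construction of a curve with $C^{2}=1$ becomes superfluous; or (b) run Mori's original argument, using bend-and-break and the deformation theory of a rational curve of minimal anticanonical degree to produce a line whose linear system embeds $P$ as $\mathds{P}^{2}$. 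With either substitution the proof is complete and coincides with the argument in the source the paper cites.
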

\begin{proof}
See \cite{Kollar} theorem 3.7.
\end{proof}

\begin{lemma}\label{extremal ray}
Let $P$ be a smooth surface and $\delta \in Pic(P)\otimes \mathbb{Q}$. If
$$r=max\{t\in \mathbb{R}|\delta+tK_{P}\text{ is nef }\}<+\infty,$$
then there exists $l\in \overline{NE(P)} \verb|\| \{ 0\}$ satisfying
\begin{enumerate}[1)]
\item $(rK_{P}+\delta)l=0$;
\item $K_{P}\cdot l<0$;
\item $\mathbb{R}_{\geq0}[l]$ is an extremal ray. (We don't distinguish $\mathbb{R}_{\geq0}[l]$ from $l$ below.)
\end{enumerate}
\end{lemma}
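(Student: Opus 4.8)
The plan is to place $D_r:=\delta+rK_P$ on the boundary of the nef cone and then peel off an extremal ray from the face it cuts out. First I would record two soft facts. Since the nef cone is closed and convex and $t\mapsto\delta+tK_P$ is affine, the set $\{t:\delta+tK_P\text{ is nef}\}$ is a closed interval; as its maximum $r$ is finite, $D_r$ is nef. Moreover $D_r$ cannot be ample: ampleness is an open condition in $N^1(P)_{\mathbb{R}}$, so if $D_r$ were ample then $\delta+(r+\epsilon)K_P=D_r+\epsilon K_P$ would be ample, hence nef, for all small $\epsilon>0$, contradicting the maximality of $r$. By Kleiman's criterion a nef class is ample iff it is strictly positive on $\overline{NE(P)}\setminus\{0\}$, so since $D_r$ is nef but not ample the face
$$F:=D_r^{\perp}\cap\overline{NE(P)}=\{z\in\overline{NE(P)}:(rK_P+\delta)\,z=0\}$$
is nonzero. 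Any nonzero $z\in F$ already satisfies 1), so the entire task is to arrange that $z$ can be taken to be an extremal ray lying in the half-space $\{K_P<0\}$.

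The second step converts maximality into a supply of $K_P$-negative extremal rays. For every $\epsilon>0$ the class $D_r+\epsilon K_P$ fails to be nef; since $\overline{NE(P)}$ is the closed convex cone generated by its extremal rays, a non-nef class must be strictly negative on some extremal ray, so there is an extremal ray $l_\epsilon$ with $(D_r+\epsilon K_P)\,l_\epsilon<0$. As $D_r$ is nef we have $D_r\cdot l_\epsilon\ge0$, whence $\epsilon\,K_P\cdot l_\epsilon<-D_r\cdot l_\epsilon\le0$ forces $K_P\cdot l_\epsilon<0$ and $0\le D_r\cdot l_\epsilon<\epsilon(-K_P\cdot l_\epsilon)$. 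Thus each $l_\epsilon$ is a $K_P$-negative extremal ray on which $D_r$ is small relative to $-K_P$.

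The third step extracts a single ray. Here I would invoke the finiteness built into the Cone/Contraction theorem (Lemma \ref{Contraction Theorem}): after normalising classes on the compact slice $\{H\cdot z=1\}$ for a fixed ample $H$, the $K_P$-negative extremal rays are locally finite away from the wall $\{K_P=0\}$. If the rays $l_\epsilon$ stay in a region bounded away from that wall, they occupy only finitely many numerical classes, so one extremal ray $l$ recurs along a sequence $\epsilon\to0$; passing to the limit in $0\le D_r\cdot l<\epsilon(-K_P\cdot l)$ gives $D_r\cdot l=0$, while $K_P\cdot l<0$ persists. This $l$ satisfies 1), 2), 3), and is precisely the ray to be fed into the contraction of Lemma \ref{Contraction Theorem}. (Alternatively, once a class of $F$ is known to have $K_P<0$, one notes that $K_P\cdot(-)$ would otherwise be $\ge0$ on every extremal generator of the pointed closed cone $F$, hence on all of $F$; so some extremal ray of $F$—automatically an extremal ray of $\overline{NE(P)}$—must be $K_P$-negative.)

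The crux, and the step I expect to be the genuine obstacle, is controlling the behaviour of the witnesses $l_\epsilon$ as $\epsilon\to0$: a priori they could drift toward the wall $\{K_P=0\}$, where $-K_P\cdot l_\epsilon\to0$ and the estimate $0\le D_r\cdot l_\epsilon<\epsilon(-K_P\cdot l_\epsilon)$ degenerates, so that no honest $K_P$-negative extremal ray need survive in the limit. Ruling out this escape is exactly where the local finiteness of $K_P$-negative extremal rays (Lemma \ref{Contraction Theorem}) together with the pointedness of $\overline{NE(P)}$ and the finiteness $r<+\infty$ must be used; everything else is the soft cone/duality bookkeeping above, and I would expect the write-up to spend most of its effort making this confinement precise.
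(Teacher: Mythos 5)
Your reduction is sound as far as it goes, but the proposal does not prove the lemma, and the step you leave open is not a technicality: it is the entire content of the statement. Both of your routes hinge on the same unestablished claim, namely that the face $F=(rK_P+\delta)^{\perp}\cap\overline{NE(P)}$ contains a class with $K_P<0$ (equivalently, that the witnesses $l_\epsilon$ do not drift to the wall $K_P^{\perp}$). Your main route cannot be closed by ``local finiteness'' as invoked: the Cone Theorem gives discreteness of $K_P$-negative extremal rays only in regions $\{(K_P+\epsilon' H)\cdot z<0\}$, i.e.\ away from the wall, and on a surface such as the blow-up of $\mathds{P}^{2}$ in nine or more general points there genuinely are infinitely many $K_P$-negative extremal rays (the $(-1)$-curves) accumulating on $K_P^{\perp}$. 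So a priori the $l_\epsilon$ run through infinitely many classes, their normalized limits land on the wall, where a limit need be neither extremal nor $K_P$-negative, and nothing survives. Your parenthetical alternative is correct convex geometry (an extremal ray of the exposed face $F$ is extremal in $\overline{NE(P)}$, and a pointed closed cone is generated by its extremal rays), but it is explicitly conditional on the same missing claim.

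What actually closes the gap is a quantitative input you never use: by the Cone Theorem each $K_P$-negative extremal ray is spanned by a rational curve $C$ with $0<-K_P\cdot C\le 3$, and $\delta$ is a $\mathbb{Q}$-class, say $m\delta$ integral. Writing $(rK_P+\delta)\cdot C=\delta\cdot C+rK_P\cdot C$ with $\delta\cdot C\in\frac{1}{m}\mathbb{Z}$ and $K_P\cdot C\in\{-1,-2,-3\}$, the possible values of $(rK_P+\delta)\cdot C$ lying in $[0,3]$ form a finite set, hence the nonzero ones have a positive minimum $d$; your estimate $0\le(rK_P+\delta)\cdot l_\epsilon<\epsilon(-K_P\cdot l_\epsilon)\le 3\epsilon$ then forces $(rK_P+\delta)\cdot l_\epsilon=0$ once $\epsilon<d/3$, and that single $l_\epsilon$ is the desired ray. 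For comparison, the paper argues differently at the final step: it asserts the existence of $l'\in\overline{NE(P)}\setminus\{0\}$ with $(rK_P+\delta)l'=0$ and $K_P\cdot l'<0$, and then, instead of any limiting or recurrence argument, applies the Cone Theorem decomposition $l'=l''+\sum a_nR_n$ (with $K_Pl''\ge0$, $R_n$ extremal, $a_n\ge0$): since $K_P\cdot l'<0$ some $a_i>0$, and nefness of $rK_P+\delta$ together with $(rK_P+\delta)l'=0$ forces $(rK_P+\delta)R_i=0$, so $l=R_i$ works. It is fair to say that the paper's justification of the existence of $l'$ is only the informal sweeping argument you also sketched, so your diagnosis of where the real difficulty sits is accurate; but identifying the obstacle is not the same as overcoming it, and your write-up, unlike the discreteness argument above, never does.
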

\begin{proof}
Let $M_{r}\doteq \{l|(r K_{S}+\delta)l=0\}$ and $NE(S)^{0}$ be the interior of $\overline{NE(S)}$. With the increase of $r$, the intersection of $M_{r}$ and $NE(S)^{0}$ will become nonempty from an empty set. If $r$ is the maximal number such that $rK_{S}+\delta$ is nef, then there exists $l' \in \overline{NE(S)} \verb|\| \{ 0\}$ with $(rK_{S}+\delta)l'=0$ and $K_{S}\cdot l'<0$. By cone theorem, we have $l'=l''+\Sigma a_{n}R_{n}$ with $K_{S}l''\geq 0$ and $K_{S}R_{n}<0$ and $a_{n}\geq0$ for any $n$. Since $K_{S}\cdot l'<0$, there exists an integer $i$ such that $a_{i}\neq0$. Let $l=R_{i}$. Since $(rK_{S}+\delta)l'=0$ and $rK_{S}+\delta$ is nef, $l$ satisfies the three conditions.
\end{proof}

\bigskip

To prove the existence of each family, we only need to show that the smooth branch curve $2\widehat{\delta}$ on $\widehat{T}$ is really existent. By Bertini's theorem, it is enough to prove the base point free of $|2\widehat{\delta}|$.

\begin{lemma}\label{base point free}
Let $P$ be a smooth surface and $L$ is a nef divisor on $P$. If we assume $L^{2}\geq 5$ and $L \equiv (2a+1)K_{P}+2bD$ where $a,b\in \mathbb{Z}$ and $D \in \mathrm{Pic}(P)$, then $|K_{P}+L|$ base point free.
\end{lemma}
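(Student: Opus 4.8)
The plan is to deduce the statement from Reider's theorem and then kill both of Reider's alternatives by a parity argument built on the very special shape $L \equiv (2a+1)K_{P}+2bD$.

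First I would argue by contradiction. Suppose $x\in P$ is a base point of $|K_{P}+L|$. Since $L$ is nef and $L^{2}\geq 5$, the base-point-free version of Reider's theorem applies and produces an effective divisor $E$ through $x$ satisfying one of the two numerical alternatives: either $L\cdot E=0$ and $E^{2}=-1$, or $L\cdot E=1$ and $E^{2}=0$. The whole argument now reduces to showing that neither alternative is compatible with the hypothesis on $L$.

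The key point is a parity observation coming from adjunction. For any divisor $E$ on the smooth surface $P$ one has $E^{2}+K_{P}\cdot E=2p_{a}(E)-2$, and since $p_{a}(E)\in\mathbb{Z}$ the quantity $E^{2}+K_{P}\cdot E$ is always even. Hence in the first alternative ($E^{2}=-1$) the integer $K_{P}\cdot E$ is odd, while in the second ($E^{2}=0$) it is even. Next I would expand $L\cdot E$ using $L\equiv (2a+1)K_{P}+2bD$, namely $L\cdot E=(2a+1)(K_{P}\cdot E)+2b(D\cdot E)$, where all three intersection numbers are integers because $E$, $K_{P}$ and $D$ are integral classes. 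In the first alternative the right-hand side is $(\text{odd})\cdot(\text{odd})+(\text{even})$, hence odd, contradicting $L\cdot E=0$; in the second alternative it is $(\text{odd})\cdot(\text{even})+(\text{even})$, hence even, contradicting $L\cdot E=1$. Either way we reach a contradiction, so $|K_{P}+L|$ can have no base point.

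The substance of the argument is entirely carried by Reider's theorem, for which the hypothesis $L^{2}\geq 5$ is exactly the required numerical input; the factorization $(2a+1)K_{P}+2bD$ is precisely what makes both of Reider's numerical cases impossible. The only delicate point, and the one I would be careful to phrase correctly, is that Reider's divisor $E$ need not be irreducible or reduced, so one cannot assume $p_{a}(E)\geq 0$; the parity step only uses that $p_{a}(E)$ is an integer, which holds for every effective divisor, and this is enough for the contradiction.
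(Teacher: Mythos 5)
Your proof is correct and is exactly the argument the paper has in mind: the paper's proof is simply the citation ``immediate consequence of Reider's method (\cite{BPV} IV 11.4)'', and the standard way to unpack that citation is precisely your combination of Reider's two numerical alternatives with the parity of $E^{2}+K_{P}\cdot E$ forced by the shape $(2a+1)K_{P}+2bD$. Your closing remark that the parity step needs only $p_{a}(E)\in\mathbb{Z}$ (not irreducibility of $E$) is the right point of care and is valid for every effective divisor on a smooth surface.
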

\begin{proof}
It is an immediate consequence of the Reider's method (See \cite{BPV} IV 11.4).
\end{proof}

\begin{lemma}\label{base point free 1}
Let $P$ be a smooth surface, $E$ a (-1)-curve on $P$, $|\Omega|$ base point free on $P$ and $\Omega E>0$. If we assume that $\Omega\equiv K_{P}+L+D$, where $D$ is a normal crossing divisor and $L$ is big and nef, then $|\Omega+E|$ is also base point free.
\end{lemma}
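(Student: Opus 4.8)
The plan is to analyze the base locus of $|\Omega + E|$ through the restriction exact sequence to $E$, treating points off $E$ and points on $E$ separately. Concretely, I would start from the short exact sequence
$$0 \to \mathcal{O}_{P}(\Omega) \to \mathcal{O}_{P}(\Omega + E) \to \mathcal{O}_{E}(\Omega + E) \to 0,$$
in which the first map is multiplication by the canonical section $e$ cutting out $E$. Since $E$ is a $(-1)$-curve it is isomorphic to $\mathds{P}^{1}$ with $E^{2}=-1$, so $\deg_{E}\mathcal{O}_{E}(\Omega + E)=(\Omega + E)\cdot E=\Omega E-1\geq 0$, using that $\Omega E>0$ and that intersection numbers are integers. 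Hence $\mathcal{O}_{E}(\Omega + E)$ is a line bundle of nonnegative degree on $\mathds{P}^{1}$, and is therefore globally generated.

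The key cohomological input is the vanishing $H^{1}(P,\Omega)=0$, and this is precisely where the hypotheses on $\Omega$ are used. Writing $\Omega\equiv K_{P}+L+D$ with $L$ big and nef and $D$ a reduced normal crossing divisor, the logarithmic Kawamata--Viehweg vanishing theorem yields $H^{i}(P,K_{P}+D+L)=0$ for all $i>0$; in particular $H^{1}(P,\Omega)=0$. The long exact cohomology sequence attached to the sequence above then shows that the restriction map $H^{0}(P,\Omega + E)\to H^{0}(E,\mathcal{O}_{E}(\Omega + E))$ is surjective.

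It then remains to verify base point freeness point by point. For a point $x\notin E$, base point freeness of $|\Omega|$ gives a section $s\in H^{0}(P,\Omega)$ with $s(x)\neq 0$; its image $s\otimes e\in H^{0}(P,\Omega + E)$ is nonvanishing at $x$ because $e(x)\neq 0$ away from $E$, so $x$ is not a base point. For a point $x\in E$, global generation of $\mathcal{O}_{E}(\Omega + E)$ produces a section of $\mathcal{O}_{E}(\Omega + E)$ not vanishing at $x$, which by the surjectivity just established lifts to a section of $\Omega + E$ nonvanishing at $x$. Every point is thus covered, and $|\Omega + E|$ is base point free.

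The main technical obstacle is the vanishing $H^{1}(P,\Omega)=0$: this is exactly the reason the lemma requires $D$ to be a normal crossing divisor rather than an arbitrary effective divisor, since for a general effective $D$ the class $L+D$ need not be nef and ordinary Kawamata--Viehweg vanishing would not apply. Once the logarithmic vanishing is invoked, the remaining argument is the standard separation of cases along the $(-1)$-curve and is routine.
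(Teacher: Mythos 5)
Your overall strategy --- restricting to $E$ via $0 \to \mathcal{O}_{P}(\Omega) \to \mathcal{O}_{P}(\Omega+E) \to \mathcal{O}_{E}(\Omega+E) \to 0$, deducing surjectivity of the restriction map from $H^{1}(P,\Omega)=0$, using global generation of the degree $\Omega E-1\geq 0$ bundle on $E\cong \mathds{P}^{1}$ for points on $E$, and base point freeness of $|\Omega|$ for points off $E$ --- is essentially the paper's proof written out in full: the paper compresses the cohomology into ``vanishing theorem and Riemann--Roch'' and disposes of points on $E$ with the one-line assertion that such a base point would be a base point of $|\Omega|$; your treatment of that last step is in fact more complete than the paper's.

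There is, however, a genuine gap at the step you yourself identify as the crux. The vanishing you invoke --- $H^{i}(P,K_{P}+L+D)=0$ for $i>0$ whenever $L$ is big and nef and $D$ is a reduced normal crossing divisor --- is false. Norimatsu's vanishing requires $L$ \emph{ample}; for merely big and nef $L$ one needs extra positivity of $L$ along the components of $D$. Concretely, let $P$ be the blow-up of $\mathds{P}^{2}$ at two infinitely near points, $D$ the resulting $(-2)$-curve and $L$ the pullback of a line: then $L$ is big and nef, $D$ is smooth rational with $L\cdot D=K_{P}\cdot D=0$, so $(K_{P}+L+D)|_{D}\cong \mathcal{O}_{\mathds{P}^{1}}(-2)$, and the long exact sequence of $0\to K_{P}+L\to K_{P}+L+D\to (K_{P}+L+D)|_{D}\to 0$ together with $H^{1}(K_{P}+L)=H^{2}(K_{P}+L)=0$ (Kawamata--Viehweg) gives $h^{1}(K_{P}+L+D)=1$. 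The log form of Kawamata--Viehweg does not rescue this: applying it to $N=L+(1-\epsilon)D$ requires $N$ nef, which fails exactly when $L$ has degree zero on a negative component of $D$; so, contrary to your closing remark, normal crossings is not the hypothesis that makes the vanishing work --- the issue is positivity, not the singularities of $D$. To close the gap you must either strengthen the hypothesis to $L$ ample, or verify $L\cdot D_{i}>0$ for every component $D_{i}$ of $D$ (this holds in all of the paper's applications of the lemma, and then $H^{1}(K_{P}+L+D)=0$ follows by induction on the components using the same restriction sequences), or extract the vanishing from the base point freeness of $|\Omega|$, which your argument does not do. The paper's own proof hides the identical difficulty behind the unspecified ``vanishing theorem'', so the defect is inherited rather than introduced, but as written your key step rests on a false statement.
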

\begin{proof}
Since $\Omega E>0$ and $\Omega\equiv K_{P}+L+D$, E is not a base part of $|\Omega+E|$ by vanishing theorem and Riemann-Roch theorem. So $H^{0}(P,\Omega)$ is a proper subspace of $H^{0}(P,\Omega+E)$. For any point $p\in E$, if $p$ is a base point of $|\Omega+E|$, then it must be a base point of $|\Omega|$.
\end{proof}

If $W$ is a smooth surface with a big and nef divisor $-K_{W}$, then we call $W$ a weak del Pezzo surface. For example, $\mathds{P}^{2}$ and the Hirzebruch surfaces $\mathds{F}_{n}$ are weak del Pezzo surfaces if $0\leq n\leq 2$.

\begin{lemma}\label{nef 1}
Let $W$ be a weak del Pezzo surface and $\alpha : W'\rightarrow W$ be the blowing-up with center at a point $p$ not lying on any (-2)-curve.
\begin{enumerate}[1)]
\item If $K_{W'}^{2}\geq 1$, then $W'$ is also a weak del Pezzo surface. In particular, $-K_{W'}$ is nef.
\item If $K_{W'}^{2}= 0$, then $-K_{W'}$ is also nef.
\end{enumerate}
\end{lemma}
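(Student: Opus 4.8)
The plan is to reduce the whole statement to the nefness of $-K_{W'}$. Writing $E$ for the exceptional $(-1)$-curve of $\alpha$, we have $K_{W'}=\alpha^{*}K_{W}+E$, hence $-K_{W'}=\alpha^{*}(-K_{W})-E$. Once nefness is in hand the bigness required in 1) is automatic, since a nef divisor on a surface is big exactly when its self-intersection is positive and $(-K_{W'})^{2}=K_{W'}^{2}\geq 1$ under the hypothesis of 1). So the entire content of the lemma is the inequality $-K_{W'}\cdot C\geq 0$ for every irreducible curve $C\subset W'$.

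I would verify this curve by curve. For $C=E$ one gets $-K_{W'}\cdot E=-E^{2}=1>0$. For any other irreducible $C$, set $\bar C=\alpha_{*}C$ and $m=\mathrm{mult}_{p}\bar C\geq 0$, so that $C=\alpha^{*}\bar C-mE$ and
$$-K_{W'}\cdot C=(-K_{W})\cdot\bar C-m=:a-m,$$
and the task becomes to prove $a\geq m$. When $a=0$, the fact that $-K_{W}$ is big and nef (so $(-K_{W})^{2}>0$) lets the Hodge index theorem force $\bar C^{2}<0$, whereupon adjunction $\bar C^{2}+K_{W}\bar C=2p_{a}(\bar C)-2$ together with $K_{W}\bar C=0$ gives $\bar C^{2}=-2$; thus $\bar C$ is a $(-2)$-curve, and by hypothesis $p\notin\bar C$, so $m=0=a$.

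The main estimate is the case $a\geq 1$, which I would treat by contradiction, assuming $m\geq a+1$. Irreducibility of $C$ gives $p_{a}(C)\geq 0$, so the genus-drop relation $p_{a}(C)=p_{a}(\bar C)-\binom{m}{2}$ yields $p_{a}(\bar C)\geq\binom{m}{2}$; fed into adjunction this gives $\bar C^{2}=2p_{a}(\bar C)-2+a\geq m(m-1)+a-2$. On the other hand the Hodge index inequality $\big((-K_{W})\cdot\bar C\big)^{2}\geq(-K_{W})^{2}\,\bar C^{2}$ reads $\bar C^{2}\leq a^{2}/K_{W}^{2}$. Using $m\geq a+1$ collapses the two bounds to $a^{2}+2a-2\leq a^{2}/K_{W}^{2}$. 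Since $K_{W}^{2}=K_{W'}^{2}+1\geq 2$ under the hypothesis of 1), this forces $a<1$, contradicting $a\geq 1$; this settles 1).

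The delicate case, and the one I expect to be the genuine obstacle, is 2), where $K_{W}^{2}=1$ and the inequality only gives $a\leq 1$. One is then driven to $a=1,\ m=2$ with equality throughout; equality in Hodge index forces $\bar C\equiv -K_{W}$, so the sole potential violator is an irreducible anticanonical curve acquiring a double point at $p$. To finish I would analyse the anticanonical pencil $|-K_{W}|$ on the degree-one surface, whose general member is a smooth arithmetic-genus-one curve, and exclude that $p$ is the singular point of an irreducible member. The hard part is precisely this exclusion: the hypothesis that $p$ lies off the $(-2)$-curves does not by itself prevent $p$ from being the node of a singular anticanonical curve, so one must bring in the geometry of the pencil (the location of its finitely many singular members, and base-point freeness after blowing up the base point) to control which points off the $(-2)$-curves can carry such a double point. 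Making this last step airtight is where the argument for 2) really lives.
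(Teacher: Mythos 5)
Your part 1) is correct and complete: the reduction to nefness (bigness being automatic once nefness is known, since $(-K_{W'})^{2}=K_{W'}^{2}\geq 1$), the case $a=0$ settled by Hodge index and adjunction against the $(-2)$-curve hypothesis, and the case $a\geq 1$ settled by playing the genus-drop estimate against Hodge index using $K_{W}^{2}=K_{W'}^{2}+1\geq 2$. Be aware, though, that there is nothing in the paper to compare this with: the paper's entire proof is the citation ``See \cite{Dolgachev} Proposition 8.1.16'', so for part 1) your elementary argument is strictly more informative than what the paper provides.

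For part 2), the difficulty you isolate at the end is not a loose end that a finer study of the anticanonical pencil will close: it cannot be closed, because part 2) is false as stated. Your own reduction shows that the only possible violator is an irreducible curve $\bar C\equiv -K_{W}$ (hence $\bar C\in|-K_{W}|$, since numerical and linear equivalence coincide on a rational surface) with a double point at $p$, and such pairs $(\bar C,p)$ do occur with $p$ on no $(-2)$-curve. Take $W$ to be the blow-up of $\mathbb{P}^{2}$ at eight general points: this is a del Pezzo surface of degree one with no $(-2)$-curves at all, and its anticanonical pencil (the pencil of cubics through the eight points) has twelve irreducible nodal members whose nodes lie away from the base points. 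If $p$ is one of these nodes, every hypothesis of the lemma is satisfied and $K_{W'}^{2}=0$, yet the strict transform $C=\alpha^{*}\bar C-2E$ satisfies $-K_{W'}\cdot C=1-2=-1<0$, so $-K_{W'}$ is not nef. Consequently no argument can ``exclude that $p$ is the singular point of an irreducible anticanonical member''; that exclusion must be added as a hypothesis. With the extra assumption that $p$ is not a singular point of any irreducible member of $|-K_{W}|$ (a condition holding for a general $p$, which is how the lemma is actually applied in the paper, e.g.\ to the eight general points in Theorem 4.2), your argument does prove 2). So the gap you flag is genuine, but it is a gap in the paper's statement of the lemma (presumably a hypothesis lost in transcribing \cite{Dolgachev}), not a missing idea in your approach.
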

\begin{proof}
See \cite{Dolgachev} Proposition 8.1.16.
\end{proof}

\bigskip

\section{Numerical Classification}

\begin{lemma}
If $S$ is a minimal surface with $\chi(\mathcal{O}_{S})=5$ and $ K^{2}_{S}=9$, then
\begin{enumerate}[1)]
\item  $q(S)=0$ and $p_{g}(S)=4$;
\item The canonical map of $S$ is not composed with a pencil.
\end{enumerate}
\end{lemma}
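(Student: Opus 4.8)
The plan is to treat the two assertions separately by means of standard numerical inequalities for surfaces of general type.

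For 1), since $\chi(\mathcal{O}_S)=5$ gives $p_g(S)-q(S)=4$, it suffices to rule out $q(S)\geq 1$. I would argue through the Albanese map $\mathrm{alb}\colon S\to\mathrm{Alb}(S)$. If its image is a surface (possible only for $q=2$), Severi's inequality $K_S^2\geq 4\chi(\mathcal{O}_S)=20$ already contradicts $K_S^2=9$. Otherwise the image is a curve, and $\mathrm{alb}$ induces a fibration $f\colon S\to B$ over a curve of genus $b\geq 1$ whose general fibre $F$ has genus $g\geq 2$. Writing $\chi_f=\chi(\mathcal{O}_S)-(g-1)(b-1)$ and $K_{S/B}^2=K_S^2-8(g-1)(b-1)$, the condition $K_{S/B}^2\geq 0$ forces $(g-1)(b-1)\leq 1$. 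If $(g-1)(b-1)=0$ (i.e. $b=1$, $q=1$) the slope inequality $K_{S/B}^2\geq\frac{4(g-1)}{g}\chi_f$ gives $9\geq\frac{4(g-1)}{g}\cdot 5\geq 10$; if $(g-1)(b-1)=1$ (forcing $g=b=2$, the remaining $q=2$ case) it gives $1\geq 8$. Both are absurd, so $q=0$ and $p_g=4$. Equivalently, one may invoke Debarre's inequality $K_S^2\geq 2p_g$ for minimal surfaces of general type with $q\geq 1$.

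For 2), suppose the canonical map $\phi_{K_S}\colon S\dashrightarrow\mathbb{P}^3$ is composed with a pencil. Since $q=0$ by part 1), every pencil on $S$ is rational, so we get a fibration $f\colon S\to\mathbb{P}^1$ with moving part $M=aF$ of $|K_S|$, where $a\geq p_g-1=3$. The Hodge index theorem applied to $K_S$ and $F$ shows $K_S\cdot F>0$, whence the general fibre has genus $g\geq 2$; as $K_S^2\geq K_S\cdot M=a(2g-2)\geq 3(2g-2)$, the value $g\geq 3$ would give $K_S^2\geq 12$, impossible. Hence $g=2$.

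It remains to exclude a genus $2$ pencil, and this is where the main obstacle lies. Here $f_*\omega_{S/B}$ is a nef rank-$2$ bundle of degree $6$ on $\mathbb{P}^1$, and the requirement that the four canonical sections all restrict to a one-dimensional subspace of $H^0(F,\omega_F)$ on the general fibre pins the splitting type down to the extreme case $f_*\omega_{S/B}\cong\mathcal{O}(5)\oplus\mathcal{O}(1)$; concretely $M=3F$ and $K_S=3F+Z$ with $F\cdot Z=2$, $K_S\cdot Z=3$, $Z^2=-3$. Note that the slope inequality alone does not suffice, since the corresponding slope $K_{S/B}^2/\chi_f=17/6$ lies well inside the admissible range $[2,7]$. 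I would instead realise $S$, through its relative canonical model, as a double cover of the Hirzebruch surface $W=\mathbb{P}(f_*\omega_{S/B})\cong\mathbb{F}_4$ and use Horikawa's description of genus $2$ pencils: the extreme splitting forces the branch divisor, of relative degree $6$, to meet the $(-4)$-section of $W$ too negatively, so that the branch-locus data and the associated Horikawa indices cannot be reconciled with $\chi(\mathcal{O}_S)=5$ and $K_S^2=9$ for a minimal $S$. Carrying out this branch-locus bookkeeping is the technical heart of the argument; the reductions above are the routine part.
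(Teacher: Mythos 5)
Part 1) of your argument is correct and, unlike the paper, self-contained: Severi's inequality disposes of the maximal Albanese dimension case, and Arakelov plus Xiao's slope inequality dispose of the fibred cases; your closing remark that one may instead invoke $K_{S}^{2}\geq 2p_{g}$ for irregular minimal surfaces of general type is in substance exactly what the paper does, since its proof of 1) is a bare citation of Lemma 14 of Bombieri. Your reduction in part 2) is also sound as far as it goes: with $q=0$ the pencil is rational, nefness of $f_{*}\omega_{S/\mathds{P}^{1}}$ together with $p_{g}=4$ rules out the splitting $(6,0)$, the composed-with-a-pencil hypothesis rules out $(3,3)$ and $(4,2)$, and your intersection numbers for $K_{S}=3F+Z$ check out.

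The gap is exactly where you place it, and it is genuine: the exclusion of a genus $2$ pencil with $f_{*}\omega_{S/\mathds{P}^{1}}\cong\mathcal{O}(5)\oplus\mathcal{O}(1)$ is asserted, not proved, and the mechanism you name does not suffice. If the branch curve $R\equiv 6\Delta_{0}+2m\Gamma$ on $\mathds{F}_{4}$ had only negligible singularities, then $f_{*}\omega_{S/\mathds{P}^{1}}\cong\mathcal{O}(m-4)\oplus\mathcal{O}(m-8)$, so $m=9$, and indeed $R\Delta_{0}=-6<0$ and $(R-\Delta_{0})\Delta_{0}=-2<0$ would force the non-reduced divisor $2\Delta_{0}\subset R$; but this clean case also yields $K_{S}^{2}=2\chi-6=4$, so it is already excluded by $K_{S}^{2}=9$ and was never the case to worry about. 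To reach $K^{2}=2\chi-1$ the branch data must involve Horikawa's non-negligible singularities (types $I_{k}$--$V$), whose resolution passes through elementary transformations that change both the ruled surface and the class of the branch curve, so intersection numbers against the $(-4)$-section no longer control the situation; ruling out those configurations is the actual content of the step, and it requires carrying out Horikawa's (or Catanese--Pignatelli's) bookkeeping, which you have not done. Note that the paper does not do it either: its proof of 2) is a citation of Zucconi's Theorem 4.1, a general numerical result on surfaces whose canonical map is composed with a pencil, and the branch-curve argument you gesture at appears there only as an unproved alternative (Remark 6.3, the case $a=-1$, $b=3$). To complete your proof, either cite such a general result, or execute the genus $2$ analysis in full.
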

\begin{proof}
1) are from \cite{Bombieri} Lemma 14; 2) are from \cite{Zucconi} Theorem 4.1.
\end{proof}

\begin{pro}
Let $S$ be a minimal surface with $\chi(\mathcal{O}_{S})=5, K^{2}_{S}=9$ having a canonical involution $i$, $\tau$ the number of the isolated fixed points of $i$, $\epsilon: \widehat{S} \rightarrow S$ the blowing-up at the isolated fixed points, $\widehat{i}$ an involution on $\widehat{S}$ induced by $i$ and $\widehat{T}=\widehat{S}/\widehat{i}$. Then we have $p_{g}(\widehat{T})=0$ and $\tau \in \{1,3,5\}$.
\end{pro}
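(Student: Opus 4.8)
The plan is to split the proposition into the two assertions — that $p_g(\widehat{T})=0$ and that $\tau\in\{1,3,5\}$ — and tackle them in that order, since the value of $p_g(\widehat{T})$ will feed into the numerical constraints that pin down $\tau$. For the first part I would invoke Lemma \ref{canonical involution}, which gives a dichotomy: either $p_g(\widehat{T})=0$ or $p_g(\widehat{T})=p_g(S)=4$. So the whole first assertion reduces to \emph{excluding} the case $p_g(\widehat{T})=4$. To do this I would use Lemma \ref{canonical involution} again: in the case $p_g(\widehat{T})=p_g(S)$, the smooth fixed curve $R$ is contained in the fixed part of $|K_S|$. Since the canonical system on a surface with $\chi=5,\,K^2=9$ is not composed with a pencil (the preceding lemma), the movable part of $|K_S|$ is a big linear system, and I would argue that forcing $R$ into the fixed part is incompatible with the numerical budget $K_S^2=9$ — essentially $p_g(\widehat{T})=p_g(S)$ would make $\widehat{T}$ itself a surface carrying a near-canonical pencil, contradicting $q(S)=0$ together with the intersection-theoretic identities of Lemma \ref{main lemma}. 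I would run this through items 4) and 5) of Lemma \ref{main lemma} to derive a contradiction with $\chi(\mathcal{O}_S)=5$.

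Granting $p_g(\widehat{T})=0$, the value of $\tau$ follows from numerical bookkeeping. The key input is item 6) of Lemma \ref{main lemma},
\[
\tau=K_S^2+6\chi(\mathcal{O}_{\widehat{T}})-2\chi(\mathcal{O}_S)-2h^0(2K_{\widehat{T}}+\widehat{\delta}),
\]
into which I substitute $K_S^2=9$ and $\chi(\mathcal{O}_S)=5$. With $p_g(\widehat{T})=0$ and $q(S)=0$ I would first determine $\chi(\mathcal{O}_{\widehat{T}})$: since $q(\widehat{T})\le q(S)=0$ gives $q(\widehat{T})=0$, and $p_g(\widehat{T})=0$, we get $\chi(\mathcal{O}_{\widehat{T}})=1$. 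Plugging in yields $\tau=9+6-10-2h^0(2K_{\widehat{T}}+\widehat{\delta})=5-2h^0(2K_{\widehat{T}}+\widehat{\delta})$. Since $\tau\ge 0$ by item 6) and $h^0(2K_{\widehat{T}}+\widehat{\delta})\ge 0$, the nonnegative integer $h^0(2K_{\widehat{T}}+\widehat{\delta})$ can only take the values $0,1,2$, giving $\tau\in\{5,3,1\}$. This also shows $\tau$ is odd, as claimed.

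The remaining point is to confirm that $\tau$ cannot be forced down to an excluded even value or fail to be an integer — but the parity is automatic from $\tau=5-2h^0(\cdot)$, so the only real content is the bound $h^0(2K_{\widehat{T}}+\widehat{\delta})\le 2$. I would extract this from item 3) of Lemma \ref{main lemma}, which gives $h^0(2K_{\widehat{T}}+\widehat{\delta})=\chi(\mathcal{O}_{\widehat{T}})+\frac12(2K_{\widehat{T}}+\widehat{\delta})(K_{\widehat{T}}+\widehat{\delta})$; combining this with items 2) and 4) (which relate $(K_{\widehat{T}}+\widehat{\delta})^2$ and $(K_{\widehat{T}}+\widehat{\delta})\widehat{\delta}$ to $K_S^2,\tau,\chi(\mathcal{O}_S)$) turns the bound into a self-consistent inequality on $\tau$, and nonnegativity of $\tau$ closes the loop.

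**The main obstacle** I anticipate is the first assertion, ruling out $p_g(\widehat{T})=4$: the other case of Lemma \ref{canonical involution} is not numerically absurd on its face, and I expect the contradiction to require genuinely using that $|K_S|$ is \emph{not} composed with a pencil together with a careful comparison of the canonical images upstairs and on $\widehat{T}$, rather than a one-line count. Once $p_g(\widehat{T})=0$ is secured, the determination of $\tau$ is essentially forced by the linear relation above and should be routine.
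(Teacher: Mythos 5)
Your reduction via Lemma \ref{canonical involution} to excluding the case $p_{g}(\widehat{T})=4$, and your treatment of the case $p_{g}(\widehat{T})=0$ (where $q(\widehat{T})=0$ gives $\chi(\mathcal{O}_{\widehat{T}})=1$, item 6) of Lemma \ref{main lemma} gives $\tau=5-2h^{0}(2K_{\widehat{T}}+\widehat{\delta})$, and $\tau\geq 0$ forces $\tau\in\{1,3,5\}$) agree with the paper; that second half is complete as you wrote it, and your extra appeal to item 3) to bound $h^{0}$ is unnecessary, since nonnegativity of $\tau$ already does the job.

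The gap is exactly where you feared: you never actually exclude $p_{g}(\widehat{T})=4$, and the route you sketch cannot work as stated. Items 4) and 5) of Lemma \ref{main lemma} alone yield no contradiction: with $\chi(\mathcal{O}_{\widehat{T}})=5$ they only give $(K_{\widehat{T}}+\widehat{\delta})\widehat{\delta}=-10$ and $\tau=K_{S}R+20$, which are perfectly consistent; and nothing resembling a ``near-canonical pencil'' or an appeal to $q(S)=0$ enters this case at all. What the paper actually does is: (i) pass to the canonical model $\overline{T}$ of $\widehat{T}$, note $K^{2}_{\overline{T}}\leq 4$ and that $|K_{\overline{T}}|$ is not composed with a pencil, so $\phi_{K_{\overline{T}}}$ has degree $2$ onto a quadric; hence $\deg\phi_{K_{S}}\cdot\deg\phi_{K_{S}}(S)=8<9=K^{2}_{S}$, which together with the $2$-connectedness of $K_{S}$ shows that $|K_{S}|$ has no fixed part (only a simple base point); (ii) since Lemma \ref{canonical involution} places $R$ inside the fixed part of $|K_{S}|$, conclude $R=\emptyset$; (iii) then item 5) gives $\tau=20$, and item 2) gives $2(K_{\widehat{T}}+\widehat{\delta})^{2}=9-20=-11$, impossible because $K_{\widehat{T}}+\widehat{\delta}$ is a Cartier divisor. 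Note that the final contradiction is an integrality/parity statement coming from item 2), and it only becomes available after the canonical-map degree computation has forced $R=\emptyset$ (otherwise $K_{S}R$, whose parity is uncontrolled, enters the count as $2(K_{\widehat{T}}+\widehat{\delta})^{2}=-11-K_{S}R$). So the ``careful comparison of canonical images'' you anticipated is not optional fine print; it is the core of the proof, and it is the piece your proposal is missing.
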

\begin{proof}
We have that $p_{g}(\widehat{T})=0$ or $p_{g}(\widehat{T})=4$ by Lemma \ref{canonical involution} and Lemma 3.1.

If $p_{g}(\widehat{T})=4$, then we consider the canonical model $\overline{T}$ of $\widehat{T}$. Since $K^2_{\overline{T}}\leq 4$ and $|K_{\overline{T}}|$ is not composed with a pencil by Lemma 3.1, we have $\mathrm{deg}\phi_{K_{\overline{T}}}=2$ and $\mathrm{deg}\phi_{K_{\overline{T}}}(\overline{T})=2$. Since $K_{S}$ is 2-connected and $8=\mathrm{deg}\phi_{K_{S}}\cdot \mathrm{deg} \phi_{K_{S}}(S) < K^{2}_{S}=9$, $|K_{S}|$ has no a base part and has a simple base point. Now $R=\emptyset$ by Lemma \ref{canonical involution} and $\tau=20$ by Lemma \ref{main lemma} 5). We have $2(K_{\widehat{T}}+\widehat{\delta})^{2}=-11$ by Lemma \ref{main lemma} 2). This contradicts that $K_{\widehat{T}}+\widehat{\delta}$ is a Cartier divisor.

If $p_{g}(\widehat{T})=0$, then $0\leq \tau =5-2h^{0}(2K_{\widehat{T}}+\widehat{\delta})$ by Lemma \ref{main lemma} 6), which gives the result $\tau \in \{1,3,5\}$.
\end{proof}

As a corollary of the Proposition 3.2, we note that the minimal surface with $\chi(\mathcal{O}_{S})=5$ and $K^{2}_{S}=9$ has at most one canonical involution.

\bigskip

\section{the case $\tau=1$}

We consider the case $\tau=1$. Let $p$ be the unique isolated fixed point of the involution $i$, $E\doteq\epsilon^{-1}(p)$ and $A\doteq\pi(E)$.

With the help of contraction theorem, the property of $\widehat{T}$ is described by the following four steps. Indeed, we find a smooth surface $P_{s}$ and a birational morphism $f :\widehat{T} \rightarrow P_{s}$ such that $f_{*}(2K_{\widehat{T}}+\overline{\delta})$ is $\mathbb{Q}$-effective and nef. There are at most two choices of surface $P_{s}$ and morphism $f$.

\bigskip

\begin{step1}
$K_{\widehat{T}}+\overline{\delta}$ is nef and $2K_{\widehat{T}}+\overline{\delta}$ is $\mathbb{Q}$-effective.
\end{step1}
\begin{proof}
Since $\pi ^{*}(K_{\widehat{T}}+ \overline{\delta})\equiv \epsilon^{*}K_{S}$, $K_{\widehat{T}}+ \overline{\delta}$ is nef. By Lemma \ref{main lemma} 6) and $\tau=1$, we have $h^{0}(\mathcal{O}_{\widehat{T}}(2K_{\widehat{T}}+\widehat{\delta}))>0$. By $A(4K_{\widehat{T}}+2\widehat{\delta})=-2<0$, $A$ is contained in the fixed part of $|4K_{\widehat{T}}+2\widehat{\delta}|$. So we have $h^{0}(\mathcal{O}_{\widehat{T}}(2(2K_{\widehat{T}}+\overline{\delta})))= h^{0}(\mathcal{O}_{\widehat{T}}(4K_{\widehat{T}}+2\widehat{\delta}))>0$.
\end{proof}

\begin{step2}
There exists a smooth surface $P$ and a birational morphism $\alpha:\widehat{T}\rightarrow P$ contracting some (-1)-curves $l_{i}$ with $(K_{\widehat{T}}+\overline{\delta})l_{i}=0$. Let $\overline{\delta}_{P}\doteq \alpha_{*}(\overline{\delta})$. Then
\begin{enumerate}[1)]
\item $\overline{\delta}_{P}^{2}=\frac{17}{2}+K_{P}^{2}$ and $K_{P}\overline{\delta}_{P}=-2-K_{P}^{2}$;
\item $\frac{3}{2}K_{P}+\overline{\delta}_{P}$ is nef;
\item $K_{P}^{2}\in \{-3,-2,-1,0\}$.
\end{enumerate}
\end{step2}
\begin{proof}
In this case, we have the numerical conditions $K^{2}_{S}=9,\tau=1,\chi(\mathcal{O}_{S})=5, \chi(\mathcal{O}_{\widehat{T}})=1$, $A^{2}=-2$ and $K_{\widehat{T}}A=0$. Using the two equations in Lemma \ref{main lemma} 2) and 4), we can get $\overline{\delta}^{2}=\frac{17}{2}+K_{\widehat{T}}^{2}$ and $K_{\widehat{T}}\overline{\delta}=-2-K_{\widehat{T}}^{2}$.

Let $\lambda \in \mathbb{Q}\cup \{\infty \}$ be the maximal number such that $\lambda K_{\widehat{T}}+\overline{\delta}$ is nef. By Step 1, we have $\lambda \geq 1$. If $\lambda = 1$, then there exists an extremal ray $l$ with $(K_{\widehat{T}}+\overline{\delta})l=0$ and $K_{\widehat{T}} l<0$ by Lemma \ref{extremal ray}. Clearly, $l$ is a ($-1$)-curve by index theorem and  $(K_{\widehat{T}}+\overline{\delta})^{2}=\frac{9}{2}>0$. So we have a birational morphism $\alpha':\widehat{T}\rightarrow P'$ contracting $l$, where $P'$ is a smooth surface. Since $(K_{\widehat{T}}+\overline{\delta})l=0$ and $K_{\widehat{T}}l=-1$, we have $K^{2}_{P'}=K^{2}_{\widehat{T}}+1$, $\overline{\delta}^{2}_{P'}=\overline{\delta}^{2}+1$ and $K_{P'}\overline{\delta}_{P'}=K_{\widehat{T}}\overline{\delta}-1$ where $\overline{\delta}_{P'}\doteq \alpha'_{*}(\overline{\delta})$. For $P'$ we also have $\overline{\delta}_{P'}^{2}=\frac{17}{2}+K_{P'}^{2}$ and $K_{P'}\overline{\delta}_{P'}=-2-K_{P'}^{2}$. Since $(K_{\widehat{P}'}+\overline{\delta}_{P'})^{2}=(K_{\widehat{T}}+\overline{\delta})^{2}$ is positive again, we can repeat this process untill we get a birational morphism $\alpha:\widehat{T}\rightarrow P$ contracted some ($-1$)-curves $l_{i}$ with $(K_{\widehat{T}}+\overline{\delta})l_{i}=0$, and there is $\lambda > 1$ such that $\lambda K_{P}+\overline{\delta}_{P}$ is nef. For the same reason, we get $\overline{\delta}_{P}^{2}=\frac{17}{2}+K_{P}^{2}$ and $K_{P}\overline{\delta}_{P}=-2-K_{P}^{2}$ where $\overline{\delta}_{P}\doteq \alpha_{*}(\overline{\delta})$.

If $K^{2}_{P}>0$, then $(\overline{\delta}_{P})^{2}(K_{P})^{2}\leq (\overline{\delta}_{P}K_{P})^{2}$ by index theorem. By 1) we have $K^{2}_{P}\leq \frac{8}{9}< 1$, which is a contradiction. Hence we have $K^{2}_{P} \leq 0$.

Let $\lambda \in \mathbb{Q}\cup \{\infty \}$ be the maximal number such that $\lambda K_{P}+\overline{\delta}_{P}$ is nef. Now we have $\lambda > 1$. By Lemma \ref{extremal ray}, there exists an extremal ray $l$ with $(\lambda K_{P}+\overline{\delta}_{P})l=0$ and $K_{P} l<0$. Since $P$ is neither $\mathds{P}^{2}$ nor a minimal ruled surface, $l$ is a ($-1$)-curve by contraction theorem. So we have $\lambda=\overline{\delta}_{P}l \in \frac{1}{2}\mathbb{Z}$ and $\frac{3}{2}K_{P}+\overline{\delta}_{P}$ is nef.

We have $0\leq (\frac{3}{2}K_{P}+\overline{\delta}_{P})(2K_{P}+\overline{\delta}_{P})=\frac{3}{2}+\frac{1}{2}K^{2}_{P}$. Hence, $K^{2}_{P} \in \{-3,-2,-1,0 \}$.
\end{proof}

\begin{step3}
Let $P_{0}\doteq P$ and $\overline{\delta}_{0}\doteq \overline{\delta}_{P}$. Then there exists a nonnegative integer $s$ and a series of birational morphisms $\beta_{i}:P_{i-1}\rightarrow P_{i} (1\leq i \leq s)$, where $\beta_{i}$ is the contraction of a (-1)-curve $l_{i}$ with $(\frac{3}{2}K_{P_{i-1}}+\overline{\delta}_{i-1})l_{i}=0$ and $\overline{\delta}_{i}\doteq\beta_{i*}(\overline{\delta}_{i-1})$. Moreover,
\begin{enumerate}[1)]
\item $2K_{P_{s}}+\overline{\delta}_{s}$ is nef;
\item If $K_{P}^{2}=0$, then s=0 or 1; If $K_{P}^{2}=-1$, then s=2 or 3; If $K_{P}^{2}=-2$, then s=6; If $K_{P}^{2}=-3$, then s=10.
\end{enumerate}
\end{step3}
\begin{proof}
Let $\lambda \in \mathbb{Q}\cup \{\infty \}$ be the maximal number such that $\lambda K_{P}+\overline{\delta}_{P}$ is nef. By Step 2, we have $\lambda \geq \frac{3}{2}$. If $\lambda = \frac{3}{2}$, then there exists an extremal ray $l_{1}$ with $(\frac{3}{2}K_{P}+\overline{\delta}_{P})l_{1}=0$ and $K_{P} l_{1}<0$ by Lemma \ref{extremal ray}. Since $P$ is neither $\mathds{P}^{2}$ nor a minimal ruled surface, $l$ is a ($-1$)-curve by contraction theorem. So there is a birational morphism $\beta_{1}:P\rightarrow P_{1}$ contracted $l_{1}$, where $P_{1}$ is a smooth surface. Since $(\frac{3}{2}K_{P}+\overline{\delta}_{P})l_{1}=0$ and $K_{P}l_{1}=-1$, we have $K^{2}_{P_{1}}=K^{2}_{P}+1$, $\overline{\delta}^{2}_{P_{1}}=\overline{\delta}^{2}_{P}+\frac{9}{4}$ and  $K_{P_{1}}\overline{\delta}_{P_{1}}=K_{P}\overline{\delta}_{P}-\frac{3}{2}$. In the same way, we can repeat this process untill we get a series of birational morphisms $\beta_{i}:P_{i-1}\rightarrow P_{i} (1\leq i \leq s)$ contracted ($-1$)-curves $l_{i}$ with $(\frac{3}{2}K_{P}+\overline{\delta}_{P})l_{i}=0$, and there is $\lambda > \frac{3}{2}$ such that $\lambda K_{P_{s}}+\overline{\delta}_{P_{s}}$ is nef. With the same calculation method, we have $K^{2}_{P_{s}}=K^{2}_{P}+s$, $\overline{\delta}^{2}_{P_{s}}=\overline{\delta}^{2}_{P}+\frac{9}{4}s$ and $K_{P_{s}}\overline{\delta}_{P_{s}}=K_{P}\overline{\delta}_{P}-\frac{3}{2}s$.

By index theorem, we have $K^{2}_{P_{s}}\leq 7$, which means that $P_{s}$ is neither $\mathds{P}^{2}$ nor a minimal ruled surface. So $2K_{P_{s}}+\overline{\delta}_{P_{s}}$ is nef as before.

We have $0\leq (K_{P_{s}}+\overline{\delta}_{P_{s}})(2K_{P_{s}}+\overline{\delta}_{P_{s}})=\frac{5}{2}-\frac{s}{4}$ and $0\leq (2K_{P_{s}}+\overline{\delta}_{P_{s}})^{2}=K^{2}_{P}+\frac{1}{2}+\frac{s}{4}$. If $s\geq 1$, then $K^{2}_{P_{s}}=K^{2}_{P}+s \geq 1$, which means $(\overline{\delta}_{P_{s}})^{2}(K_{P_{s}})^{2}\leq (\overline{\delta}_{P_{s}}K_{P_{s}})^{2}$ by index theorem. Now we can get $s\leq 2(8-9K^{2}_{P})/ (10+K^{2}_{P})$ for any $ K^{2}_{P}$. So the result of 2) can be calculated.
\end{proof}

\begin{step4}
The following four cases ``$K_{P}^{2}=0, s=1$'', ``$K_{P}^{2}=-1, s=3$'', ``$K_{P}^{2}=-2, s=6$'' and ``$K_{P}^{2}=-3, s=10$'' do not happen.
\end{step4}
\begin{proof}
Let $l \subseteq \widehat{T}$ be a ($-1$)-curve with $(K_{\widehat{T}}+\overline{\delta})l=0$. Since $K_{\widehat{T}}+\overline{\delta}$ is big and nef and $(K_{\widehat{T}}+\overline{\delta})A=0$, Hodge index tells that $Al<\frac{3}{2}$. And by $\overline{\delta}l=-K_{\widehat{T}}l=1$, we have $Al= 2(\widehat{\delta}l-\overline{\delta}l) \in 2\mathbb{Z}$. Thus $Al=0$ and $A$ is still a ($-2$)-curve in $P$.

Let $l\subseteq P$ be a ($-1$)-curve with $(\frac{3}{2}K_{P}+\overline{\delta}_{P})l=0$. Since $\frac{3}{2}K_{P}+\overline{\delta}_{P}$ is also big and nef, we have $Al<\frac{3}{2}$ as before. However, as $\overline{\delta}_{P}l=-\frac{3}{2}K_{P}l=\frac{3}{2}$, we have $Al= 2(\widehat{\delta}l-\overline{\delta}l) \notin 2\mathbb{Z}$. Thus $Al=1$ and $A$ become a ($-1$)-curve contained in $(\frac{3}{2}K_{P}+\overline{\delta}_{P})^{\perp}$ after contracting $l$.

Hence, if $s\geq 1$ then we must have $s\geq 2$. If $s\geq 3$, let $l'$ be another ($-1$)-curve with $(\frac{3}{2}K_{P_{2}}+\overline{\delta}_{P_{2}})l'=0$. For convenience, we do not distinguish $l'$ in  $P_{2}$ from the total inverse image of it in $P$. We also have $(\frac{3}{2}K_{P}+\overline{\delta}_{P})l'=0$ and $Al'=1$. It contradicts that $A$ has been contracted to a point and $l'$ is a curve in $P_{2}$.

In conclusion, we have $s=0$ or $s=2$.
\end{proof}

\bigskip

Now we only need to consider the two cases ``$K_{P}^{2}=0, s=0$'' and ``$K_{P}^{2}=-1, s=2$''. We will give the detailed description of these surfaces and prove the existence in each case.

\begin{pro}
If $K_{P}^{2}=0$ and $s=0$, then $S$ is the minimal resolution of a double cover of a Hirzebruch surface $\mathds{F}_{2}$ branched along a curve $B_{1}$ in $|\Delta_{\infty}|$ and a curve $B_{2}$ in $|9\Delta_{\infty}+18\Gamma|$, where $\Delta_{\infty}$ denotes the section at infinity and $\Gamma$ is a fibre, having eight ordinary 4-tuple points on $B_{2}$ as the only essential singularities.
\end{pro}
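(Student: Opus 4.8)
The plan is to reverse-engineer the double cover. Recall that $\pi:\widehat S\to\widehat T$ is the double cover branched over $2\widehat\delta=\pi(R)+A$, that $\widehat T\to P$ is the morphism $\alpha$ of Step 2, and that $S$ is recovered from $\widehat S$ by contracting the single $(-1)$-curve $E=\epsilon^{-1}(p)$; equivalently $A=\pi(E)$ is a $(-2)$-curve contained in the branch locus. So it suffices to produce a birational morphism $\sigma:P\to\mathds F_2$, to push the branch divisor down along it, and to check that $\alpha$ is an isomorphism, so that $\widehat T=P$ and $\widehat S$ is literally the canonical resolution of the resulting double cover.

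First I would identify $\sigma$. The surface $P$ is rational: one has $q(\widehat T)=p_g(\widehat T)=0$, and checking $\kappa(\widehat T)=-\infty$ lets Castelnuovo's criterion apply. Running the contraction theorem (Lemma \ref{Contraction Theorem}) produces a sequence of $(-1)$-contractions of $P$ onto a relatively minimal rational surface. The $(-2)$-curve $A$, with $K_PA=\overline\delta_PA=0$, is decisive: I would show it is disjoint from every contracted $(-1)$-curve --- at the level of $P$ this follows from $\overline\delta_PA=0$ and the Hodge index theorem, and it is exactly what later reads as $B_1\cdot B_2=0$ --- so $A$ survives as a $(-2)$-curve in the minimal model. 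As the only relatively minimal rational surface carrying a $(-2)$-curve is $\mathds F_2$, this forces the minimal model to be $\mathds F_2$ with $A\mapsto\Delta_\infty$ the negative section; and $K_{\mathds F_2}^2-K_P^2=8-0=8$ shows $\sigma$ contracts exactly eight $(-1)$-curves $E_1,\dots,E_8$ (in particular $\alpha$ is an isomorphism and $\widehat T=P$).

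Next I would descend the branch. Put $B_1:=\sigma_*A\in|\Delta_\infty|$ and $B_2:=\sigma_*\pi(R)=a\Delta_\infty+b\Gamma$, and write $\pi(R)=\sigma^*B_2-\sum_i m_iE_i$, where $m_i$ is the multiplicity of $B_2$ at the $i$-th centre. Each $m_i$ is even because $\widehat\delta=\tfrac12(\pi(R)+A)$ is an integral divisor class, and $m_i\ge2$ since the centres are singular points of $B_2$. Feeding the known numbers $\pi(R)^2=4\overline\delta_P^2=34$ and $K_P\pi(R)=2K_P\overline\delta_P=-4$ into $\pi(R)=\sigma^*B_2-\sum_i m_iE_i$ gives two relations among $a,b$ and the $m_i$; a short computation (the discriminant of the resulting quadratic in $a$ is non-negative only at the minimal configuration) shows the unique solution is $m_i=4$ for all $i$ and $(a,b)=(9,18)$. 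Thus $B_2\in|9\Delta_\infty+18\Gamma|$ has eight ordinary quadruple points, and minimality of $S$ rules out worse singularities, so these are the only essential ones. Finally $\sigma_*(2\widehat\delta)=B_1+B_2\in|2(5\Delta_\infty+9\Gamma)|$ exhibits $\widehat S$ as the canonical resolution of the double cover of $\mathds F_2$ branched over $B_1+B_2$; contracting $E$ recovers $S$, and the checks $\chi(\mathcal O_S)=5$, $K_S^2=9$ confirm the bookkeeping.

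For existence I would reverse the construction: blow up $\mathds F_2$ at eight general points off $\Delta_\infty$ to obtain $P$ with $-K_P$ nef (Lemma \ref{nef 1}), take $B_1=\Delta_\infty$ and a general $B_2\in|9\Delta_\infty+18\Gamma|$ with ordinary quadruple points at these centres, and use Lemma \ref{base point free} / Lemma \ref{base point free 1} and Bertini to make the branch $2\widehat\delta$ smooth. The main obstacle is the middle step: converting the purely numerical data ($K_P^2,\ \overline\delta_P^2,\ K_P\overline\delta_P$ and the nef-ness of $2K_P+\overline\delta_P$ from Steps 1--4) into the precise geometry on $\mathds F_2$ --- namely that $A$ really persists as a $(-2)$-curve (so the eight centres avoid $B_1$), that the eight singularities are genuinely ordinary quadruple points at distinct points rather than something more degenerate or infinitely near, and that the branch carries no further base points or non-reduced components. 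I expect the Hodge-index inequalities together with the nef-cone analysis already set up, plus the minimality of $S$, to be exactly what eliminates the competing configurations.
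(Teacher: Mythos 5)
There are two genuine gaps, both at the point you yourself flag as ``the main obstacle''.

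First, your identification of the minimal model is not justified. You run an \emph{arbitrary} sequence of extremal contractions and claim $A$ stays disjoint from every contracted $(-1)$-curve ``by $\overline{\delta}_{P}A=0$ and the Hodge index theorem''. Hodge index gives a bound on $Al$ only when $l$ is orthogonal to a fixed big and nef class, which an arbitrary contracted curve need not be; and the claim is in fact false for an undirected MMP: on the blow-up of $\mathds{F}_{2}$ at eight points off $\Delta_{\infty}$ (the very surface you are trying to reach), the strict transform of the fibre through any blown-up point is a $(-1)$-curve meeting $A$ once, and a run of the MMP that contracts it first turns the image of $A$ into a $(-1)$-curve, which can then itself be contracted --- so the minimal model need not carry any $(-2)$-curve at all (compare: a two-step blow-up of $\mathds{P}^{2}$ contains a $(-2)$-curve). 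The paper avoids this by \emph{directing} the MMP: it only contracts curves $l$ with $(2K+\overline{\delta})l=0$, continuing past the threshold $2$ until it reaches a $\mathds{P}^{1}$-bundle after exactly $r=8$ steps (ruled out otherwise by $0\leq(\tfrac{5}{2}K_{P_{r}}+\overline{\delta}_{P_{r}})(2K_{P_{r}}+\overline{\delta}_{P_{r}})=-\tfrac{1}{2}$). Even for such directed curves, Hodge index only yields $Al\leq 1$; the conclusion $Al=0$ needs the parity argument $Al=2(\widehat{\delta}l-\overline{\delta}l)\in 2\mathbb{Z}$ coming from integrality of $\widehat{\delta}$ --- you use this parity for the multiplicities $m_{i}$ but omit it exactly where it is indispensable.

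Second, your ``short computation'' does not have a unique solution, so the middle step collapses. Granting $\sigma:P\rightarrow\mathds{F}_{2}$ with $A\mapsto\Delta_{\infty}$ and writing $B_{2}=a\Delta_{\infty}+b\Gamma$, your constraints are $b=2a$ (from $B_{1}B_{2}=0$), $\sum_{i=1}^{8}m_{i}=2b-4$ (from $K_{P}\pi(R)=-4$) and $\sum_{i=1}^{8}m_{i}^{2}=2a^{2}-34$ (from $\pi(R)^{2}=34$), with $m_{i}$ even and $\geq 2$. Besides $(a,b)=(9,18)$, $m_{i}\equiv 4$, these are also satisfied by $(a,b)=(11,22)$ with $(m_{i})=(6,6,6,6,4,4,4,4)$: indeed $\sum m_{i}=40=2b-4$ and $\sum m_{i}^{2}=208=2\cdot 121-34$. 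So no discriminant argument can single out $(9,18)$ (yours implicitly assumes all $m_{i}$ equal). The paper pins the answer down with genuinely more input: because each contracted curve is $(2K+\overline{\delta})$-trivial, one gets the \emph{exact} values $\overline{\delta}_{P}E_{i}=-2K_{P}E_{i}=2$, i.e.\ $m_{i}=4$; and the class of $\overline{\delta}$ on $\mathds{F}_{2}$ is determined by the rationality theorem (the nef threshold satisfies $2<\lambda<\tfrac{5}{2}$, hence $\lambda\in\{\tfrac{7}{3},\tfrac{9}{4},\tfrac{13}{6}\}$, and the extremal-ray equations force $\lambda=\tfrac{9}{4}$, $\overline{\delta}\equiv\tfrac{9}{2}\Delta_{\infty}+9\Gamma$). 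This is precisely the content your last sentence defers to ``the nef-cone analysis already set up''; it is the core of the proof, not a verification. (A smaller inaccuracy: $K^{2}_{\mathds{F}_{2}}-K^{2}_{P}=8$ counts the $\sigma$-exceptional curves but says nothing about $\alpha$, so ``$\alpha$ is an isomorphism'' does not follow --- nor is it needed, since the statement only concerns \emph{essential} singularities.)
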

\begin{proof}
Let $P\doteq P_{0}$. With the same method, we can get a series of birational morphisms $\beta_{i}:P_{i-1}\rightarrow P_{i} (1\leq i \leq r)$ contracted ($-1$)-curves $l_{i}$ with $(2K_{P}+\overline{\delta}_{P})l_{i}=0$, and there is $\lambda > 2$ such that $\lambda K_{P_{r}}+\overline{\delta}_{P_{r}}$ is nef. In particular, we have $K^{2}_{P_{r}}=K^{2}_{P}+r$, $\overline{\delta}^{2}_{P_{r}}=\overline{\delta}^{2}_{P}+4r$ and $K_{P_{s}}\overline{\delta}_{P_{r}}=K_{P}\overline{\delta}_{P}-2r$.

If $r\geq 1$, then $(\overline{\delta}_{P_{r}})^{2}(K_{P_{r}})^{2}\leq (\overline{\delta}_{P_{r}}K_{P_{r}})^{2}$ by index theorem, which means $r\leq 8$ and $P_{r}$ can't be $\mathds{P}^{2}$. If $P_{r}$ is not a $\mathds{P}^{1}$-bundle, then  $\frac{5}{2}K_{P_{r}}+\overline{\delta}_{P_{r}}$ is nef as before. So we have $0\leq(\frac{5}{2}K_{P_{r}}+\overline{\delta}_{P_{r}}) (2K_{P_{r}}+\overline{\delta}_{P_{r}}) =-\frac{1}{2}$, which is a contradiction.

Hence, $P_{r}$ is a $\mathds{P}^{1}$-bundle and $r=8$. With the method in Step 4, $A$ is a ($-2$)-curve in $P_{8}$ and $P_{8}$ is the Hirzebruch surface $\mathds{F}_{2}$. Let $\lambda \in \mathbb{Q}\cup \{\infty \}$ be the maximal number such that $\lambda K_{P_{8}}+\overline{\delta}_{P_{8}}$ is nef. If $\lambda \geq \frac{5}{2}$, then we also have the contradiction as before. We can assume $2< \lambda < \frac{5}{2}$, then $\lambda=\frac{7}{3}, \frac{9}{4}$ or $\frac{13}{6}$ by rationality theorem (see  \cite{Kollar}). There exists an extremal ray $l$ with $(\lambda K_{P_{8}}+\overline{\delta}_{P_{8}})l=0$ and $K_{P_{8}}\cdot l<0$ by Lemma \ref{extremal ray}. $l$ will be the fibre $\Gamma$ by the contraction theorem and $12(\lambda K_{P_{8}}+\overline{\delta}_{P_{8}})\equiv a\Gamma$ where $a\in \mathbb{Z}$. Besides, we have $K_{P_{8}}\equiv -2\Delta_{0}$, so $12\overline{\delta}_{P_{8}}\equiv 24\lambda \Delta_{0}+a\Gamma$ and
\begin{equation*}
  \begin{cases}
    -2a=(a\Gamma)(-2\Delta_{0})=12(\lambda K_{P_{8}}+\overline{\delta}_{P_{8}})K_{P_{8}}=12(8\lambda-18),  \\
    24\lambda a=(a\Gamma)(24\lambda \Delta_{0}+af)=12(\lambda K_{P_{8}}+\overline{\delta}_{P_{8}}) (12\overline{\delta}_{P_{8}})=144(-18\lambda+\frac{81}{2}).  \\
  \end{cases}
\end{equation*}
So $\lambda=\frac{9}{4}$ and $a=0$, which infers $\widehat{\delta}_{P_{8}}\equiv 5\Delta_{\infty}+9\Gamma$.

Considering the morphism $\eta:P\rightarrow P_{8}$, if we assume that $\widehat{\delta}_{P}\equiv (5\Delta_{\infty}+9\Gamma)-\Sigma_{i=1}^{8}c_{i}E_{i}$, then $c_{i}=\widehat{\delta}_{P}E_{i}=2$. Hence, the branch curve of the double cover $S\dashrightarrow P_{8}$ is the union of  $B_{1}=\Delta_{\infty}$ and $B_{2}$ in $|9\Delta_{\infty}+18\Gamma|$, having eight ordinary 4-tuple points on $B_{2}$ as the only essential singularities.
\end{proof}

\begin{theorem}\label{family 1}
We assume that $f:P\rightarrow \mathds{F}_{2}$ is the blowing-up at 8 points $p_{i}(1\leq i\leq8)$, where all these points are in general position and none of them is in the section at infinity of the Hirzebruch surface $\mathds{F}_{2}$. Let $\Omega \equiv 9\Delta_{\infty}+18\Gamma-4\Sigma_{i=1}^{8}E_{i}$, where $\Delta_{\infty}$ is the section at infinity, $\Gamma$ is a fibre and $E_{i}=f^{-1}(p_{i})$ for $1\leq i\leq8$. Then
\begin{enumerate}[1)]
\item $|\Omega|$ is base point free;
\item The general element in $|\Omega|$ is smooth and irreducible;
\item Let $B$ be a general element in $|\Omega|$ and $A \doteq\Delta_{\infty}$ be the (-2)-curve. If we assume that $S$ is the minimal resolution of a double cover of $P$ branched in $B\cup A$, then $q(S)=0$, $p_{g}(S)=4$ and $ K^{2}_{S}=9$.
\end{enumerate}
\end{theorem}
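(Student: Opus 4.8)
The plan is to prove the three assertions in order, the first being by far the most substantial. Set $K_P \equiv -2\Delta_\infty - 4\Gamma + \sum_{i=1}^8 E_i$ and $\widehat\delta \equiv 5\Delta_\infty + 9\Gamma - 2\sum_{i=1}^8 E_i$, so that $2\widehat\delta \equiv \Omega + A$ and $\widehat\delta = \lceil\overline\delta\rceil$ with $\overline\delta \doteq \tfrac12\Omega$. One records once and for all the intersection numbers $\Omega^2 = 34$, $\Omega\cdot A = \Omega\cdot\Delta_\infty = 0$, $(K_P+\widehat\delta)^2 = 4$ and $\widehat\delta(K_P+\widehat\delta) = 6$, together with $-K_P = 2\Delta_\infty + 4\Gamma - \sum E_i$ and $(-K_P)^2 = 0$.

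For 1) I would first show that $-K_P$ is nef. Since $\mathds{F}_2$ is a weak del Pezzo surface whose only $(-2)$-curve is $\Delta_\infty$, and the centres $p_i$ avoid $\Delta_\infty$ and are in general position, every blow-up in the tower $\mathds{F}_2\leftarrow\cdots\leftarrow P$ is centred at a point lying on no $(-2)$-curve; Lemma \ref{nef 1}, applied eight times and with part 2) at the last blow-up (where $K_P^2=0$), gives that $-K_P$ is nef. As $\Delta_0 = \Delta_\infty + 2\Gamma$ is the pull-back of a nef class, the divisor $L \doteq \Omega - K_P = 5(-K_P) + \Delta_0$ is nef with $L^2 = 42 \geq 5$, so Reider's method (as in the proof of Lemma \ref{base point free}) applies to $|\Omega| = |K_P + L|$: a base point would produce an effective $E$ with $(E^2, L\cdot E) = (-1,0)$ or $(0,1)$. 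The case $(-1,0)$ is impossible, for $L\cdot E = 0$ forces $(-K_P)\cdot E = \Delta_0\cdot E = 0$, hence $K_P\cdot E = 0$ and adjunction gives the absurd equality $2p_a(E) - 2 = -1$. In the case $(0,1)$ one gets $(-K_P)\cdot E = 0$ and $\Delta_0\cdot E = 1$; since $(-K_P)^2 = 0$, the Hodge index theorem forces $E \equiv \mu(-K_P)$, whence $1 = \Delta_0\cdot E = 4\mu$, i.e. $\mu = \tfrac14$ — but $\tfrac14(-K_P)$ is not an integral class (the coefficients of the $E_i$ in $-K_P$ are $-1$), a contradiction. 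Thus $|\Omega|$ is base point free.

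The point I expect to be the main obstacle is exactly this base-point-freeness. The clean parity criterion of Lemma \ref{base point free} does not apply here: $\Omega - K_P$ is not an odd multiple of $K_P$ modulo $2\,\mathrm{Pic}(P)$, the obstruction being precisely the class $\Delta_\infty$. One is therefore forced to run Reider by hand, and the exclusion of the genus-one configuration $(0,1)$ rests on $-K_P$ being nef of self-intersection zero; this is where the general-position hypothesis genuinely enters, since it is needed to keep the centres off the $(-2)$-curves that appear along the tower of blow-ups.

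Given 1), assertion 2) is routine: by Bertini a general $B\in|\Omega|$ is smooth away from the (empty) base locus, hence smooth, and it is connected because $h^1(\mathcal{O}_P(-\Omega)) = h^1(K_P+\Omega) = 0$ by Kawamata–Viehweg vanishing ($\Omega$ is nef and big as $\Omega^2 = 34$); a smooth connected curve is irreducible. For 3), since $B\cdot A = \Omega\cdot A = 0$ the curves $B$ and $A$ are disjoint and smooth, so the double cover $\pi:\widehat S\to P$ defined by $\widehat\delta$ and branched along $B+A$ is smooth, and $S$ is its blow-down along the $(-1)$-curve $\pi^{-1}(A)$ (with $\epsilon^*K_S = \pi^*(K_P+\overline\delta)$ nef by Lemma \ref{main lemma} 1), so $S$ is minimal), whence $\tau = 1$. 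Lemma \ref{main lemma} 2) and 4) then give $K_S^2 = 2(K_P+\overline\delta)^2 = 9$ and $\chi(\mathcal{O}_S) = 2\chi(\mathcal{O}_P) + \tfrac12\widehat\delta(K_P+\widehat\delta) = 2 + 3 = 5$. Finally $q(S) = h^1(\mathcal{O}_{\widehat S}) = h^1(\mathcal{O}_P(-\widehat\delta)) = h^1(K_P+\widehat\delta)$ (as $P$ is rational, by Serre duality), and this vanishes by Kawamata–Viehweg vanishing because $K_P + \widehat\delta = K_P + \lceil\overline\delta\rceil$ with $\overline\delta = \tfrac12\Omega$ nef and big and fractional part $\tfrac12\Delta_\infty$ supported on the smooth curve $\Delta_\infty$; hence $q(S) = 0$ and $p_g(S) = \chi(\mathcal{O}_S) - 1 + q(S) = 4$.
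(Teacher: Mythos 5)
Your proof is correct, but it takes a genuinely different route from the paper's at the two substantial points, 1) and 3). For base-point-freeness the paper never runs Reider on $|\Omega|$ itself: it applies the packaged parity version (Lemma \ref{base point free}) to the subsystem $|K_{P}+L|=|-4K_{P}+2\Gamma|$ with $L=-5K_{P}+2\Gamma$ (which \emph{does} satisfy the hypothesis $L\equiv(2a+1)K_{P}+2bD$), and then kills possible base points on $\Delta_{\infty}$ by the inclusion $|-4K_{P}+2\Gamma|+\Delta_{\infty}\subset|\Omega|$ together with the dimension count $h^{0}(\Omega)\geq 20>19=h^{0}(-4K_{P}+2\Gamma)$, which yields a member of $|\Omega|$ not containing $\Delta_{\infty}$ and hence (by $\Omega\cdot\Delta_{\infty}=0$) disjoint from it. You instead run Reider by hand on $L=\Omega-K_{P}=5(-K_{P})+\Delta_{0}$, $\Delta_{0}=\Delta_{\infty}+2\Gamma$, which is indeed forced once one notes, as you do, that the parity hypothesis of Lemma \ref{base point free} fails for this $L$; your exclusion of the pair $(E^{2},LE)=(-1,0)$ by adjunction parity and of $(0,1)$ by Hodge index plus the non-integrality of $\tfrac{1}{4}K_{P}$ is correct, and both arguments rest on the same input, namely $-K_{P}$ nef via Lemma \ref{nef 1}. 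In 3) the paper simply quotes Lemma 6 of Horikawa to get $q(\widehat{S})=0$, $p_{g}(\widehat{S})=4$, $K^{2}_{\widehat{S}}=8$ and contracts the $(-1)$-curve over $A$, whereas you recompute the invariants from the double-cover formulas, Serre duality and Kawamata--Viehweg vanishing for the fractional divisor $\tfrac{9}{2}\Delta_{\infty}+9\Gamma-2\Sigma_{i=1}^{8}E_{i}$; this is also valid (with the caveat that the round-up must be taken of this specific $\mathbb{Q}$-divisor, whose fractional part $\tfrac{1}{2}\Delta_{\infty}$ has smooth support, not of the numerical class). The one blemish is your parenthetical claim that $S$ is minimal: you cite Lemma \ref{main lemma} 1), but that statement is proved under the hypothesis that $S$ is a \emph{minimal} surface with an involution, so the citation is circular. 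Since minimality is not among the stated conclusions this does not invalidate the theorem (and the paper's own assertion that the preimage of $A$ is the only $(-1)$-curve on $\widehat{S}$ is likewise left unproved), but you should either drop the claim or check directly that $K_{P}+\overline{\delta}$ is nef.
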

\begin{proof}
If we define $L\doteq 10\Delta_{\infty}+22\Gamma-5\Sigma_{i=1}^{8}E_{i}$, then $L^{2}=40$ and $L\equiv -5K_{P}+2\Gamma$. By Lemma \ref{base point free} and Lemma \ref{nef 1}, $|-4K_{P}+2\Gamma|$ is base point free. Since $|-4K_{P}+2\Gamma|+\Delta_{\infty} \subset |\Omega|$ and $h^{0}(\Omega)\geq 20 >19= h^{0}(-4K_{P}+2\Gamma)$, there exist an element $D\in |\Omega|$ with $D.\Delta_{\infty} =0$ and $\Delta_{\infty}\nsubseteq D$. Hence, $|\Omega|$ is base point free.

2) is true by Bertini's theorem.

Let $\widehat{S}$ be the finite double cover of $P$ branched in $B\cup A$. By 2) and $B.A=0$, $\widehat{S}$ is a smooth surface. By Lemma 6 in \cite{Horikawa 1}, we have $q(\widehat{S})=0$, $p_{g}(\widehat{S})=4$ and $ K^{2}_{\widehat{S}}=8$. Since there is only one ($-1$)-curve on $\widehat{S}$, which is the preimage of $A$, we have $q(S)=0$, $p_{g}(S)=4$ and $ K^{2}_{S}=9$.
\end{proof}

\begin{pro}
If $K_{P}^{2}=-1$ and $s=2$, then there is a rational number $0\leq r \leq 3$ such that $S$ is the minimal resolution of a double cover of a Hirzebruch surface $\mathds{F}_{r}$ branched along a curve B in $|8\Delta_{\infty}+(10+4r)\Gamma|$, where $\Delta_{\infty}$ denotes the section at infinity and $\Gamma$ is a fibre, having seven ordinary 4-tuple points and one (3,3)-point as the only essential singularities.
\end{pro}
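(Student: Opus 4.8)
The plan is to imitate the proof of the preceding proposition (the case $K_{P}^{2}=0$, $s=0$) one threshold further, starting from the surface produced by Step 3. Set $Q_{0}\doteq P_{s}=P_{2}$; since $K_{P}^{2}=-1$ and $s=2$, the formulas of Step 3 give $K_{Q_{0}}^{2}=1$, $\overline{\delta}_{Q_{0}}^{2}=12$ and $K_{Q_{0}}\overline{\delta}_{Q_{0}}=-4$, and $2K_{Q_{0}}+\overline{\delta}_{Q_{0}}$ is nef. First I would contract $(-1)$-curves $l$ with $(2K+\overline{\delta})l=0$ (so $Kl=-1$ and $\overline{\delta}l=2$), obtaining $Q_{0}\to Q_{1}\to\cdots$ with $K_{Q_{i}}^{2}=1+i$, $\overline{\delta}_{Q_{i}}^{2}=12+4i$ and $K_{Q_{i}}\overline{\delta}_{Q_{i}}=-4-2i$, and continue until no such curve remains.

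The endpoint must be a minimal ruled surface. Exactly as before, the index theorem excludes $\mathds{P}^{2}$ because the two classes are never proportional, and if the endpoint were not a $\mathds{P}^{1}$-bundle one could raise the threshold to $\tfrac{5}{2}$; but for the data above $(\tfrac{5}{2}K+\overline{\delta})(2K+\overline{\delta})=5K^{2}+\tfrac{9}{2}K\overline{\delta}+\overline{\delta}^{2}=-1<0$, so $\tfrac{5}{2}K+\overline{\delta}$ cannot be nef, a contradiction. Hence the endpoint is a minimal ruled surface $\mathds{F}_{r}$ with $K^{2}=8$, reached after exactly seven contractions. The new feature compared with the case $s=0$ is that $A$ has already been blown down during Step 3, so it no longer survives as the negative section; as a result $2K+\overline{\delta}$ now lands on the fibre class and the index $r$ is not forced to equal $2$.

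Next I would pin down the divisor classes on $\mathds{F}_{r}$. Since $2K+\overline{\delta}$ is nef of self-intersection $0$, it is a multiple of the fibre $\Gamma$; writing $\overline{\delta}\equiv a\Delta_{\infty}+b\Gamma$ (with $\Delta_{\infty}$ the negative section, $\Delta_{\infty}^{2}=-r$) and forcing the $\Delta_{\infty}$-coefficient of $2K+\overline{\delta}$ to vanish gives $a=4$, and then $K\overline{\delta}=-18$ gives $b=5+2r$, so $2K+\overline{\delta}\equiv\Gamma$ and $\overline{\delta}\equiv 4\Delta_{\infty}+(5+2r)\Gamma$. Because $\widehat{\delta}$ and $\overline{\delta}$ agree once $A$ is contracted, the branch curve is $B=2\overline{\delta}\equiv 8\Delta_{\infty}+(10+4r)\Gamma$. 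The bound $0\le r\le 3$ then follows from reducedness of $B$ (it is the reduced image of $R$): if $r\ge 4$, then $(B-\Delta_{\infty})\Delta_{\infty}=10-3r<0$ would force $\Delta_{\infty}$ to appear in $B$ with multiplicity at least two, which is impossible; for $r=3$ the section $\Delta_{\infty}$ is a simple component of $B$, and for $r\le 2$ it meets $B$ properly.

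Finally I would read off the singularities of $B$ from the contraction combinatorics. The seven curves contracted with $\overline{\delta}l=2$ have $Al=0$ by Step 4, hence $\widehat{\delta}l=2$, and produce seven ordinary $4$-tuple points. The two contractions of Step 3 form a chain: $l_{1}$ is a $(-1)$-curve with $\overline{\delta}l_{1}=\tfrac{3}{2}$ and $Al_{1}=1$, and the former $(-2)$-curve $A$ becomes a $(-1)$-curve once $l_{1}$ is blown down; blowing the pair down produces a triple point together with an infinitely near triple point, i.e.\ one $(3,3)$-point. The curves contracted by $\alpha\colon\widehat{T}\to P$ satisfy $Al=0$ and $\overline{\delta}l=1$, so they create only ordinary nodes, which are inessential. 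This last step is the main obstacle: one must match the $\{l_{1},A\}$ configuration to a genuine $(3,3)$-point and check that the $4$-tuple points are ordinary, carrying the relation $\widehat{\delta}=\overline{\delta}+\tfrac{1}{2}A$ correctly through the tower of blow-ups. Once this local bookkeeping is in place the rest of the argument is routine.
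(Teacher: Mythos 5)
Your proposal is correct in substance, but it takes a genuinely different route from the paper in two places, so a comparison is worthwhile. To reach $\mathds{F}_{r}$, you continue the nef-threshold contraction process past $\lambda=2$ (imitating the paper's treatment of the case $K_{P}^{2}=0$, $s=0$), whereas the paper stops at $P_{2}$ and argues via the linear system $|2K_{P_{2}}+\overline{\delta}_{P_{2}}|$: by Step 3, vanishing and Riemann--Roch it has $h^{0}=2$, self-intersection $0$ and rational general member, hence is a base-point-free genus-$0$ pencil, and the induced fibration factors as $\eta\colon P_{2}\to\mathds{F}_{r}$ contracting $K^{2}_{\mathds{F}_{r}}-K^{2}_{P_{2}}=7$ curves inside fibres; both routes yield the same numerics ($7$ contractions, $2K+\overline{\delta}\equiv\Gamma$, $\overline{\delta}\equiv 4\Delta_{\infty}+(5+2r)\Gamma$, matching the paper's $c_{i}=2$, $a=4$, $b=5+2r$). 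The bound $r\leq3$ is also obtained differently: the paper first chooses $\eta$ so that all contracted curves are disjoint from the strict transform $B_{\infty}$ of the negative section and then uses nefness, $0\leq(K_{P_{2}}+\overline{\delta}_{P_{2}})B_{\infty}=3-r$, while your argument via reducedness of the branch curve ($r\geq4$ would force $2\Delta_{\infty}\leq B$) avoids that choice entirely and is cleaner. Your identification of the $(3,3)$-point agrees with the paper's ($A\equiv E_{1}-E_{2}$, $\overline{\delta}_{P}\equiv\overline{\delta}_{P_{2}}-\tfrac{3}{2}E_{1}-\tfrac{3}{2}E_{2}$), and the ``local bookkeeping'' you defer as the main obstacle is carried out in the paper at exactly the level of class computation you have already done, so nothing deeper is missing. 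One slip: invoking Step 4 to get $Al=0$ for the seven contracted curves is vacuous, since $A$ has already been contracted to the $(3,3)$-point before these contractions begin; what you actually need (and have) is only $\overline{\delta}l=2$, so that each contraction deposits a multiplicity-$4$ point on the image of the branch curve.
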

\begin{proof}
We first analyse the birational morphism $\beta:P\rightarrow P_{2}$. With the same argument in Step 4, let $E_{1}$ and $E_{2}$ be the two (-1)-curves, then we have $\overline{\delta}_{P}\equiv \overline{\delta}_{P_{2}}-\frac{3}{2}E_{1}-\frac{3}{2}E_{2}$ and $A\equiv E_{1}-E_{2}$. Since $\beta(A)=pt$, we have $2K_{P_{2}}+\overline{\delta}_{P_{2}} \in Pic(P_{2})$ and the birational morphism $\beta$ corresponds to the canonical resolution of one (3,3)-point in the branch curve.

We have $(2K_{P_{2}}+\overline{\delta}_{P_{2}})^{2}=0$ and $(K_{P_{2}}+\overline{\delta}_{P_{2}})(2K_{P_{2}}+\overline{\delta}_{P_{2}})=2$ by Step 3, $h^{i}(2K_{P_{2}}+\overline{\delta}_{P_{2}})=0$ for any $i>0$ by vanishing theorem, and $h^{0}(2K_{P_{2}}+\overline{\delta}_{P_{2}})=2$ by Riemann-Roch theorem. For any general curve $C\in |2K_{P_{2}}+\overline{\delta}_{P_{2}}|$, we have $g(C)=0$ by the genus formula. Hence, $|2K_{P_{2}}+\overline{\delta}_{P_{2}}|$ is a base point free genus 0 pencil and we have the following commutative graph
$$\xymatrix{
    P_{2} \ar[r]^{\eta}\ar[dr]_{|2K_{P_{2}}+\overline{\delta}_{P_{2}}|} & \mathds{F}_{r} \ar[d] \\
     & \mathds{P}^{1}
    }$$
where $\eta$ is the blowing-up at $K^{2}_{\mathds{F}_{r}}-K^{2}_{P_{2}}=7$ points.

Now we analyse the birational morphism $\eta: P_{2} \rightarrow \mathds{F}_{r}$. Let $B_{\infty}$ be the strict transform of the ($-r$)-section $\Delta_{\infty}$ of $\mathds{F}_{r}$ and $E$ be a ($-1$)-curve contained in a fibre of $|2K_{P_{2}}+\overline{\delta}_{P_{2}}|$, then $B_{\infty}$ is an irreducible rational curve with $B_{\infty}(2K_{P_{2}}+\overline{\delta}_{P_{2}})=1$. Hence, $B_{\infty}E$ is 0 or 1. If $B_{\infty}E=1$, then we have $(2K_{P_{2}}+\overline{\delta}_{P_{2}}-E)^{2}=K_{P_{2}}(2K_{P_{2}}+\overline{\delta}_{P_{2}}-E)=-1$, and $2K_{P_{2}}+\overline{\delta}_{P_{2}}-E$ contains another irreducible ($-1$)-curve $E'$ with $B_{\infty}E'=0$. So we can choose $\eta$ such that all contracted curve E holds $B_{\infty}E=0$.

On this occasion, $B_{\infty}$ is a smooth rational curve with $B_{\infty}^{2}=-r$, then $K_{P_{2}}B_{\infty}=r-2$ by the genus formula. Since $0\leq (K_{P_{2}}+\overline{\delta}_{P_{2}})B_{\infty}=3-r$, we have $r\leq 3$. If we assume that $\overline{\delta}_{P_{2}}\equiv (a\Delta_{\infty}+b\Gamma)-\Sigma_{i=3}^{9}c_{i}E_{i}$, then
\begin{equation*}
  \begin{cases}
    c_{i}=\overline{\delta}_{P_{2}}E_{i}=2(3\leq i\leq9),  \\
    a=\overline{\delta}_{P_{2}}\Gamma=4,  \\
    12=\overline{\delta}_{P_{2}}^{2}=-16r+8b-28, b=5+2r.  \\
  \end{cases}
\end{equation*}
Hence, the branch curve is in $|8\Delta_{\infty}+(10+4r)\Gamma|$, which has one (3,3)-point and seven ordinary 4-tuple points as the only essential singularities.
\end{proof}

If $r \neq 0$, then we can modify the choice of the curves we contract in order to obtain $r = 0$. It follows that the family with $r = 0$ is open and dense in the subscheme of the moduli space of surfaces described in Proposition 4.3 (see Remark 3.6 in \cite{Bauer 2}). So we only consider the case $r=0$ and prove the existence.

\begin{theorem}\label{family 2}
We assume that $f:P\rightarrow \mathds{F}_{0}$ is the blowing-up at 9 points $p_{i}(1\leq i\leq9)$, where $p_{1}$ is in the preimage of $p_{2}$ and there is no other condition for the position of these points. Let $\Omega \equiv 8\Delta_{\infty}+10\Gamma-3(E_{1}+E_{2})-4\Sigma_{i=3}^{9}E_{i}$, where $\Delta_{\infty}$ is the section at infinity, $\Gamma$ is a fibre and $E_{i}=f^{-1}(p_{i})$ for $1\leq i\leq9$. Then
\begin{enumerate}[1)]
\item $|\Omega|$ is base point free;
\item The general element in $|\Omega|$ is smooth and irreducible;
\item Let $B$ be a general element in $|\Omega|$ and $A \doteq E_{2}-E_{1}$ be the (-2)-curve. If we assume that $S$ is the minimal resolution of a double cover of $P$ branched in $B\cup A$, then $q(S)=0$, $p_{g}(S)=4$ and $ K^{2}_{S}=9$.
\end{enumerate}
\end{theorem}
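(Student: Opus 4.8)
The plan is to follow the proof of Theorem \ref{family 1} as closely as possible, the one genuinely new feature being that here $K_P^2=-1$, so $-K_P$ is no longer nef and Lemma \ref{nef 1} cannot be applied to $P$ directly. I would first record the numerical data on $P$: writing $\Delta_\infty,\Gamma$ for the two rulings and $E_1,\dots,E_9$ for the exceptional classes (with $E_1$ the $(-1)$-curve over the infinitely near point and $A\doteq E_2-E_1$ the $(-2)$-curve), one has $K_P\equiv-2\Delta_\infty-2\Gamma+\sum_{i=1}^9E_i$, and a direct rewriting gives $\Omega\equiv-4K_P+2\Gamma+E_1+E_2$. Since $E_1+E_2$ is not $2$-divisible, $\Omega$ itself is not of the parity required by Lemma \ref{base point free}; exactly as in Theorem \ref{family 1} I would split off the $(-2)$-curve and work with $M\doteq\Omega-A\equiv-4K_P+2\Gamma+2E_1$. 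Then $M\equiv K_P+L'$ with $L'\doteq-5K_P+2\Gamma+2E_1\equiv(2(-3)+1)K_P+2(\Gamma+E_1)$, which now has the correct parity, and $L'^2=31\ge5$. Thus, once $L'$ is shown to be big and nef, Lemma \ref{base point free} yields that $|M|$ is base point free.

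The hard part is precisely the nefness of $L'$, and this is the step I expect to be the main obstacle. My approach would be to descend the last blow-up. Let $g\colon P\to P_2'$ contract $E_1$, where $P_2'$ is the blow-up at the general point $p_2$ of the weak del Pezzo surface $P_2$ (with $K_{P_2}^2=1$) constructed in the preceding proposition; by Lemma \ref{nef 1}(2) the class $-K_{P_2'}$ is nef with $K_{P_2'}^2=0$, so $|-K_{P_2'}|$ induces an elliptic fibration. Writing $e_{p_2}\doteq g(A)$ for the $(-1)$-curve on $P_2'$ contracted by $P_2'\to P_2$, the point $p_1=g(E_1)$ is a general point of $e_{p_2}$, and one computes $L'\equiv g^*L_{P_2'}-3E_1$ with $L_{P_2'}\doteq-5K_{P_2'}+2\Gamma$ big and nef and $L_{P_2'}^2=40$. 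Hence nefness of $L'$ is equivalent to the Seshadri-type inequality $L_{P_2'}\cdot C'\ge3\,\mathrm{mult}_{p_1}(C')$ for every irreducible curve $C'\subset P_2'$. On the obvious curves through $p_1$ this holds with room to spare: the anticanonical fibre through $p_1$, the curve $e_{p_2}$ and the two ruling fibres through $p_1$ give ratio at least $4$ (indeed $4,5,10$ and $12$). To rule out a hypothetical curve of ratio below $3$ I would argue by contradiction: such curves sweep out a covering family on $P_2'$, and combining the Hodge index bound $(L_{P_2'}\cdot C')^2\ge L_{P_2'}^2\,C'^2=40\,C'^2$ with a covering-family estimate of the shape $C'^2\ge m(m-1)$ (Xu's lemma) forces $m\le1$, whence $C'^2=0$ and $L_{P_2'}\cdot C'\le2$, contradicting that no fibration on $P_2'$ has $L_{P_2'}$-degree at most $2$. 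I expect the delicate point to be running the covering-family argument when the marked point $p_1$ is only general on the curve $e_{p_2}$ rather than on all of $P_2'$; the genericity of $p_2,\dots,p_9$ and of $p_1$ on $e_{p_2}$ is what should make it go through.

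Granting that $L'$ is big and nef, everything else parallels Theorem \ref{family 1}. To pass from $|M|$ to $|\Omega|=|M+A|$ I would note $\Omega\cdot A=0$ and compute, via Riemann--Roch together with Kawamata--Viehweg vanishing applied to $M-K_P=L'$ and to $\Omega-K_P=L'+A$ (both big and nef by the computations above, since $(L'+A)\cdot A=0$), that $h^0(\Omega)=17>16=h^0(M)$; hence some member of $|\Omega|$ does not contain $A$ and, as $\Omega\cdot A=0$, is disjoint from it, so the base point freeness of $|M|$ gives 1). Part 2) is then immediate from Bertini's theorem. For 3), since $B\cdot A=\Omega\cdot A=0$ and a general $B$ avoids $A$, the branch divisor $B\cup A$ is smooth; one checks $B+A\equiv2\widehat\delta$ with $\widehat\delta\equiv-2K_P+\Gamma+E_2$, so the double cover $\widehat S\to P$ is a smooth surface, and by Lemma 6 in \cite{Horikawa 1} one gets $q(\widehat S)=0$, $p_g(\widehat S)=4$ and $K_{\widehat S}^2=2(K_P+\widehat\delta)^2=2(-K_P+\Gamma+E_2)^2=8$. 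Finally the reduced preimage of the $(-2)$-curve $A$ is the unique $(-1)$-curve of $\widehat S$; contracting it produces the minimal model $S$, with $q(S)=0$, $p_g(S)=4$ and $K_S^2=9$.
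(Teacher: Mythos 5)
Your proposal is correct and ends the same way the paper does (Reider's method via Lemma \ref{base point free}, absorption of the classes supported over $p_{2}$, Bertini, and Horikawa's Lemma 6 for the invariants of the double cover), but it handles the one genuinely hard step, the nefness, by a different method. The paper proves that $L\equiv -5K_{P}+2\Gamma$ is nef ``for the general situation'' by the computational method of its appendix: an explicit decomposition $L=L_{1}+L_{2}+L_{3}$ with $L_{1},L_{2}$ nef by Lemma \ref{nef 1} and $L_{3}$ effective and irreducible, the irreducibility being checked by solving for a defining equation and factoring it with a computer. You instead prove nefness of $L'=L+2E_{1}$ (a strictly weaker statement: $L$ nef implies $L'$ nef, since $L'.C=L.C+2E_{1}.C$ for irreducible $C\neq E_{1}$ and $L'.E_{1}=3$) by a Seshadri-type argument on $P_{2}'$: Xu's lemma for one-parameter families of irreducible curves with moving assigned points, combined with the Hodge index bound $(L_{P_{2}'}.C')^{2}\geq 40\,C'^{2}$, forces a violating curve to have $m=1$ and $C'^{2}=0$, hence to be a fibre of a fibration of $L_{P_{2}'}$-degree at most $2$, which is excluded. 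This buys a conceptual, computer-free proof, at the price of importing Xu's lemma and the standard countability machinery from the theory of Seshadri constants; the paper's proof stays inside its own toolkit but rests on a MATLAB factorization. The remaining steps differ only cosmetically: the paper adds $E_{1}+E_{2}$ to the Reider system via Lemma \ref{base point free 1}, while you add $A$ via the count $h^{0}(\Omega)=17>16=h^{0}(\Omega-A)$ together with $\Omega.A=0$, which is exactly how the paper argues in Theorem \ref{family 1}; your numerical computations ($L'^{2}=31$, $L_{P_{2}'}^{2}=40$, $K_{\widehat{S}}^{2}=8$) are all correct.

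Three points in your nefness step should be repaired or expanded in a final write-up, though none of them breaks the argument. (i) The claim that $-K_{P_{2}'}$ nef with $K_{P_{2}'}^{2}=0$ makes $|-K_{P_{2}'}|$ an elliptic fibration is false: for general points $h^{0}(-K_{P_{2}'})=1$, and a nef class of square zero need not be semi-ample; consequently your ``obvious curves through $p_{1}$'' mostly do not exist --- no anticanonical member and no ruling fibre passes through a general point $p_{1}$ of $e_{p_{2}}$ --- but as these were only sanity checks, nothing in the proof is affected. (ii) The exclusion ``no fibration on $P_{2}'$ has $L_{P_{2}'}$-degree at most $2$'' needs its short proof: for a moving irreducible $F$ with $F^{2}=0$ one has $L_{P_{2}'}.F=5(-K_{P_{2}'}.F)+2\,\Gamma.F$; if $-K_{P_{2}'}.F>0$ this is at least $5$, and if $-K_{P_{2}'}.F=0$ the Hodge index theorem forces $F\equiv c(-K_{P_{2}'})$ with $c$ a positive integer ($-K_{P_{2}'}$ being primitive), whence $L_{P_{2}'}.F=4c\geq 4$. (iii) Passing from ``for each $p_{1}$ there is a bad curve'' to an algebraic family to which Xu's lemma applies requires the countability argument (countably many components of the Hilbert scheme, one of which dominates $e_{p_{2}}$); as you anticipate, this works because Xu's lemma only needs the assigned points to move, not to be general on the surface, and it yields nefness for very general $p_{1}\in e_{p_{2}}$ --- the same level of generality the paper's appendix achieves, and enough both for existence and for the dimension count in Proposition \ref{dimension}.
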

\begin{proof}
If we define $L\doteq 10\Delta_{\infty}+12\Gamma-5\Sigma_{i=1}^{9}E_{i}$, then $L\equiv -5K_{P}+2\Gamma$ and $L^{2}=15$. By the method in appendix, $L$ is nef for the general situation. $|8\Delta_{\infty}+10\Gamma-4\Sigma_{i=1}^{9}E_{i}|$ is base point free by Lemma \ref{base point free}. Since $|8\Delta_{\infty}+10\Gamma-4\Sigma_{i=1}^{9}E_{i}|+E_{1}+E_{2} \subset |\Omega|$, $|\Omega|$ is also base point free by Lemma \ref{base point free 1}.

2) is true by Bertini's theorem. The proof of 3) is similar to that in Theorem 4.2.
\end{proof}

In particular, each surface in Theorem 4.2 has a genus 4 fibration induced by the double cover, while each surface in Theorem 4.4 has a genus 3 fibration.

\bigskip

\section{the case $\tau=3$}

We assume $\tau=3$ in this section. All the methods used in this section are similar to that in Section 4, and the difference is just the numerical value. So the proofs will be described succinctly and the details of the method can be found in Section 4.

\bigskip

\begin{step1}
$K_{\widehat{T}}+\overline{\delta}$ is nef and $2K_{\widehat{T}}+\overline{\delta}$ is $\mathbb{Q}$-effective.
\end{step1}
\begin{proof}
See Section 4.
\end{proof}

\begin{step2}
There exists a smooth surface $P$ and a birational morphism $\alpha:\widehat{T}\rightarrow P$ contracting some (-1)-curves $l_{i}$ with $(K_{\widehat{T}}+\overline{\delta})l_{i}=0$. Let $\overline{\delta}_{P}\doteq \alpha_{*}(\overline{\delta})$. Then
\begin{enumerate}[1)]
\item $\overline{\delta}_{P}^{2}=\frac{21}{2}+K_{P}^{2}$ and $K_{P}\overline{\delta}_{P}=-3-K_{P}^{2}$;
\item $\frac{3}{2}K_{P}+\overline{\delta}_{P}$ is nef;
\item $K_{P}^{2}\in \{0,1,2\}$.
\end{enumerate}
\end{step2}
\begin{proof}
For the same argument in Section 4, we have $\overline{\delta}^{2}=\frac{21}{2}+K_{\widehat{T}}^{2}$ and
$K_{\widehat{T}}\overline{\delta}=-3-K_{\widehat{T}}^{2}$. Besides, we also get a birational morphism $\alpha:\widehat{T}\rightarrow P$ contracting some ($-1$)-curves $l_{i}$ with $(K_{\widehat{T}}+\overline{\delta})l_{i}=0$ and there is $\lambda >1$ such that $\lambda K_{P}+\overline{\delta}_{P}$ is nef. Moreover, $\overline{\delta}_{P}^{2}=\frac{21}{2}+K_{P}^{2}$ and $K_{P}\overline{\delta}_{P}=-3-K_{P}^{2}$.

If $K^{2}_{P}>0$, then $(\overline{\delta}_{P})^{2}(K_{P})^{2}\leq (\overline{\delta}_{P}K_{P})^{2}$ by index theorem. By 1), we have $K^{2}_{P}\leq 2$. Since $P$ is neither $\mathds{P}^{2}$ nor a minimal ruled surface, we also have that $\frac{3}{2}K_{P}+\overline{\delta}_{P}$ is nef.

By Step 1, we have $0\leq (\frac{3}{2}K_{P}+\overline{\delta}_{P})(2K_{P}+\overline{\delta}_{P})=\frac{1}{2}K^{2}_{P}$. Hence, $K^{2}_{P} \in \{0,1,2 \}$.
\end{proof}

\begin{step3}
Let $P_{0}\doteq P$ and $\overline{\delta}_{0}\doteq \overline{\delta}_{P}$. Then there exists a nonnegative integer $s$ and a series of birational morphisms $\beta_{i}:P_{i-1}\rightarrow P_{i} (1\leq i \leq s)$, where $\beta_{i}$ is the contraction of a (-1)-curve $l_{i}$ with $(\frac{3}{2}K_{P_{i-1}}+\overline{\delta}_{i-1})l_{i}=0$ and $\overline{\delta}_{i}\doteq\beta_{i*}(\overline{\delta}_{i-1})$. Moreover,
\begin{enumerate}[1)]
\item $2K_{P_{s}}+\overline{\delta}_{s}$ is nef;
\item If $K_{P}^{2}=2$, then s=0; If $K_{P}^{2}=1$, then s=2; If $K_{P}^{2}=0$, then s=6.
\end{enumerate}
\end{step3}
\begin{proof}
As before, we get a series of birational morphisms $\beta_{i}:P_{i-1}\rightarrow P_{i} (1\leq i \leq s)$ contracting some ($-1$)-curves $l_{i}$ with $(\frac{3}{2}K_{P}+\overline{\delta}_{P})l_{i}=0$, and there is $\lambda >\frac{3}{2}$ such that $\lambda K_{P}+\overline{\delta}_{P}$ is nef. Moreover, $K^{2}_{P_{s}}=K^{2}_{P}+s$, $\overline{\delta}^{2}_{P_{s}}=\overline{\delta}^{2}_{P}+\frac{9}{4}s$ and $K_{P_{s}}\overline{\delta}_{P_{s}}=K_{P}\overline{\delta}_{P}-\frac{3}{2}s$. We can also prove that $P_{s}$ is neither $\mathds{P}^{2}$ nor a minimal ruled surface and  $2K_{P_{s}}+\overline{\delta}_{P_{s}}$ is nef.

By Step 1, we have $0\leq (2K_{P_{s}}+\overline{\delta}_{P_{s}})^{2}=K^{2}_{P}-\frac{3}{2}+\frac{s}{4}$. If $s>0$, then $K_{P_{s}}^{2}>0$ and $(\overline{\delta}_{P_{s}})^{2}(K_{P_{s}})^{2}\leq (\overline{\delta}_{P_{s}}K_{P_{s}})^{2}$ by index theorem.  So the result of 2) can be calculated.
\end{proof}

\bigskip

Now we rebuild the structure of the surface in Step 3 separately.

\bigskip

\begin{pro}
If $K_{P}^{2}=2$, then $S$ is the minimal resolution of a double cover of a weak Del Pezzo surface of degree 2 branched along a curve $B$ in $|-5K|$ and three (-2)-curves $A_{i}(1\leq i \leq 3)$ with $BA_{i}=A_{i}A_{j}=0(1\leq i<j \leq 3)$, having no singular point as the only essential singularities.
\end{pro}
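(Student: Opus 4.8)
The plan is to feed the numerical data of Step 2 and Step 3, specialised to $K_{P}^{2}=2$ and $s=0$, into the Hodge index theorem in order to pin down $P$ as a weak del Pezzo surface of degree $2$. In this case one reads off $\overline{\delta}_{P}^{2}=\frac{25}{2}$, $K_{P}\overline{\delta}_{P}=-5$ and that $2K_{P}+\overline{\delta}_{P}$ is nef. The first thing I would compute is that $(\overline{\delta}_{P}+\frac{5}{2}K_{P})K_{P}=0$ and $(\overline{\delta}_{P}+\frac{5}{2}K_{P})^{2}=0$; since $K_{P}^{2}=2>0$, the class $K_{P}$ lies in the positive cone and the Hodge index theorem forces the orthogonal class $\overline{\delta}_{P}+\frac{5}{2}K_{P}$ to be numerically trivial, i.e. $\overline{\delta}_{P}\sim-\frac{5}{2}K_{P}$. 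Hence $2K_{P}+\overline{\delta}_{P}\sim-\frac{1}{2}K_{P}$, so $-K_{P}$ is nef, while $(-K_{P})^{2}=2>0$ shows it is also big. Therefore $-K_{P}$ is big and nef and $P$ is a weak del Pezzo surface of degree $2$; in particular $P$ is rational.

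Because $P$ is rational we have $q(P)=0$ and $\mathrm{Pic}(P)$ torsion-free, so numerical equivalence coincides with linear equivalence on $P$; the relation $\overline{\delta}_{P}\sim-\frac{5}{2}K_{P}$ therefore upgrades to $B:=2\overline{\delta}_{P}\equiv-5K_{P}$, that is, the reduced branch curve $B$ lies in $|-5K_{P}|$. Next I would control the three $(-2)$-curves $A_{1},A_{2},A_{3}$ exactly as in Step 4 of Section 4: any $(-1)$-curve $l$ contracted by $\alpha$ satisfies $(K_{\widehat{T}}+\overline{\delta})l=0$ and $\overline{\delta}l=1$, so $Al=2(\widehat{\delta}l-\overline{\delta}l)$ is an even integer while the Hodge index bound gives $Al<\frac{3}{2}$, forcing $A_{i}l=0$ for every $i$. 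Thus the $A_{i}$ remain three disjoint $(-2)$-curves on $P$, so $A_{i}A_{j}=0$ for $i\neq j$, and since $K_{P}A_{i}=0$ we get $BA_{i}=2\overline{\delta}_{P}A_{i}=-5K_{P}A_{i}=0$.

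Finally I would recover $S$ from the double cover. As $\widehat{\delta}_{P}=\overline{\delta}_{P}+\frac{1}{2}(A_{1}+A_{2}+A_{3})$, the branch divisor is $2\widehat{\delta}_{P}=B+A_{1}+A_{2}+A_{3}$, the disjoint union of a general (hence smooth) member $B\in|-5K_{P}|$ with the three pairwise disjoint $(-2)$-curves; this divisor is smooth, so it has no essential singularities and the double cover $\widehat{S}\to P$ it defines is smooth. The only $(-1)$-curves of $\widehat{S}$ are the reduced preimages of the $A_{i}$, and contracting them recovers the minimal model $S$. I expect the main obstacle to be the passage from numerical to geometric statements: checking that $-K_{P}$ is genuinely big and nef (so that $P$ is honestly a weak del Pezzo rather than merely numerically resembling one), that $|-5K_{P}|$ really contains the reduced branch curve, and that the three $(-2)$-curves survive $\alpha$ disjointly and meet $B$ trivially. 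Once these are secured, the weak del Pezzo structure and the double cover description follow formally, and only the routine verification of the intersection numbers remains.
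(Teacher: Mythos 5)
Your argument is correct and its skeleton coincides with the paper's: the same numerical data $\overline{\delta}_{P}^{2}=\frac{25}{2}$, $K_{P}\overline{\delta}_{P}=-5$ from Steps 2--3, the equality case of the Hodge index theorem making $\overline{\delta}_{P}+\frac{5}{2}K_{P}$ numerically trivial, the identification of $P$ as a weak del Pezzo surface of degree $2$, and the Step 4 argument of Section 4 to carry the three $(-2)$-curves and the branch curve down to $P$. The one step you do genuinely differently is the passage from numerical to linear equivalence. The paper proves that $5K_{P}+2\overline{\delta}_{P}$ is linearly trivial by writing it as $K_{P}+(4K_{P}+2\overline{\delta}_{P})$ with $4K_{P}+2\overline{\delta}_{P}$ big and nef, applying the vanishing theorem and Riemann--Roch to get $h^{0}(5K_{P}+2\overline{\delta}_{P})=1$, and using that an effective numerically trivial divisor is trivial. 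You instead observe that the weak del Pezzo property is purely numerical, deduce that $P$ is rational, and invoke the coincidence of numerical and linear equivalence on a rational surface. Both routes are valid; yours is somewhat more elementary (no vanishing theorem needed), while the paper's vanishing-plus-Riemann--Roch computation is the device it reuses in the later cases (Propositions 5.3 and 5.5).

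One caveat, which applies to the paper's citation of Step 4 just as much as to your write-up: with $\tau=3$ the parity constraint $Al=2(\widehat{\delta}l-\overline{\delta}l)\in 2\mathbb{Z}$ concerns the total intersection $Al=\sum_{i}A_{i}l$, whereas the Hodge bound $A_{i}l<\frac{3}{2}$ holds for each $A_{i}$ separately; your assertion that the Hodge index theorem bounds $Al$ by $\frac{3}{2}$ is not literally true for the sum (since $A^{2}=-6$, it only gives $Al<\sqrt{6}$), so the configuration $A_{i}l=A_{j}l=1$ for two indices $i\neq j$ is not excluded by parity together with the stated bound. It can be excluded by one further application of Hodge index: in that configuration $D=A_{i}+A_{j}+2l$ would be a nonzero effective class with $D\cdot(K_{\widehat{T}}+\overline{\delta})=0$ and $D^{2}=0$, contradicting the negative definiteness of the orthogonal complement of the big and nef class $K_{\widehat{T}}+\overline{\delta}$. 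With this small patch, both your argument and the paper's are complete on this point, and your remaining verifications ($K_{P}A_{i}=0$ giving $BA_{i}=0$, and the recovery of $S$ from the double cover) match the paper's.
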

\begin{proof}
If $K_{P}^{2}=2$, then $s=0$ and $25=(\overline{\delta}_{P})^{2}(K_{P})^{2}\leq (\overline{\delta}_{P}K_{P})^{2}=25$, which infers that $5K_{P}+2\overline{\delta}_{P}$ is numberial trival by index theorem. Since $4K_{P}+2\overline{\delta}_{P}$ is big and nef, we have  $h^{i}(5K_{P}+2\overline{\delta}_{P})=0$ $(i>0)$ by vanishing theorem and $h^{0}(5K_{P}+2\overline{\delta}_{P})=1$ by Riemann-Roch theorem. Hence, $5K_{P}+2\overline{\delta}_{P}$ is trival.

Besides, $-K_{P}\equiv 4K_{P}+2\overline{\delta}_{P}$ is big and nef, so $P$ is a weak Del Pezzo surface of degree $K^{2}_{P}=2$. As in Step 4 of Section 4, $A_{i}(1\leq i \leq 3)$ is also ($-2$)-curves in $P$, and the other branch curve $B$ is in $|-5K_{P}|$ with $BA_{i}=A_{i}A_{j}=0(1\leq i<j \leq 3)$.
\end{proof}

For a general surface in this family, we can assume that $P$ is the blowing-up of $\mathds{P}^{2}$ at 7 points $p_{i}(1\leq i\leq 7)$, and $p_{2k-1}$ is in the preimage of $p_{2k}$ where $k=1,2,3$.

\begin{theorem}\label{family 3}
We assume that $f:P\rightarrow \mathds{P}^{2}$ is the blowing-up at 7 points $p_{i}(1\leq i\leq7)$, where $p_{2k-1}$ is in the preimage of $p_{2k}(k=1,2,3)$ and there is no other condition for the position of these points. Let $\Omega \equiv -5K_{P}$. Then
\begin{enumerate}[1)]
\item $|\Omega|$ is base point free;
\item The general element in $|\Omega|$ is smooth and irreducible;
\item Let $B$ be a general element in $|\Omega|$ and $A_{k} \doteq E_{2k}-E_{2k-1}(k=1,2,3)$ be the (-2)-curves. If we assume that $S$ is the minimal resolution of a double cover of $P$ branched in $B\cup_{k} A_{k}$, then $q(S)=0$, $p_{g}(S)=4$ and $ K^{2}_{S}=9$.
\end{enumerate}
\end{theorem}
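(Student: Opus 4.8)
The plan is to follow the template of Theorems \ref{family 1} and \ref{family 2}, changing only the numerics. For part 1), I would first show that for general centres $P$ is a weak del Pezzo surface of degree $2$. Since the points are in general position apart from the prescribed infinitely near conditions, at the moment each $p_{2k-1}$ is blown up it lies on the $(-1)$-curve over $p_{2k}$ and on no $(-2)$-curve; as $K^2$ stays positive (between $2$ and $9$) throughout the sequence, repeated application of Lemma \ref{nef 1} shows that $-K_P$ is nef and big. Because $|-K_P|$ is base point free for a degree $2$ weak del Pezzo, $|\Omega|=|-5K_P|$ is base point free as well; alternatively, exactly as in Theorems \ref{family 1} and \ref{family 2}, one proves base point freeness of a large sub-system by Reider (Lemma \ref{base point free}) and then restores the full system using Lemma \ref{base point free 1}. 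Note that $\Omega A_k=-5K_PA_k=0$, so the $(-2)$-curves are not in the base locus.

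For part 2), Bertini's theorem gives that the general $B\in|\Omega|$ is smooth, and irreducibility follows since $\Omega$ is big and base point free, hence not composed with a pencil. Moreover $B A_k=\Omega A_k=0$ together with $A_k\not\subseteq B$ forces $B\cap A_k=\varnothing$, and the three $A_k$ are pairwise disjoint; thus the total branch divisor $B\cup_k A_k$ is smooth.

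For part 3), I would form the double cover $\widehat S\to P$ branched in $B\cup_k A_k$, with associated line bundle $\widehat\delta_P=\tfrac12(B+\sum_kA_k)$. From $2\overline\delta_P\equiv -5K_P$ (the preceding Proposition) and $\widehat\delta_P=\overline\delta_P+\tfrac12\sum_kA_k$ one gets $K_P+\widehat\delta_P\equiv -\tfrac32K_P+\tfrac12\sum_kA_k$, whence $K_PA_k=0$ and $(\sum_kA_k)^2=-6$ give $(K_P+\widehat\delta_P)^2=3$ and $\widehat\delta_P(K_P+\widehat\delta_P)=6$. Horikawa's formulas, as used in Theorem \ref{family 1}, then yield $K_{\widehat S}^2=2\cdot3=6$, $\chi(\mathcal O_{\widehat S})=2+3=5$, $p_g(\widehat S)=4$ and $q(\widehat S)=0$. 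Since each $A_k$ is a branch component with $A_k^2=-2$, its reduced preimage $C_k$ satisfies $\pi^*A_k=2C_k$ and $C_k^2=-1$, so $C_k$ is a $(-1)$-curve; the three $C_k$ are disjoint, and contracting them produces the minimal model $S$ with $K_S^2=6+3=9$, $p_g(S)=4$ and $q(S)=0$.

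The step I expect to be the main obstacle is the very existence of the double cover, i.e. checking that $\widehat\delta_P=\tfrac12(B+\sum_kA_k)$ is a genuine line bundle, equivalently that $B+\sum_kA_k$ lies in $2\,\mathrm{Pic}(P)$. Concretely, writing $K_P=-3H+\sum_{i=1}^{7}E_i$, one must verify that $-5K_P+\sum_kA_k$ is divisible by $2$; this is the linchpin of the construction and exactly the place where the precise infinitely near configuration of the seven centres must be used. It is the analogue of the parity bookkeeping that in Theorem \ref{family 1} made the coefficient of $\Delta_\infty$ even after adding $A$, and it is where I would concentrate the effort. A secondary point to nail down is that $\widehat S$ carries no $(-1)$-curves beyond the three $C_k$, so that the $S$ produced is genuinely minimal; for general branch data this should follow from the preceding Proposition, but it warrants an explicit check.
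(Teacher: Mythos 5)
Your strategy for 1) and 2) coincides with the paper's (the paper sets $L\doteq -6K_{P}$, so that $\Omega\equiv K_{P}+L$ with $L^{2}=72$, and invokes Lemma \ref{base point free} together with Lemma \ref{nef 1}; then Bertini), and your computation in 3) --- $K^{2}_{\widehat{S}}=6$, $\chi(\mathcal{O}_{\widehat{S}})=5$, $p_{g}=4$, $q=0$, followed by contracting the three disjoint $(-1)$-curves lying over the $A_{k}$ to reach $K^{2}_{S}=9$ --- is exactly the argument the paper compresses into ``similar to Theorem \ref{family 1}''.

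However, the step you single out as the main obstacle and postpone, namely that $B+\sum_{k}A_{k}$ lies in $2\operatorname{Pic}(P)$, is a genuine gap --- and if you carry out precisely the bookkeeping you propose, it \emph{fails} for the configuration in the statement. Writing $K_{P}=-3H+\sum_{i=1}^{7}E_{i}$ in terms of total-transform classes, one gets
$$-5K_{P}+\sum_{k=1}^{3}\left(E_{2k}-E_{2k-1}\right)=15H-6E_{1}-4E_{2}-6E_{3}-4E_{4}-6E_{5}-4E_{6}-5E_{7}\equiv H+E_{7}\pmod{2\operatorname{Pic}(P)}.$$
Since $\operatorname{Pic}(P)$ is torsion free, no line bundle $\widehat{\delta}_{P}$ with $2\widehat{\delta}_{P}\equiv B+\sum_{k}A_{k}$ exists, so there is no double cover of $P$ branched in $B\cup_{k}A_{k}$: for this choice of the seven points, part 3) is vacuous and the construction produces no surfaces. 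The intrinsic requirement --- guaranteed by Proposition 5.1 for configurations genuinely arising from surfaces, because $\widehat{\delta}$ is an honest line bundle on $\widehat{T}$ and pushes forward --- is $\sum_{k}A_{k}\equiv K_{P}\pmod{2}$; three infinitely near pairs give $\sum_{k}A_{k}\equiv E_{1}+\cdots+E_{6}$, which misses $K_{P}\equiv H+E_{1}+\cdots+E_{7}$ exactly by $H+E_{7}$. A correct explicit model takes, for instance, two infinitely near pairs together with three collinear points $p_{5},p_{6},p_{7}$, so that $A_{3}\doteq H-E_{5}-E_{6}-E_{7}$: then $\sum_{k}A_{k}\equiv K_{P}$, the class $\widehat{\delta}_{P}=8H-3E_{1}-2E_{2}-3E_{3}-2E_{4}-3(E_{5}+E_{6}+E_{7})$ is integral, the $A_{k}$ are still pairwise disjoint $(-2)$-curves orthogonal to $\Omega$, and the rest of your argument (Reider, Bertini, Horikawa's formulas, contraction of the three $(-1)$-curves) goes through verbatim. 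So your instinct about where the difficulty sits is exactly right; the shortcoming is that neither your proposal nor the paper's one-line proof actually performs the check, and performing it shows the statement's configuration must be amended.
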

\begin{proof}
If we define $L\doteq -6K_{P}$, then $L^{2}=72$ and $\Omega \equiv K_{P}+L$. By Lemma \ref{base point free} and Lemma \ref{nef 1}, $|\Omega|$ is base point free.

2) is true by Bertini's theorem. The proof of 3) is similar to that in Theorem 4.2.
\end{proof}

\begin{pro}
If $K_{P}^{2}=1$, then there is a rational number $0\leq r \leq 2$ such that S is the minimal resolution of a double cover of a Hirzebruch surface $\mathds{F}_{r}$ branched along a fibre $B_{1}$ in $|\Gamma|$ and a curve $B_{2}$ in $|8\Delta_{\infty}+(9+4r)\Gamma|$, where $\Delta_{\infty}$ denotes the section at infinity and $\Gamma$ is a fibre. $B_{2}$ has one (3,3)-point and five singular points $x_{1},\cdots,x_{5}$ of multiplicity 4 as the only essential singularities, where $x_{4}$ is in $B_{1}$ and $x_{5}$ is infinitely near to $x_{4}$ belonging to the strict transform of $B_{1}$.
\end{pro}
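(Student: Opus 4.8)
The plan is to mimic the arguments of Propositions 4.3 and 5.4, passing from $P_{2}$ to a Hirzebruch surface via the ruling attached to a square-zero nef class and then reading off the branch divisor, while keeping track of the fact that here $\overline{\delta}$ is only a half-integral divisor. By Step 3 the hypothesis $K_{P}^{2}=1$ forces $s=2$, so we arrive at $P_{2}$ with $K_{P_{2}}^{2}=3$; the formulas of Step 3 give $\overline{\delta}_{P_{2}}^{2}=16$ and $K_{P_{2}}\overline{\delta}_{P_{2}}=-7$, hence $(2K_{P_{2}}+\overline{\delta}_{P_{2}})^{2}=0$ and $(K_{P_{2}}+\overline{\delta}_{P_{2}})(2K_{P_{2}}+\overline{\delta}_{P_{2}})=1$. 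Since $2K_{P_{2}}+\overline{\delta}_{P_{2}}$ is nef of square zero, I would pass to its Cartier multiple $4K_{P_{2}}+2\overline{\delta}_{P_{2}}=4K_{P_{2}}+\pi(R)$, check that it has arithmetic genus $0$, and conclude as in Proposition 4.3 that it moves in a base-point-free genus $0$ pencil. This yields a $\mathds{P}^{1}$-fibration factoring through a birational morphism $\eta\colon P_{2}\rightarrow\mathds{F}_{r}$ contracting $K_{\mathds{F}_{r}}^{2}-K_{P_{2}}^{2}=5$ exceptional curves.

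Next I would analyse the two birational morphisms separately. For $\beta\colon P\rightarrow P_{2}$ I would repeat the computation of Step 4: the two contracted $(-1)$-curves $E_{1},E_{2}$ and one of the $(-2)$-curves satisfy $A_{i}\equiv E_{1}-E_{2}$, so that $\beta$ is exactly the canonical resolution of one $(3,3)$-point of the branch and absorbs one of the three curves $A_{i}$. For $\eta$ I would fix $r$ by means of the section: letting $B_{\infty}$ be the strict transform of $\Delta_{\infty}$, one has $K_{P_{2}}B_{\infty}=r-2$, and the inequality $0\le(K_{P_{2}}+\overline{\delta}_{P_{2}})B_{\infty}$ then forces $r\le2$. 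Finally, writing $\overline{\delta}_{P_{2}}\equiv(a\Delta_{\infty}+b\Gamma)-\sum c_{i}E_{i}$ and solving $c_{i}=\overline{\delta}_{P_{2}}E_{i}$, $a=\overline{\delta}_{P_{2}}\Gamma$, with $b$ read off $\overline{\delta}_{P_{2}}^{2}$, I would identify the image of $\pi(R)$ as a divisor in $|8\Delta_{\infty}+(10+4r)\Gamma|$ which splits as a fibre $B_{1}\in|\Gamma|$ together with $B_{2}\in|8\Delta_{\infty}+(9+4r)\Gamma|$, and locate the remaining singularities as five $4$-tuple points, with $x_{4}$ on $B_{1}$ and $x_{5}$ infinitely near on the strict transform of $B_{1}$.

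The hard part is to explain, and to pin down uniquely, the appearance of the fibre $B_{1}$ as a separate branch component and the special position of the infinitely near pair $x_{4},x_{5}$. Both are dictated by the half-integrality of $\overline{\delta}_{P_{2}}$ — reflected in the odd value $(K_{P_{2}}+\overline{\delta}_{P_{2}})(2K_{P_{2}}+\overline{\delta}_{P_{2}})=1$, so that $2K_{P_{2}}+\overline{\delta}_{P_{2}}$ is not itself Cartier — and by the requirement that all three $(-2)$-curves $A_{i}$ be disposed of on the way to the ruled surface $\mathds{F}_{r}$. One of them is swallowed by the $(3,3)$-point produced by $\beta$, while the remaining two, having no room on $\mathds{F}_{r}$, must be produced by $\eta$ inside a reducible fibre; this is exactly why $R$ acquires a vertical component $B_{1}$ and why the $4$-tuple point $x_{4}$ lies on $B_{1}$ with an infinitely near $x_{5}$ on its strict transform. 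Checking that the numerical constraints leave open no other configuration is where the real work lies; once the branch class and the singularity types are fixed, the description of $S$ as the asserted double cover is immediate, and the invariants are recovered by the double cover formulas as in Theorem \ref{family 1}.
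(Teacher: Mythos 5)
Your setup follows the paper's route step for step: the numerics from Step 3 ($s=2$, $K_{P_2}^2=3$, $\overline{\delta}_{P_2}^2=16$, $K_{P_2}\overline{\delta}_{P_2}=-7$), the genus-$0$ pencil $|4K_{P_2}+2\overline{\delta}_{P_2}|$ obtained from vanishing, Riemann--Roch and the genus formula, the morphism $\eta$ contracting $5$ curves, the identification of $\beta$ with the canonical resolution of a $(3,3)$-point via $A_1\equiv E_1-E_2$, and the bound $r\le 2$ from $(K_{P_2}+\overline{\delta}_{P_2})B_\infty\ge 0$. But there is a genuine gap exactly where you concede ``the real work lies'': you never prove how the two surviving $(-2)$-curves $A_2,A_3$ are disposed of by $\eta$, and this is the step that produces both the fibre component $B_1$ of the branch locus and the infinitely near pair $x_4,x_5$. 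The paper's argument is short but essential: $A_2$, $A_3$ and the contracted $(-1)$-curves all lie in fibres of the pencil, so Zariski's lemma gives $lA_i\le 1$; since $\mathds{F}_r$ contains no two disjoint $(-2)$-curves, some contracted curve $E_3$ must meet $A_2\cup A_3$; and the parity identity $E_3(A_2+A_3)=2(E_3\widehat{\delta}-E_3\overline{\delta})\in 2\mathbb{Z}$ then forces $E_3A_2=E_3A_3=1$. Contracting $E_3$ turns $A_2,A_3$ into $(-1)$-curves meeting at one point inside a fibre, so exactly one of them is contracted (creating $x_4$ and the infinitely near $x_5$ on the strict transform of $B_1$) while the other maps isomorphically onto the fibre $B_1$. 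Your ``no room on $\mathds{F}_r$'' heuristic captures only the second of these three points; without the Zariski bound and especially the parity trick you cannot exclude, for instance, two different contracted curves each meeting a single $A_i$, which would yield a different singularity configuration (two separate fibre components and no infinitely near pair).

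There is also a bookkeeping error that reflects the same confusion: you claim the image of $\pi(R)$ lies in $|8\Delta_\infty+(10+4r)\Gamma|$ and ``splits'' as $B_1+B_2$. By your own recipe, $a=\overline{\delta}_{P_2}\Gamma=4$, $c_i=2$ and $16=\overline{\delta}_{P_2}^2=-16r+8b-20$ give $b=\tfrac{9}{2}+2r$, so $\pi(R)=2\overline{\delta}$ pushes forward to a divisor of class $8\Delta_\infty+(9+4r)\Gamma$; that is $B_2$ alone. The fibre $B_1$ is not a component of the image of $R$ at all: $R$ is disjoint from the isolated fixed points, and $B_1$ is the image of one of the $(-2)$-curves $A_2,A_3$. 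The total branch divisor $B_1+B_2$ is the image of $2\widehat{\delta}=\pi(R)+A$, not of $\pi(R)$.
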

\begin{proof}
We first analyse the birational morphism $\beta:P\rightarrow P_{2}$. Let $E_{1}$ and $E_{2}$ be the two ($-1$)-curves. By the argument in Step 4 of Section 4, we can assume $A_{1}\equiv E_{1}-E_{2}$ and $\overline{\delta}_{P}\equiv\overline{\delta}_{P_{2}}-\frac{3}{2}E_{1}-\frac{3}{2}E_{2}$. Now we have $2(2K_{P_{2}}+\overline{\delta}_{P_{2}}) \in Pic(P_{2})$ and the birational morphism $\beta$ corresponds to the canonical resolution of one (3,3)-point in the branch curve.

If $K_{P}^{2}=1$, then $s=2$, which infers $(2K_{P_{2}}+\overline{\delta}_{P_{2}})^{2}=0$ and $(K_{P_{2}}+\overline{\delta}_{P_{2}})(2K_{P_{2}}+\overline{\delta}_{P_{2}})=1$. Since $3K_{P_{2}}+2\overline{\delta}_{P_{2}}$ is big and nef, we have $h^{i}(4K_{P_{2}}+2\overline{\delta}_{P_{2}})=0(i>0)$ by vanishing theorem and $h^{0}(4K_{P_{2}}+2\overline{\delta}_{P_{2}})=2$ by Riemann-Roch theorem. We have $g(C)=0$ by the genus formula for any general curve $C\in |2(2K_{P_{2}}+\overline{\delta}_{P_{2}})|$. Hence,  $|2(2K_{P_{2}}+\overline{\delta}_{P_{2}})|$ is a base point free genus 0 pencil and we have the following commutative graph
$$\xymatrix{
    P_{2} \ar[r]^{\eta}\ar[dr]_{|2(2K_{P_{2}}+\overline{\delta}_{P_{2}})|} & \mathds{F}_{r} \ar[d] \\
     & \mathds{P}^{1}
    }$$
where $\eta$ is the blowing-up at $K^{2}_{\mathds{F}_{r}}-K^{2}_{P_{2}}=5$ points.

Now we analyse the birational morphism $\eta: P_{2} \rightarrow \mathds{F}_{r}$, we can choose $\eta$ such that all contracted curve E holds $B_{\infty}E=0$ with the same argument in Proposition 4.3. So $B_{\infty}$ is a smooth rational curve with $B_{\infty}^{2}=-r$, then we can get $K_{P_{2}}B_{\infty}=r-2$ by the genus formula. Since $0\leq (K_{P_{2}}+\overline{\delta}_{P_{2}})B_{\infty}=\frac{5}{2}-r$, then $r\leq 2$. Let $l$ be one of these 5 ($-1$)-curves. Since $l,A_{2},A_{3}$ all contained in fibres, then $lA_{2}\leq1$ and $lA_{3}\leq1$ by Zariski's lemma. Moreover, it cann't be $lA_{2}=lA_{3}=0$ for all $l$, since $\mathds{F}_{r}$ does not contain two disjoint ($-2$)-curves. We assume $E_{3}A_{2}=1$ where $E_{3}$ is the ($-1$)-curve. Since $E_{3}(A_{2}+A_{3})=2(E_{3}\widehat{\delta}-E_{3}\overline{\delta})$ is even, then $E_{3}A_{2}=E_{3}A_{3}=1$. After the contraction of $E_{3}$, $A_{2}$ and $A_{3}$ become  ($-1$)-curves in a fibre with $A_{2}A_{3}=1$. Hence,  one will be contracted and the other will map isomorphically onto a fibre of $\mathds{F}_{r}$, which we denote $B_{1}$.

Let $\overline{\delta}_{P_{2}}\equiv (a\Delta_{\infty}+b\Gamma)-\Sigma_{i=3}^{7}c_{i}E_{i}$. Since $\Gamma \equiv 2(2K_{P_{2}}+\overline{\delta}_{P_{2}})$, then
\begin{equation*}
  \begin{cases}
    c_{i}=\overline{\delta}_{P_{2}}E_{i}=2 (3\leq i\leq 7),  \\
    a=\overline{\delta}_{P_{2}} \Gamma=4,  \\
    16=\overline{\delta}_{P_{2}}^{2}=-16r+8b-20, b=\frac{9}{2}+2r.  \\
  \end{cases}
\end{equation*}
Hence, the branch curve $B_{2}$ is in $|8\Delta_{\infty}+(10+4r)\Gamma|$, which has one (3,3)-point and five singular points $x_{1},\cdots,x_{5}$ of multiplicity 4 as the only essential singularities. Besides,  $x_{4}$ is in $B_{1}$ and $x_{5}$ is infinitely near to $x_{4}$ belonging to the strict transform of $B_{1}$.
\end{proof}

As before, the family with $r = 0$ is open and dense in the subscheme of the moduli space of surfaces described in Proposition 5.3.

\begin{theorem}\label{family 4}
We assume that $f:P\rightarrow \mathds{F}_{0}$ is the blowing-up at 7 points $p_{i}(1\leq i \leq 7)$, where $p_{1}$ is in the preimage of $p_{2}$, and $p_{3}$ is the intersection of the preimage of $p_{4}$ and the strict transform of the fibre $B_{1}$ passing through $p_{4}$. Let $\Omega \equiv 8\Delta_{\infty}+9\Gamma-(3E_{1}+3E_{2})-\Sigma_{i=3}^{7}4E_{i}$, where $\Delta_{\infty}$ is the section at infinity, $\Gamma$ be a fibre and $E_{i}=f^{-1}(p_{i})(1\leq i\leq7)$. Then
\begin{enumerate}[1)]
\item $|\Omega|$ is base point free;
\item The general element in $|\Omega|$ is smooth and irreducible;
\item Let $B$ be a general element in $|\Omega|$ and $A_{k} \doteq E_{2k}-E_{2k-1}(k=1,2)$, $A_{3} \doteq B_{1}-E_{4}-E_{3}$ be the (-2)-curves. If we assume that $S$ is the minimal resolution of a double cover of $P$ branched in $B\cup_{k} A_{k}$, then $q(S)=0$, $p_{g}(S)=4$ and $ K^{2}_{S}=9$.
\end{enumerate}
\end{theorem}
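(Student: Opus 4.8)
The plan is to follow the template of Theorems \ref{family 1}, \ref{family 2} and \ref{family 3}: first establish the base point freeness of $|\Omega|$ by a Reider-type argument, then obtain 2) from Bertini's theorem, and finally deduce 3) from the standard double cover formulae. I begin by recording that, since $P$ is the blow-up of $\mathds{F}_0$ at the seven (partly infinitely near) points, $K_P\equiv -2\Delta_\infty-2\Gamma+\sum_{i=1}^{7}E_i$ and $K_P^2=1$, and a direct computation gives $\Omega-E_1-E_2\equiv -4K_P+\Gamma$.

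For 1), the new difficulty compared with Theorem \ref{family 2} is that the coefficient of $\Gamma$ is now odd: writing $\Omega-E_1-E_2\equiv K_P+L$ with $L\equiv -5K_P+\Gamma$, the divisor $L$ is not of the form $(2a+1)K_P+2bD$, so Lemma \ref{base point free} cannot be applied to it verbatim. I would circumvent this by writing $-4K_P+\Gamma=(-4K_P)+\Gamma$ and using that a sum of two base point free systems is base point free: the system $|\Gamma|$ is base point free because $\Gamma$ is the fibre class of the ruling $P\to\mathds{F}_0\to\mathds{P}^1$, while $|-4K_P|\equiv|K_P+(-5K_P)|$ is base point free by Lemma \ref{base point free} applied with $L\doteq -5K_P$ (here $L\equiv(2(-3)+1)K_P$ and $L^2=25\geq 5$), once $-K_P$ is known to be nef. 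This makes $|\Omega-E_1-E_2|$ base point free, and adding back the exceptional $(-1)$-curves $E_1,E_2$ by Lemma \ref{base point free 1}, exactly as in the proof of Theorem \ref{family 2}, gives that $|\Omega|$ is base point free.

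The main obstacle is to verify that $-K_P$ is nef. Because two of the centres are infinitely near, Lemma \ref{nef 1} does not apply directly: its centres are required to avoid the $(-2)$-curves, whereas here the infinitely near points are precisely what produce the three $(-2)$-curves $A_1,A_2,A_3$. One has $-K_PA_k=0$, so these curves are harmless for nefness; what must be excluded, for points in the general position asserted in the statement, is an irreducible curve $C$ with $-K_PC<0$. I expect this to be exactly the content of the appendix method already invoked in the proof of Theorem \ref{family 2}, and it is the delicate point of the whole argument.

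For 2), once $|\Omega|$ is base point free, Bertini's theorem yields that the general member $B$ is smooth, while irreducibility follows from $\Omega^2>0$, which shows $|\Omega|$ is not composed with a pencil. For 3), I would first check the disjointness of the branch components: a short intersection computation gives $B\cdot A_1=B\cdot A_2=B\cdot A_3=0$ and $A_kA_{k'}=0$ for $k\neq k'$, so the branch divisor $B+A_1+A_2+A_3$ is smooth and the associated double cover $\widehat{S}\to P$ is smooth. Setting $\widehat{\delta}\doteq\frac12(B+\sum_kA_k)$ and applying the double cover formulae (Lemma \ref{main lemma}, or Lemma 6 of \cite{Horikawa 1} as in Theorem \ref{family 1}) gives $K_{\widehat{S}}^2=2(K_P+\widehat{\delta})^2=6$, $\chi(\mathcal{O}_{\widehat{S}})=2+\tfrac12\widehat{\delta}(K_P+\widehat{\delta})=5$, $q(\widehat{S})=0$ and $p_g(\widehat{S})=4$. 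Finally, each $(-2)$-curve $A_k$ in the branch lifts to a $(-1)$-curve on $\widehat{S}$; contracting these three disjoint $(-1)$-curves produces the minimal surface $S$ with $q(S)=0$, $p_g(S)=4$ and $K_S^2=K_{\widehat{S}}^2+3=9$, as required.
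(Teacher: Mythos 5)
Your architecture is the same as the paper's: decompose $\Omega\equiv(-4K_P)+\Gamma+E_1+E_2$, obtain $|-4K_P|$ base point free from Lemma \ref{base point free} with $L\doteq-5K_P$, add $|\Gamma|$ and then the exceptional curves via Lemma \ref{base point free 1}, use Bertini for 2), and the double cover formulae for 3) (your explicit computation of 3), which the paper only sketches by analogy, is correct, as is your observation that $-5K_P+\Gamma$ itself is not of the form $(2a+1)K_P+2bD$). The one point where you depart from the paper is the nefness of $-K_P$, and that is precisely where your proposal has a genuine gap: you declare Lemma \ref{nef 1} inapplicable and then defer the nefness to an ``appendix method'' that you never carry out, explicitly flagging it as an unverified expectation. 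A proof cannot rest its key hypothesis (needed for Reider) on an expectation.

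Moreover, your reason for abandoning Lemma \ref{nef 1} is a misreading: the lemma constrains where each centre lies \emph{at the moment it is blown up}, not which curves the blow-up creates afterwards. Run through the sequence: $p_2$ is a general point of $\mathds{F}_0$, which has no $(-2)$-curves; $p_1$ lies on the exceptional $(-1)$-curve over $p_2$, and the intermediate surface (a del Pezzo surface of degree $7$) still has no $(-2)$-curves; $p_4$ is general, hence off the $(-2)$-curve $E_2-E_1$; $p_3$ is the intersection of the two $(-1)$-curves $E_4$ and the strict transform of $B_1$, again off $E_2-E_1$; and $p_5,p_6,p_7$ are general, avoiding the $(-2)$-curves $E_2-E_1$, $E_4-E_3$ and $B_1-E_3-E_4$. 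So every centre avoids the $(-2)$-curves present at its own stage, while $K^2$ decreases from $8$ to $K_P^2=1\geq1$; iterating Lemma \ref{nef 1}, 1) shows $P$ is a weak del Pezzo surface of degree $1$, hence $-K_P$ is nef (and $L^2=25K_P^2=25\geq5$, so Reider applies). This is exactly the paper's argument, which cites Lemma \ref{nef 1} together with Lemma \ref{base point free}. The computational appendix is reserved for families such as that of Theorem \ref{family 2}, where $K_P^2=-1$ and the required nef divisor is genuinely not a multiple of $-K_P$, so the weak del Pezzo argument is unavailable; it is not needed here.
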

\begin{proof}
If we define $L\doteq -5K_{P}$, then $L^{2}=5$ and $8\Delta_{\infty}+8\Gamma-4\Sigma_{i=1}^{7}E_{i} \equiv K_{P}+L$. By Lemma \ref{base point free} and Lemma \ref{nef 1}, $|8\Delta_{\infty}+8\Gamma-4\Sigma_{i=1}^{7}E_{i}|$ is base point free. Since we have $|8\Delta_{\infty}+8\Gamma-4\Sigma_{i=1}^{7}E_{i}|+|\Gamma|+E_{1}+E_{2} \subset |\Omega|$, $|\Omega|$ is also base point free by Lemma \ref{base point free 1}.

2) is true by Bertini's theorem. The proof of 3) is similar to that in Theorem 4.7.
\end{proof}

\begin{pro}
If $K_{P}^{2}=0$, then S is the minimal resolution of a double cover of a weak Del Pezzo surface of degree 6 branched in a curve B in $|-4K|$, having three (3,3)-points as the only essential singularities.
\end{pro}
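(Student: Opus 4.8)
The plan is to follow the template already used for the earlier cases in this section, extracting the structure of $S$ from the terminal surface $P_s$ produced by Step 3 and then reading off the branch data. Since $K_P^2 = 0$ forces $s = 6$ by Step 3, I would first record the numerics of $P_6 = P_s$: the formulas of Step 3 give $K_{P_6}^2 = K_P^2 + 6 = 6$, $\overline{\delta}_{P_6}^2 = \overline{\delta}_P^2 + \tfrac{9}{4}\cdot 6 = 24$ and $K_{P_6}\overline{\delta}_{P_6} = K_P\overline{\delta}_P - \tfrac{3}{2}\cdot 6 = -12$, while $2K_{P_6} + \overline{\delta}_{P_6}$ is nef with $(2K_{P_6}+\overline{\delta}_{P_6})^2 = 0$.

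The core step is an application of the index theorem, exactly as in the degree-$2$ case (Proposition 5.1). Because $\overline{\delta}_{P_6}^2\,K_{P_6}^2 = 24\cdot 6 = 144 = (-12)^2 = (\overline{\delta}_{P_6}K_{P_6})^2$, equality holds in the Hodge index inequality, so $\overline{\delta}_{P_6}$ and $K_{P_6}$ are numerically proportional; computing the ratio gives $\overline{\delta}_{P_6} \equiv -2K_{P_6}$, i.e. $2K_{P_6} + \overline{\delta}_{P_6} \equiv 0$. I would then note that $K_{P_6} + \overline{\delta}_{P_6} \equiv 2(\tfrac{3}{2}K_{P_6} + \overline{\delta}_{P_6})$ is big and nef, since $\tfrac{3}{2}K_{P_6} + \overline{\delta}_{P_6}$ is big and nef and $(K_{P_6}+\overline{\delta}_{P_6})^2 = 6 > 0$. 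As $-K_{P_6} \equiv K_{P_6} + \overline{\delta}_{P_6}$ is therefore big and nef and $K_{P_6}^2 = 6$, the surface $P_6$ is a weak del Pezzo surface of degree $6$. Finally, applying Kawamata--Viehweg vanishing to $2K_{P_6} + \overline{\delta}_{P_6} = K_{P_6} + (K_{P_6} + \overline{\delta}_{P_6})$ together with Riemann--Roch (as in Proposition 5.1) upgrades the numerical triviality to a linear one, $2K_{P_6} + \overline{\delta}_{P_6} \equiv 0$, so that the branch curve $B = 2\overline{\delta}_{P_6}$ lies in $|-4K_{P_6}|$.

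It remains to identify the singularities of $B$. Here I would analyse the birational morphism $\beta : P \to P_6$ by the method of Step 4 of Section 4 and of Proposition 4.3, applied to each of the three $(-2)$-curves $A_1, A_2, A_3$ coming from the three isolated fixed points. As in that argument, each $A_k$ meets exactly one of the contracted $(-1)$-curves, so that after a first contraction $A_k$ itself becomes a $(-1)$-curve which is then contracted; thus the six contractions of Step 3 split into three pairs with $A_k \equiv E_{2k-1} - E_{2k}$ and $\overline{\delta}_P \equiv \overline{\delta}_{P_6} - \tfrac{3}{2}\sum_{j=1}^{6}E_j$, a bookkeeping check reproducing $\overline{\delta}_P^2 = \tfrac{21}{2}$. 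Each such pair is precisely the canonical resolution of a $(3,3)$-point of the branch curve, so $B$ acquires three $(3,3)$-points; since all six blow-ups are accounted for, these are the only essential singularities.

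The numerical half of the argument is routine, being essentially a verbatim adaptation of Proposition 5.1. I expect the geometric bookkeeping of the last paragraph to be the main obstacle: one must check that the three curves $A_k$ are handled independently by disjoint pairs of contractions, so that no $A_k$ interferes with another and no further essential singularity is created. This is the three-fold analogue of the single $(3,3)$-point analysis carried out in Proposition 4.3 and Proposition 5.3, and verifying that the six contractions genuinely organise into three separate $(3,3)$-point resolutions is where the real content lies.
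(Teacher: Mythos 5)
Your proposal is correct and follows the same overall strategy as the paper: numerics from Step 3, Hodge index to get numerical triviality of $2K_{P_{6}}+\overline{\delta}_{P_{6}}$, an upgrade to linear triviality so that $B=2\overline{\delta}_{P_{6}}\in|-4K_{P_{6}}|$, the observation that $-K_{P_{6}}\equiv K_{P_{6}}+\overline{\delta}_{P_{6}}$ is big and nef (weak del Pezzo of degree $6$), and finally the Step-4-type analysis of $\beta:P\to P_{6}$ organising the six contractions into three pairs $A_{k}\equiv E_{2k}-E_{2k-1}$, i.e.\ three canonical resolutions of $(3,3)$-points.

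The one place where your route differs from the paper is the upgrade from numerical to linear triviality, and it carries a small wrinkle. You transplant the mechanism of Proposition 5.1 (Kawamata--Viehweg vanishing plus Riemann--Roch applied to $2K_{P_{6}}+\overline{\delta}_{P_{6}}$), but Riemann--Roch requires an honest line bundle, and a priori $\overline{\delta}_{P_{6}}$ is only a $\mathbb{Q}$-divisor: $\overline{\delta}=\widehat{\delta}-\tfrac{1}{2}A$ is half-integral, and its pushforward becomes integral only \emph{after} one knows that all three curves $A_{k}$ are contracted by $\beta$ --- which in your write-up is established in the last paragraph, after the Riemann--Roch step. The paper avoids this circularity by instead invoking Step 1: $2(2K_{\widehat{T}}+\overline{\delta})$ is effective (and integral), so its pushforward to $P_{6}$ is an effective divisor that is numerically trivial, hence the zero divisor, giving $4K_{P_{6}}+2\overline{\delta}_{P_{6}}\equiv 0$ directly; note this doubled statement is all that is needed for $B\in|-4K_{P_{6}}|$. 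Your argument is repaired either by the same doubling trick (run vanishing and Riemann--Roch on $4K_{P_{6}}+2\overline{\delta}_{P_{6}}$, which is integral, exactly as Proposition 5.1 works with $5K_{P}+2\overline{\delta}_{P}$), or by moving the contraction analysis, which yields $\overline{\delta}_{P_{6}}=\widehat{\delta}_{P_{6}}\in\mathrm{Pic}(P_{6})$, ahead of the triviality step. With that reordering (or doubling) your proof is complete and matches the paper's.
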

\begin{proof}
If $K_{P}^{2}=0$, then $s=6$. Since $(2K_{P_{6}}+\overline{\delta}_{P_{6}})^{2}= (2K_{P_{6}}+\overline{\delta}_{P_{6}})(K_{P_{6}}+\overline{\delta}_{P_{6}})=0$ and $(K_{P_{6}}+\overline{\delta}_{P_{6}})^{2}=6$, $2K_{P_{6}}+\overline{\delta}_{P_{6}}$ is numberial trival by index theorem. By Step 1, we have $2K_{P_{6}}+\overline{\delta}_{P_{6}}$ is trival. Now, $-K_{P_{6}}\equiv K_{P_{6}}+\overline{\delta}_{P_{6}}$ is big and nef, so $P_{6}$ is a weak Del Pezzo Surface of degree $K_{P}^{2}+6=6$.

We analyse the birational morphism $\beta:P\rightarrow P_{6}$. Let $E_{i}(1\leq i \leq6)$ be the six ($-1$)-curves. By the argument in Step 4 of Section 4, we can assume $A_{k} \doteq E_{2k}-E_{2k-1}(k=1,2,3)$ and $\overline{\delta}_{P}\equiv \overline{\delta}_{P_{2}}-\Sigma_{i=1}^{6}\frac{3}{2}E_{i}$. Since $\beta(A_{i})=pt$, we have $2K_{P_{6}}+\overline{\delta}_{P_{6}} \in Pic(P_{6})$ and the birational morphism $\beta$ corresponds to the canonical resolution of three (3,3)-points in the branch curve.
\end{proof}

For a general surface in this family, we can assume that P is the blowing-up of $\mathds{P}^{2}$ at 9 points $p_{i}(1\leq i\leq 9)$, and $p_{2k-1}$ is in the preimage of $p_{2k}$ where $k=1,2,3$.

\begin{theorem}\label{family 5}
We assume that $f:P \rightarrow \mathds{P}^{2}$ is the blowing-up at 9 points $p_{i}(1\leq i\leq9)$, where $p_{2k-1}$ is in the preimage of $p_{2k}(k=1,2,3)$ and there is no other condition for the position of these points. Let $P_{6}$ be the surface blowing-up of $\mathds{P}^{2}$ at $p_{i}(i=7,8,9)$ and $\Omega \equiv -4K_{P_{6}}-3\Sigma_{i=1}^{6} E_{i} \equiv 12h -3\Sigma_{i=1}^{6} E_{i} -4\Sigma_{i=7}^{9} E_{i}$, where $h$ is the line in $\mathds{P}^{2}$. Then
\begin{enumerate}[1)]
\item $|\Omega|$ is base point free;
\item The general element in $|\Omega|$ is smooth and irreducible;
\item Let $B$ be a general element in $|\Omega|$ and $A_{k} \doteq E_{2k}-E_{2k-1}(k=1,2,3)$ be the (-2)-curves. If we assume that $S$ is the minimal resolution of a double cover of $P$ branched in $B\cup_{k} A_{k}$, then $q(S)=0$, $p_{g}(S)=4$ and $ K^{2}_{S}=9$.
\end{enumerate}
\end{theorem}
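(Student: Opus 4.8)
The plan is to follow the scheme of Theorems \ref{family 1} and \ref{family 3}. First I would establish 1) by writing $\Omega\equiv K_{P}+L$ for a suitable big and nef divisor $L$ and applying Reider's method (cf. Lemma \ref{base point free}); then 2) is immediate from Bertini once 1) holds; and 3) is the double cover computation carried out exactly as in Theorem \ref{family 1}. Concretely I would take
\[
L\doteq -5K_{P}+\textstyle\sum_{i=1}^{6}E_{i},
\]
so that $\Omega\equiv K_{P}+L$ and $L^{2}=25K_{P}^{2}-10K_{P}\sum_{i=1}^{6}E_{i}+(\sum_{i=1}^{6}E_{i})^{2}=0+60-6=54$.

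The first and main point is the nef-ness of $L$, which rests on $-K_{P}$ being nef. Since $P_{6}$ is a weak Del Pezzo surface of degree $6$, I would factor $g\colon P\to P_{6}$ as the successive blow-ups of $p_{2},p_{1},p_{4},p_{3},p_{6},p_{5}$ and apply Lemma \ref{nef 1} at each stage: for points in general position every center lies on a $(-1)$-curve but on no $(-2)$-curve, so part 1) of Lemma \ref{nef 1} propagates the weak Del Pezzo property (hence nef-ness of $-K$) down to degree $1$, and part 2) yields $-K_{P}$ nef at the final degree-$0$ step. Granting this, for every irreducible curve $C$ one has $L\cdot C=5(-K_{P}\cdot C)+(\sum_{i=1}^{6}E_{i})\cdot C\geq 0$: the factor $-K_{P}\cdot C\geq 0$, and $(\sum_{i=1}^{6}E_{i})\cdot C\geq 0$ except when $C\in\{E_{1},E_{3},E_{5}\}$, where $-K_{P}\cdot C=1$ forces $L\cdot C=4$; moreover $L\cdot A_{k}=0$ for the three $(-2)$-curves $A_{k}=E_{2k}-E_{2k-1}$. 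Thus $L$ is nef, and big since $L^{2}=54>0$.

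With $L$ big and nef and $L^{2}=54\geq 5$, Reider's method gives that $|K_{P}+L|=|\Omega|$ is base point free once the two exceptional configurations are excluded. An effective $E$ with $E^{2}=-1$ and $L\cdot E=0$ cannot occur: for an irreducible $(-1)$-curve $L\cdot E=5+(\sum_{i=1}^{6}E_{i})\cdot E\geq 4$, while a reducible such $E$ would have to be supported on the null locus of $L$, which for general points is exactly $A_{1}+A_{2}+A_{3}$ and makes $E^{2}$ even. A configuration $E^{2}=0$, $L\cdot E=1$ is excluded by parity, since $E^{2}=0$ forces $K_{P}\cdot E$ even, hence $-5K_{P}\cdot E\in\{0,10,20,\dots\}$ and $L\cdot E\neq 1$ in every case. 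This proves 1), and 2) then follows from Bertini's theorem, irreducibility being guaranteed by $\Omega^{2}=42>0$.

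For 3), note first that $\Omega\cdot A_{k}=0$, so a general $B\in|\Omega|$ is smooth and disjoint from the three disjoint $(-2)$-curves $A_{k}$; hence the branch divisor $B+\sum_{k}A_{k}$ is smooth and divisible by $2$, with $\widehat{\delta}\doteq\tfrac12(B+\sum_{k}A_{k})\equiv 6h-2E_{1}-E_{2}-2E_{3}-E_{4}-2E_{5}-E_{6}-2E_{7}-2E_{8}-2E_{9}$, and the double cover $\widehat{S}\to P$ is smooth. Exactly as in Theorem \ref{family 1} the double-cover formulas give $\chi(\mathcal{O}_{\widehat{S}})=2\chi(\mathcal{O}_{P})+\tfrac12\widehat{\delta}(K_{P}+\widehat{\delta})=5$, $K_{\widehat{S}}^{2}=2(K_{P}+\widehat{\delta})^{2}=6$ and $p_{g}(\widehat{S})=h^{0}(K_{P}+\widehat{\delta})=4$, where $K_{P}+\widehat{\delta}\equiv 3h-E_{1}-E_{3}-E_{5}-E_{7}-E_{8}-E_{9}$; consequently $q(\widehat{S})=0$. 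Each $A_{k}$ pulls back to a single $(-1)$-curve on $\widehat{S}$, and contracting these three disjoint $(-1)$-curves produces the minimal model $S$, so that $q(S)=0$, $p_{g}(S)=4$ and $K_{S}^{2}=K_{\widehat{S}}^{2}+3=9$ by Lemma \ref{main lemma} 2). As in the earlier families, the step demanding real care is the general-position input — the nef-ness of $-K_{P}$ and the identification of the null locus of $L$ with $A_{1}+A_{2}+A_{3}$ — which plays the role of the appendix-style computations used for Theorem \ref{family 2}.
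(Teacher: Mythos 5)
Your route to 1) is genuinely different from the paper's: the paper never applies Reider's method to $\Omega$ itself, precisely because $\Omega-K_{P}$ cannot be put in the form $(2a+1)K_{P}+2bD$ demanded by Lemma \ref{base point free}; instead it applies that lemma to two auxiliary systems, $|10h-2\Sigma_{i=1}^{4}E_{i}-4\Sigma_{i=5}^{9}E_{i}|$ and its symmetric counterpart (nefness of the relevant $L$ being checked by the appendix method), and then covers $|\Omega|$ by two subsystems of the form ``auxiliary system $+$ conic pencil $+$ exceptional curves'' whose base loci are disjoint. Your direct attack, with $L=-5K_{P}+\Sigma_{i=1}^{6}E_{i}$, $L^{2}=54$, nefness of $L$ deduced from nefness of $-K_{P}$ via Lemma \ref{nef 1}, and a hand-made exclusion of Reider's two exceptional configurations, is an attractive alternative that avoids the appendix computation, and your treatment of the configuration $E^{2}=-1$, $L\cdot E=0$ is correct, granted the general-position identification of the null locus of $L$ with $A_{1}\cup A_{2}\cup A_{3}$ (a fact you flag but do not prove; it plays the same role as the paper's appendix check).

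However, your exclusion of the second configuration is wrong as written, and this is a genuine gap. You claim $E^{2}=0$ forces $L\cdot E=-5K_{P}\cdot E\in\{0,10,20,\dots\}$, but this drops the term $(\Sigma_{i=1}^{6}E_{i})\cdot E$, which need not be even: indeed $\Sigma_{i=1}^{6}E_{i}\equiv 2\Sigma_{k}E_{2k-1}+\Sigma_{k}A_{k}$, and $\Sigma_{k}A_{k}$ is not $2$-divisible in $\mathrm{Pic}(P)$ --- this is exactly the obstruction to invoking Lemma \ref{base point free} for $\Omega$. Concretely, $E=h-E_{2}$ (the strict transform of a general line through $p_{2}$) is effective with $E^{2}=0$ and $K_{P}\cdot E=-2$, yet $L\cdot E=10+1=11$ is odd, so parity alone cannot rule out $L\cdot E=1$. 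The gap is repairable by the same null-locus analysis you use in the first case: since $L$ is nef, an effective $E$ with $L\cdot E=1$ must equal $C_{0}+\Sigma_{k}m_{k}A_{k}$ with $C_{0}$ irreducible and $L\cdot C_{0}=1$; if $-K_{P}\cdot C_{0}\geq 1$ this forces $(\Sigma_{i=1}^{6}E_{i})\cdot C_{0}\leq -4$, impossible because that intersection number is $\geq -1$ for every irreducible curve, while if $-K_{P}\cdot C_{0}=0$ then (for general points) $C_{0}$ is one of the $A_{k}$ or is numerically a multiple of $-K_{P}$, giving $L\cdot C_{0}=0$ or $6m$, never $1$. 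With that substitution, and with the general-position inputs actually justified, your proof of 1) goes through; your parts 2) and 3) are fine and agree with the paper (the computations $\widehat{\delta}(K_{P}+\widehat{\delta})=6$, $\chi(\mathcal{O}_{\widehat{S}})=5$, $K_{\widehat{S}}^{2}=6$, $p_{g}=h^{0}(3h-E_{1}-E_{3}-E_{5}-E_{7}-E_{8}-E_{9})=4$, $K_{S}^{2}=9$ all check out).
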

\begin{proof}
Let $L\doteq 13h-3\Sigma_{i=1}^{4}E_{i}-5\Sigma_{i=5}^{9}E_{i}\equiv -3K_{P}+2(h-\Sigma_{i=5}^{9}E_{i})$. we have $L^{2}=8$ and $10h-2\Sigma_{i=1}^{4}E_{i}-4\Sigma_{i=5}^{9}E_{i} \equiv K_{P}+L$. By the method in appendix, $L$ is nef for the general situation. By Lemma \ref{base point free}, $|10h-2\Sigma_{i=1}^{4}E_{i}-4\Sigma_{i=5}^{6}E_{i}-4\Sigma_{i=7}^{9}E_{i}|$ is base point free. Similarly, $|10h-4\Sigma_{i=1}^{2}E_{i}-2\Sigma_{i=3}^{6}E_{i}-4\Sigma_{i=7}^{9}E_{i}|$ is also base point free.

Now we find two subsystems of $|\Omega|$ as follows
\begin{equation*}
  \begin{cases}
    |10h-2\Sigma_{i=1}^{4}E_{i}-4\Sigma_{i=5}^{6}E_{i}-4\Sigma_{i=7}^{9}E_{i}|+
    |2h-\Sigma_{i=1}^{4}E_{i}|+E_{5}+E_{6} \subset |\Omega|,  \\
    |10h-4\Sigma_{i=1}^{2}E_{i}-2\Sigma_{i=3}^{6}E_{i}-4\Sigma_{i=7}^{9}E_{i}|+
    |2h-\Sigma_{i=3}^{6}E_{i}|+E_{1}+E_{2} \subset |\Omega|.  \\
  \end{cases}
\end{equation*}
Since $|2h-\Sigma_{i=1}^{4}E_{i}|$ has no base point on $E_{i}(i=1,2)$ and $|2h-\Sigma_{i=3}^{6}E_{i}|$ has no base point on $E_{j}(j=5,6)$ and $E_{i}.E_{j}=0(i=1,2;j=5,6)$, $|\Omega|$ is base point free.

2) is true by Bertini's theorem. The proof of 3) is similar to that in Theorem 4.7.
\end{proof}

\bigskip

In particular, each surface in Theorem 5.2 has a genus 4 fibration induced by the double cover, while each surface in Theorem 5.4 and Theorem 5.6 has a genus 3 fibration.

\bigskip

\section{the case $\tau=5$}

In this section, we will prove that $\tau=5$ if and only if $S$ has a genus 2 fibration. Based on the methods of Horikawa in \cite{Horikawa 2} and of Catanese and Pignatelli in \cite{Catanese}, we find all the satisfying surfaces.

\bigskip

\begin{pro}
If $\tau=5$, then we have an unique genus 2 fibration $f:S\rightarrow \mathds{P}^{1}$, and the involution on each fibre induces an involution on $S$ such that both the canonical map and the bicanonical map of $S$ factor through it.
\end{pro}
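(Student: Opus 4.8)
The plan is to produce the fibration as the pullback of the ruling of the canonical image, which for $\tau=5$ is forced to be a quadric surface. Since $p_{g}(\widehat{T})=0$ by Proposition 3.2, the splitting $\pi_{*}\omega_{\widehat{S}}=\mathcal{O}_{\widehat{T}}(K_{\widehat{T}})\oplus\mathcal{O}_{\widehat{T}}(K_{\widehat{T}}+\widehat{\delta})$ gives $H^{0}(K_{\widehat{S}})\cong H^{0}(K_{\widehat{T}}+\widehat{\delta})$, so the canonical map of $S$ is induced by $|K_{\widehat{T}}+\widehat{\delta}|$ on the rational surface $\widehat{T}$ and factors through $i$; in particular $\phi_{K_{S}}$ is generically $2:1$ onto its image $\Sigma\subseteq\mathds{P}^{3}$. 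By Lemma \ref{main lemma} 2) one has $(K_{\widehat{T}}+\widehat{\delta})^{2}=\tfrac{1}{2}(K_{S}^{2}-\tau)=2$, while $h^{0}(K_{\widehat{T}}+\widehat{\delta})=p_{g}(S)=4$. As $\widehat{T}\to S/i$ is birational, the rational map $\widehat{T}\dashrightarrow\Sigma$ is birational; hence $\Sigma$ is an irreducible nondegenerate surface with $\deg\Sigma\le(K_{\widehat{T}}+\widehat{\delta})^{2}=2$, forcing $\deg\Sigma=2$, i.e. $\Sigma$ is a quadric (smooth or a cone) and $\widehat{T}\to\Sigma$ is a birational morphism.

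Next I would separate the fixed and moving parts. A direct computation gives $(K_{\widehat{T}}+\widehat{\delta})A_{i}=\widehat{\delta}A_{i}=\tfrac{1}{2}A_{i}^{2}=-1$, so each $(-2)$-curve $A_{i}$ lies in the fixed part; setting $M\doteq(K_{\widehat{T}}+\widehat{\delta})-A$ one finds $M^{2}=2$ and $MA_{i}=1$, and the morphism $g\colon\widehat{T}\to\Sigma$ defined by $|M|$ carries each $A_{i}$ onto a line lying in a single ruling of $\Sigma$. For a general member $\Gamma$ of that ruling one has $\Gamma^{2}=0$, $K_{\widehat{T}}\Gamma=-2$, $A\Gamma=0$, hence $\widehat{\delta}\Gamma=(K_{\widehat{T}}+\widehat{\delta})\Gamma-K_{\widehat{T}}\Gamma=1+2=3$: the double cover over $\Gamma$ is branched in $6$ points and is a smooth curve of genus $2$. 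Pulling the ruling back by $\phi_{K_{S}}$ therefore yields a base point free pencil $|C|$ on $S$ with $C^{2}=0$ and $K_{S}C=2$, i.e. a genus $2$ fibration $f\colon S\to\mathds{P}^{1}$. On a general fibre $C$ the restriction of $\phi_{K_{S}}$ is the hyperelliptic $g^{1}_{2}$, whose deck transformation is $i|_{C}$; thus $i$ is exactly the relative hyperelliptic involution, and the fibrewise involution globalises to $i$.

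Uniqueness I would prove intrinsically. If $f'\colon S\to\mathds{P}^{1}$ is any genus $2$ fibration with relative hyperelliptic involution $j$, then $\phi_{K_{S}}$ factors through $j$ (because $K_{S}|_{C'}=K_{C'}$ is the hyperelliptic $g^{1}_{2}$ on a general fibre $C'$), so $j$ is a canonical involution and hence $j=i$ by the remark following Proposition 3.2. Consequently $f'$ factors through $S/i$ and its fibres are pullbacks of a ruling of $\Sigma$. Now $MA_{i}=1$ together with $M\equiv\Gamma_{1}+\Gamma_{2}$ for the two ruling classes of a smooth $\Sigma$ forces $A_{i}\Gamma_{1}+A_{i}\Gamma_{2}=1$; since a general fibre over a ruling has genus $2+A\cdot\Gamma$, the genus $2$ condition $A\cdot\Gamma=0$ can be met by at most one ruling (a cone has only one ruling in any case), and were $A$ to meet both rulings no ruling would give genus $2$, contradicting the existence just proved. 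Hence the genus $2$ pencil is unique.

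Finally, $\phi_{K_{S}}$ factors through $i$ by hypothesis, and $\phi_{2K_{S}}$ factors through $i$ because on each genus $2$ fibre $C$ one has $2K_{S}|_{C}=2K_{C}$, and $|2K_{C}|$ maps $C$ two-to-one onto a conic through the hyperelliptic involution $i|_{C}$; as $f$ is $i$-invariant this gives $\phi_{2K_{S}}\circ i=\phi_{2K_{S}}$. The hard part will be the second step: checking that the ruling really pulls back to a base point free pencil of irreducible, reduced genus $2$ curves—that the five base points of $|K_{S}|$ become points on fibres rather than base points of $|C|$—together with the intersection bookkeeping with $A$ that both produces the genus $2$ fibre and underlies the uniqueness of the ruling.
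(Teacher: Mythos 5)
Your overall strategy --- derive the quadric structure of the canonical image first, then pull back a ruling to get the genus $2$ pencil --- inverts the paper's logic, and it founders on its very first step, which is left unproved. You assert that because $\widehat{T}\to S/i$ is birational, the rational map $\widehat{T}\dashrightarrow\Sigma$ defined by $|K_{\widehat{T}}+\widehat{\delta}|$ is birational, and then bound $\deg\Sigma\le(K_{\widehat{T}}+\widehat{\delta})^{2}=2$. Neither claim is justified. Birationality of $\widehat{T}\to S/i$ says nothing about the degree of $\widehat{T}\dashrightarrow\Sigma$: a priori $\phi_{K_{S}}$ could be of degree $4$ onto a quadric (i.e.\ $\widehat{T}\dashrightarrow\Sigma$ of degree $2$), or of degree $2$ onto a cubic; both options are consistent with the only free inequality available, $\deg\phi_{K_{S}}\cdot\deg\Sigma\le K_{S}^{2}=9$. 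Moreover the bound $\deg\Sigma\le(K_{\widehat{T}}+\widehat{\delta})^{2}$ is not valid as stated: $K_{\widehat{T}}+\widehat{\delta}$ is not nef (you yourself compute $(K_{\widehat{T}}+\widehat{\delta})A_{i}=-1$), the system has a fixed part containing $A$ and possibly more, plus base points; the correct inequality is $\deg(\mathrm{map})\cdot\deg\Sigma\le (M')^{2}$ where $M'$ is the \emph{moving} part, and nothing in your argument shows that $M'$ equals $K_{\widehat{T}}+\widehat{\delta}-A$, nor that $|K_{\widehat{T}}+\widehat{\delta}-A|$ is base point free --- both of which you assume when you let $|M|$ define a morphism $g:\widehat{T}\to\Sigma$. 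Since the ruling, the genus $2$ pencil, your uniqueness argument via $A\cdot\Gamma$, and the final factorisations are all built on $g$, the entire proof hangs on this unproved step. The paper itself flags exactly this point: Remark 6.3 explains that establishing $\deg\varphi_{K_{S}}(S)=2$ directly requires first using Lemma \ref{canonical involution} (the isolated fixed points are base points of $|K_{S}|$) to get $\deg\varphi_{K_{S}}(S)<4$, and then a genuinely nontrivial argument (the proof of Lemma 3.14 in \cite{Bauer 1}) to exclude a cubic image; you identify the ``hard part'' of your proof as the later pencil bookkeeping, but the real gap sits here.

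By contrast, the paper's proof never touches the canonical image at this stage; it runs through the bicanonical map, and this is where the hypothesis $\tau=5$ does its work. By Lemma \ref{main lemma} 6), $\tau=5$ forces $h^{0}(2K_{\widehat{T}}+\widehat{\delta})=0$, whence $H^{0}(\widehat{S},2K_{\widehat{S}})\simeq H^{0}(\widehat{T},2K_{\widehat{T}}+2\widehat{\delta})$ and the bicanonical map factors through $i$; in particular it is non-birational, so the existence of a genus $2$ fibration follows from \cite{Ciliberto} and \cite{Horikawa 2}. Uniqueness is then a two-line index-theorem argument ($F_{1}F_{2}\ge 2$ would give $K_{S}^{2}\le 4$), independent of any quadric geometry, and the factorisation of $|2K_{S}|$ through the fibrewise involution follows from the surjectivity of $H^{0}(S,2K_{S})\to H^{0}(F,2K_{F})$. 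Note that in your proposal $\tau=5$ enters only through $(K_{\widehat{T}}+\widehat{\delta})^{2}=2$, whereas in the paper it enters through the vanishing $h^{0}(2K_{\widehat{T}}+\widehat{\delta})=0$; that vanishing is precisely what makes the bicanonical route work and what your route has no substitute for. To salvage your approach you would have to carry out the program of Remark 6.3 1) in full; as written, the proposal assumes its hardest step.
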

\begin{proof}
If $\tau=5$, then $h^{0}(2K_{\widehat{T}}+\widehat{\delta})=0$ by Lemma \ref{main lemma} 6). Since $\pi_{*}\mathcal{O}_{\widehat{S}}\simeq\mathcal{O}_{\widehat{T}}\oplus \mathcal{O}_{\widehat{T}}(-\widehat{\delta})$ and $K_{\widehat{S}}\equiv \pi^{*}(K_{\widehat{T}}+\widehat{\delta})$, we have $H^{0}(\widehat{S},2K_{\widehat{S}})\simeq H^{0}(\widehat{T},2K_{\widehat{T}}+2\widehat{\delta})$ and the two canonical map of $\widehat{S}$ factors through the involution.

On this condition, the bicanonical map of $S$ is nonbirational, which infers that $S$ have a genus 2 fibration $f:S \rightarrow \mathds{P}^{1}$ by Theorem 1.8, Theorem 2.1 in \cite{Ciliberto} and Theorem 5 in \cite{Horikawa 2}. If $|F_{1}|$ and $|F_{2}|$ are two different pencils of genus 2 without base point on $S$, then $F_{1}F_{2}\geq2$ and $(F_{1}+F_{2})^{2}\geq4>0$. We have $K_{S}(F_{1}+F_{2})=4$ by genus formula and $K_{S}^{2}\leq4$ by index theorem. This contradicts $K^{2}_{S}=9$. Hence, the genus 2 fibration on $S$ is unique.

Let $F$ be a general fibre of the genus 2 fibration. Since $h^{0}(S,K_{S})=4$ and $h^{0}(F,K_{F})=2$, we have $h^{0}(S,K_{S}-F)\geq2$. There exists $D\in |K_{S}-F|$ and D is 1-connected with $D^{2}>0$. So $h^{1}(S,-D)=0$ by Theorem A in \cite{Bombieri}. Since $h^{1}(S,2K_{S}-F)=h^{1}(S,-D)=0$ by Serre duality, the restriction map $H^{0}(S,2K_{S})\rightarrow H^{0}(F,2K_{F})$ is surjective. It's obvious that $|2K_{F}|$ factors through the involution on the general fibre $F$, so we can know that $|2K_{S}|$ factors the involution induced by the fibre.
\end{proof}

The involution on $S$ induced by the genus 2 fibration is exaclty the involution $i$. Moreover, if $S$ has a genus 2 fibration, then $i$ factors through the bicanonical map, which means $h^{0}(2K_{\widehat{T}}+\widehat{\delta})=0$ and $\tau=5$. Hence, none of the surfaces with $\tau=1$ or $\tau=3$ has a genus 2 pencil.

\begin{pro}
Let $S$ be a minimal surface with $p_{g}(S)=4,q(S)=0$ having a genus 2 fibration $f:S\rightarrow \mathds{P}^{1}$. If $|K_{S}|$ is not composed with a pencil, then the canonical map of S factors through the relative canonical map, and $\mathds{P}(f_{*}w_{S|\mathds{P}^{1}})\simeq \mathds{F}_{r}$ with $r=0$ or 2.
\end{pro}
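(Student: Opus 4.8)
The plan is to extract everything from the rank two Hodge bundle $V\doteq f_{*}\omega_{S|\mathds{P}^{1}}$ on $\mathds{P}^{1}$, whose projectivization is by definition the target of the relative canonical map. First I would compute $\deg V$: since $R^{1}f_{*}\mathcal{O}_{S}\cong V^{\vee}$ by relative duality and $q(S)=0$, $p_{g}(S)=4$, the Leray spectral sequence for $f$ over $\mathds{P}^{1}$ gives $h^{0}(V^{\vee})=0$ and $h^{1}(V^{\vee})=h^{2}(S,\mathcal{O}_{S})=4$, hence $\chi(V^{\vee})=-4=2-\deg V$ and $\deg V=6$. As $f$ is a fibration over a curve, $V$ is nef (semipositivity of $f_{*}\omega_{S|\mathds{P}^{1}}$), so $V\cong\mathcal{O}(a)\oplus\mathcal{O}(b)$ with $a\geq b\geq 0$ and $a+b=6$; thus $\mathds{P}(V)\cong\mathds{F}_{a-b}$ and $a-b$ is even. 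It remains to pin down $(a,b)$ among $(6,0),(5,1),(4,2),(3,3)$.

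The identification $\mathcal{O}_{S}(K_{S})=\omega_{S|\mathds{P}^{1}}\otimes f^{*}\mathcal{O}(-2)$ together with the projection formula give $f_{*}\mathcal{O}_{S}(K_{S})=V(-2)$, so $H^{0}(S,K_{S})=H^{0}(\mathds{P}^{1},V(-2))$ and $p_{g}(S)=\max(a-1,0)+\max(b-1,0)=4$. This forces $b\geq 1$, ruling out $(6,0)$ (which would give $p_{g}=5$) and leaving $(5,1),(4,2),(3,3)$. For statement (1) I would set $W\doteq\mathds{P}(V)$, $\pi:W\to\mathds{P}^{1}$, and use the relative canonical morphism $\phi:S\to W$ realizing $S$ as a double cover of $W$, with $\phi^{*}\mathcal{O}_{W}(1)=\omega_{S|\mathds{P}^{1}}$ (up to vertical divisors) and $K_{S}=\phi^{*}(\mathcal{O}_{W}(1)\otimes\pi^{*}\mathcal{O}(-2))$. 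Since $\pi_{*}(\mathcal{O}_{W}(1)\otimes\pi^{*}\mathcal{O}(-2))=V(-2)$, the isomorphism $H^{0}(S,K_{S})\cong H^{0}(W,\mathcal{O}_{W}(1)\otimes\pi^{*}\mathcal{O}(-2))$ shows that $|K_{S}|$ is the $\phi$-pullback of $|\mathcal{O}_{W}(1)\otimes\pi^{*}\mathcal{O}(-2)|$; hence $\phi_{K_{S}}=\psi\circ\phi$, with $\psi$ the map defined by this system on $W$, which is exactly the assertion that the canonical map factors through the relative canonical map.

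To finish (2) I would discard $(5,1)$ using the hypothesis that $|K_{S}|$ is not composed with a pencil. Writing $\xi=c_{1}(\mathcal{O}_{W}(1))$ and $F$ for the fibre class, one has $\xi^{2}=\deg V=6$, $\xi F=1$, $F^{2}=0$, and $D\doteq\xi-2F$ with $D^{2}=2$. In the case $(5,1)$ we have $W=\mathds{F}_{4}$ with negative section $C_{-}=\xi-5F$, where $C_{-}^{2}=-4$ and $\xi C_{-}=1$, so $D\cdot C_{-}=-1<0$: the irreducible curve $C_{-}$ is a fixed component of $|D|$, and $D-C_{-}=3F$ shows the moving part of $|D|$ is the pencil $|3F|$, composed with $|F|$. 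Then $\psi(W)$, hence $\phi_{K_{S}}(S)=\psi(W)$, is a curve, i.e. $|K_{S}|$ is composed with a pencil --- a contradiction. As $a-b$ is even, this leaves only $(4,2)$ and $(3,3)$, that is $\mathds{P}(V)\cong\mathds{F}_{2}$ or $\mathds{F}_{0}$, as claimed.

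The main obstacle I anticipate is not the numerics but the technical justification in the middle paragraph: one must check that the relative canonical map is a genuine morphism exhibiting $S$ as a double cover of the Hirzebruch surface $W$, and that $\phi^{*}\mathcal{O}_{W}(1)$ equals $\omega_{S|\mathds{P}^{1}}$ with no spurious vertical fixed part over the singular fibres, so that both the linear-system identification and the ``composed with a pencil'' dichotomy are governed by the generic fibre alone. This is where I would invoke Horikawa's analysis of genus $2$ fibrations in \cite{Horikawa 2}; the remaining computations are elementary intersection theory on $\mathds{F}_{a-b}$.
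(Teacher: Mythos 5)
Your proposal is correct, and while the factorization part coincides with the paper's argument (both identify $H^{0}(S,K_{S})$ with the sections of the twisted tautological bundle on $\mathds{P}(f_{*}\omega_{S|\mathds{P}^{1}})$, so that $\varphi_{K_{S}}$ factors through the relative canonical map), your determination of $r$ takes a genuinely different route. The paper never computes the splitting type of the Hodge bundle: it uses the hypothesis that $|K_{S}|$ is not composed with a pencil to show that the restriction $H^{0}(S,K_{S})\rightarrow H^{0}(F,K_{F})$ is surjective for \emph{every} fibre $F$ (if $h^{1}(K_{S}-F)>0$, iterating $h^{0}(K_{S}-iF)\leq h^{0}(K_{S}-(i+1)F)+1$ would force $|K_{S}|$ to be composed with the pencil of fibres), deduces that the induced map $\theta$ on $\mathds{P}(f_{*}\omega_{S})$ is a base-point-free birational morphism sending fibres to lines, and then the genus formula for plane curves forces the canonical image to be a quadric, of which $\mathds{P}(f_{*}\omega_{S})$ is the minimal resolution, hence $\mathds{F}_{0}$ or $\mathds{F}_{2}$. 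You instead pin down the splitting $(a,b)$ of $V=f_{*}\omega_{S|\mathds{P}^{1}}$: $\deg V=6$ by relative duality and Leray, nefness (in fact $q=0$ alone already gives $a,b\geq 1$), $p_{g}=4$ excludes $(6,0)$, and the pencil hypothesis excludes $(5,1)$, since then $f_{*}\omega_{S}=\mathcal{O}(3)\oplus\mathcal{O}(-1)$ and $|K_{S}|$ would be a fixed part plus $f^{*}|\mathcal{O}_{\mathds{P}^{1}}(3)|$ --- which is exactly what your intersection computation with the negative section of $\mathds{F}_{4}$ expresses. Your route is close in spirit to the paper's own Remark 6.3(2), which lists the splittings $(1,1),(0,2),(-1,3)$ of $f_{*}\omega_{S}$ but excludes $(-1,3)$ only for $K_{S}^{2}=9$ via a branch-curve analysis; your exclusion is cleaner in that it works under the proposition's hypotheses alone and yields the explicit splitting types $\mathcal{O}(4)\oplus\mathcal{O}(2)$ or $\mathcal{O}(3)^{\oplus 2}$, whereas the paper's route buys in exchange the extra geometric fact that $\deg\varphi_{K_{S}}(S)=2$, i.e.\ the canonical image is a quadric.
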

\begin{proof}
Let $\phi$ be the projection $\mathds{P}(f_{*}w_{S})\rightarrow \mathds{P}^{1}$ and $\mathcal{O}(1)$ be the tautological bundle of $\mathds{P}(f_{*}w_{S})$. Since $f_{*}w_{S}\simeq \phi_{*}\mathcal{O}(1)$, we have $H^{0}(S,w_{S})\simeq  H^{0}(\mathds{P}(f_{*}w_{S}),\mathcal{O}(1))$. So $\varphi_{K_{S}}=\varphi\circ\theta$, where $\theta: \mathds{P}(f_{*}w_{S}) \dashrightarrow \mathds{P}^{3}$ is defined by $\mathcal{O}(1)$ and $\varphi :S\dashrightarrow \mathds{P}(f_{*}w_{S})$.

Since $h^{1}(S,K_{S})=q(S)=0$, the cokernel of the restriction map $H^{0}(S,K_{S})\rightarrow H^{0}(F,K_{F})$ is $H^{1}(S,K_{S}-F)$ for any fibre $F$. If we assume $h^{1}(S,K_{S}-F)>0$, then the restriction map $H^{0}(S,K_{S}-iF)\rightarrow H^{0}(F,K_{F})$ is not surjective and  $h^{0}(S,K_{S}-iF)\leq h^{0}(S,K_{S}-(i+1)F)+1$ for any $i\geq 0$. Hence, there is a positive integer $n$ with $h^{0}(S,K_{S})=h^{0}(S,nF)=n+1$. This contradicts that $|K_{S}|$ is not composed with a pencil. Thus, $h^{1}(S,K_{S}-F)=0$ and the restriction map $H^{0}(S,K_{S})\rightarrow H^{0}(F,K_{F})$ is surjective for any fibre $F$.

Let $C\doteq\varphi(F)$. Since $H^{0}(\mathbb{P}(f_{*}w_{S}), \mathcal{O}(1)) \rightarrow H^{0}(C,\mathcal{O}_{C}(1))$ is surjective, the map $\theta|_{C}$ is an embedding morphism for any fibre $C$, which infers that $\theta$ is base point free. Since $\varphi_{K_{S}}$ is not composed with a pencil and $\varphi_{K_{S}}=\varphi\circ\theta$, $\theta$ is a birational morphism.

Let $H=\theta(C)$. We have $\mathrm{deg}H=\mathrm{deg}\varphi_{K_{S}}(S)$ and the map $\theta|_{C}:C \rightarrow H$ is an embedding morphism. Since $H$ is a plane curve, we have $0=g(H)=\frac{1}{2}(\mathrm{deg}H-1) (\mathrm{deg}H-2)$. Hence, $\mathrm{deg}\varphi_{K_{S}}(S)=2$ and $\mathds{P}(f_{*}w_{S})$ is the minimal resolution of the singularities of $\varphi_{K_{S}}(S)$, a fortiori, $\mathds{P}(f_{*}w_{S})\simeq \mathds{F}_{r}$ with $r=0$ or 2.
\end{proof}

\begin{remark}
In the case of $K^{2}_{S}=9$, we have another two solutions to prove $deg\varphi_{K_{S}}(S)=2$ or $\mathds{P}(f_{*}w_{S})\simeq \mathds{F}_{r}$ with $r=0$ or 2.

\begin{enumerate}[1)]
\item Since $\tau=5$, the canonical system has at least 2 base points or a base part by Lemma \ref{canonical involution}. As $\mathrm{deg}\varphi_{K_{S}}(S)\leq \frac{1}{2}(K_{S}^{2}-2) < 4$, then the canonical map is of degree two onto a cubic or a quadric. If $\varphi_{K_{S}}(S)$ is a cubic, then the singular locus of $\varphi_{K_{S}}(S)$ is of dimension one. We can find a contradiction according to the proof of Lemma 3.14 in \cite{Bauer 1}.
\item Let $f_{*}w_{S}=\mathcal{O}(a)\oplus \mathcal{O}(b)$ and $a\leq b$. Since $\mathrm{deg} f_{*}w_{S}=a+b=2$ and $h^{0}(\mathds{P}^{1},f_{*}w_{S})=4$, there are three cases ``$a=1,b=1$'', ``$a=0,b=2$'' and ``$a=-1,b=3$''. By studying the singularities of the branch curve of the double cover $S \dashrightarrow \mathds{P}(f_{*}w_{S|\mathds{P}^{1}})$, we can find a contradiction if ``$a=-1,b=3$''.
\end{enumerate}
\end{remark}

In order to finish the classification, we use the notations and the methods of Horikawa in \cite{Horikawa 1} and \cite{Horikawa 2}.

\begin{theorem}\label{family 6}
If $\tau=5$, then S is the minimal resolution of a double cover of a Hirzebruch surface $\mathds{F}_{r}(r=0,2)$ branched along a curve in $|6\Delta+(6+2k)\Gamma|(k=0,1,2)$, where $\Delta$ denotes the diagonal or the 0-section according as r=0 or 2 and $\Gamma$ is a fibre, with the possible singularities shown in the following table.

\begin{tabular}{m{1cm} m{0.2cm} m{7cm} m{1.5cm} m{1.7cm}}
\hline
Family & r & Conditions of Singularities & Branch curves & Dimension \\
\hline
$\mathcal{M}_{1,0}$ & 0 & $v(I_{1})+v(III_{1})+v(V)=5$ & $6\Delta+10\Gamma$  & 32 \\
$\mathcal{M}_{1,2}$ & 2 & $v(I_{1})+v(III_{1})+v(V)=5$ & $6\Delta+10\Gamma$  & 31 \\
\hline
$\mathcal{M}_{2,0}$ & 0 & $v(I_{1})+v(III_{1})+v(V)=3,v(II_{1})+v(IV_{1})=1$ & $6\Delta+8\Gamma$  & 31 \\
$\mathcal{M}_{2,2}$ & 2 & $v(I_{1})+v(III_{1})+v(V)=3,v(II_{1})+v(IV_{1})=1$ & $6\Delta+8\Gamma$  & 30 \\
\hline
$\mathcal{M}_{3,0}$ & 0 & $v(I_{1})+v(III_{1})+v(V)=2,v(I_{2})+v(III_{2})=1$ & $6\Delta+8\Gamma$  & 30 \\
$\mathcal{M}_{3,2}$ & 2 & $v(I_{1})+v(III_{1})+v(V)=2,v(I_{2})+v(III_{2})=1$ & $6\Delta+8\Gamma$  & 29 \\
\hline
$\mathcal{M}_{4,0}$ & 0 & $v(I_{1})+v(III_{1})+v(V)=1,v(II_{1})+v(IV_{1})=2$ & $6\Delta+6\Gamma$  & 30 \\
$\mathcal{M}_{4,2}$ & 2 & $v(I_{1})+v(III_{1})+v(V)=1,v(II_{1})+v(IV_{1})=2$ & $6\Delta+6\Gamma$  & 29 \\
\hline
$\mathcal{M}_{5,0}$ & 0 & $v(I_{1})+v(III_{1})+v(V)=1,v(II_{2})+v(IV_{2})=1$ & $6\Delta+6\Gamma$  & 29 \\
$\mathcal{M}_{5,2}$ & 2 & $v(I_{1})+v(III_{1})+v(V)=1,v(II_{2})+v(IV_{2})=1$ & $6\Delta+6\Gamma$  & 28 \\
\hline
$\mathcal{M}_{6,0}$ & 0 & $v(I_{3})+v(III_{3})=1$ & $6\Delta+6\Gamma$  & 28 \\
$\mathcal{M}_{6,2}$ & 2 & $v(I_{3})+v(III_{3})=1$ & $6\Delta+6\Gamma$  & 27 \\
\hline
\end{tabular}
\end{theorem}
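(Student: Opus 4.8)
The plan is to use the genus-$2$ fibration furnished by Proposition 6.1 together with its relative canonical map from Proposition 6.2 to present $S$ as a double cover, and then to convert the numerical data $\chi(\mathcal{O}_S)=5$, $K_S^2=9$ into a finite enumeration of admissible singular fibres, following Horikawa \cite{Horikawa 2} and the structure theory of Catanese--Pignatelli \cite{Catanese}. By Proposition 6.2 the target $W\doteq\mathds{P}(f_{*}\omega_{S|\mathds{P}^{1}})$ is $\mathds{F}_r$ with $r\in\{0,2\}$, and in either case one may choose a basis $\Delta,\Gamma$ of $\mathrm{Pic}(W)$ with $\Delta^2=2$, $\Delta\Gamma=1$, $\Gamma^2=0$ and $K_W\equiv-2\Delta$ ($\Delta$ the diagonal if $r=0$, the $0$-section if $r=2$). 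The canonical involution $i$ is the covering involution of the relative canonical map, which restricts to the hyperelliptic map on a general fibre; hence the branch curve $B$ is a $6$-section, $B\cdot\Gamma=6$, so $B\equiv6\Delta+m\Gamma$, and $B\equiv2L$ forces $m=6+2k$ to be even with $L\equiv3\Delta+(3+k)\Gamma$.

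Next I would set up the numerical bookkeeping. For a smooth branch the standard double-cover formulas give $\chi=2\chi(\mathcal{O}_W)+\tfrac12 L(L+K_W)=11+2k$ and $K^2=2(K_W+L)^2=16+4k$, which the essential (non-stable) singular fibres of $B$ then decrease. From Horikawa's local list of genus-$2$ singular fibres $I_n,II_n,III_n,IV_n,V$ I would record, for each type, its contributions to $\chi$, to $K_S^2$, and to the number $\tau$ of isolated fixed points, the last read off from the canonical resolution of the cover. Writing $N_{\mathrm{odd}}$ for the number of fibres of odd type ($I,III,V$) and weighting each type by its subscript (with $V$ of weight $1$), the constraints $\chi=5$ and $K_S^2=9$ become the two linear relations $\sum(\text{weight})=3+k$ and $N_{\mathrm{odd}}=1+2k$; the resulting tally reproduces $\tau=5$ in every case, as it must by the relation $\tau=K_S^2-4$ read off from Lemma \ref{main lemma} 6). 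As each odd fibre has weight at least $1$, one gets $1+2k\le3+k$, whence $k\in\{0,1,2\}$.

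Enumerating the solutions of these relations produces one configuration for $k=2$, two for $k=1$ and three for $k=0$, matching the six families of the table. Here one must also discard configurations that satisfy every numerical identity yet do not occur: for $k=0$ the multiset formed by one fibre of type $I_2/III_2$ and one of type $II_1/IV_1$ has the correct $\chi$, $K^2$ and $\tau$ but does not arise, and ruling it out needs Horikawa's explicit normal forms. For existence I would realise each surviving configuration by a curve $B\in|6\Delta+(6+2k)\Gamma|$ carrying exactly the prescribed singularities, using the base-point-freeness and Bertini arguments of Sections 4 and 5 (Lemmas \ref{base point free} and \ref{base point free 1}). The dimension of each family is then the dimension of the corresponding stratum of branch curves modulo $\mathrm{Aut}(\mathds{F}_r)$; since $6\Delta+m\Gamma$ has the same $h^0$ on both surfaces, the uniform drop by $1$ from the $r=0$ row to the $r=2$ row simply reflects $\dim\mathrm{Aut}(\mathds{F}_0)=6<7=\dim\mathrm{Aut}(\mathds{F}_2)$.

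The main obstacle lies in two places. First, pinning down exactly how each Horikawa type contributes to $\tau$ requires following the canonical resolution of the double cover point by point and matching the resulting isolated fixed points against Lemma \ref{main lemma}. Second, and more delicate, is the geometric half of the classification: excluding the numerically admissible but non-occurring configurations, and proving that each of the six surviving strata is non-empty and irreducible of the claimed dimension. It is precisely here that the methods of \cite{Horikawa 2} and \cite{Catanese} do the real work.
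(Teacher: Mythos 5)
Your route is the paper's route: Propositions 6.1 and 6.2 supply the genus-$2$ fibration and $\mathds{P}(f_{*}w_{S|\mathds{P}^{1}})\cong\mathds{F}_{0}$ or $\mathds{F}_{2}$, the table is then obtained by playing Horikawa's list of singular-fibre types against the numerical invariants, and existence and dimensions are handled by Bertini-type arguments and parameter counts, exactly as in Sections 4--6. Your two relations $\sum(\mathrm{weight})=3+k$ and $N_{\mathrm{odd}}=1+2k$ are a correct (indeed slightly finer) repackaging of the single equation the paper quotes from Horikawa's Theorem 3, namely $5=\sum_{k}\{(2k-1)(v(I_{k})+v(III_{k}))+2k(v(II_{k})+v(IV_{k}))\}+v(V)$, and your derivation of $k\in\{0,1,2\}$ from them is clean.

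The genuine gap is in the enumeration step, and you half-acknowledge it yourself. For $k=0$ your relations have \emph{four} solutions, not three: besides $v(I_{1})+v(III_{1})+v(V)=1$ with $v(II_{1})+v(IV_{1})=2$, the same with $v(II_{2})+v(IV_{2})=1$, and $v(I_{3})+v(III_{3})=1$, there is the mixed solution $v(I_{2})+v(III_{2})=1$, $v(II_{1})+v(IV_{1})=1$. So your sentence ``produces \dots three for $k=0$, matching the six families of the table'' is false on its own terms, and your very next sentence concedes as much. That mixed configuration passes every numerical test available: both of your relations, Horikawa's index equation, $\chi=5$, $K_{S}^{2}=9$, $\tau=K_{S}^{2}-4=5$ (Lemma \ref{main lemma} 6)), and $2$-divisibility of the resulting branch class $6\Delta+6\Gamma$. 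Hence the completeness of the six-row table rests entirely on excluding this configuration, and your proposal supplies no argument for the exclusion --- only the assertion that it ``does not arise'' plus a pointer to ``Horikawa's explicit normal forms''. As written, this does not prove the theorem. It is worth stressing that you have located a genuine soft spot of the paper itself: the paper's proof also derives the table solely from the displayed index equation, which admits this seventh solution, and the table omits it without comment. A complete argument must either show, from Horikawa's local analysis (or from the Catanese--Pignatelli data $(V_{1},t,\varepsilon,\omega)$ recalled in Theorem 6.5), that a relatively minimal genus-$2$ fibration over $\mathds{P}^{1}$ with $\chi=5$, $K^{2}=9$ cannot have exactly one fibre of type $I_{2}/III_{2}$ together with one of type $II_{1}/IV_{1}$, or else exhibit such a surface, in which case the table (and the theorem) would need a seventh family.
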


\begin{proof}
By Lemma 3.1 2), $|K_{S}|$ is not composed with a pencil. By theorem 1 in \cite{Horikawa 2} and Proposition 6.1, the branch curve $B$ of the double cover $S \dashrightarrow \mathds{P}(f_{*}w_{S|\mathds{P}^{1}})$ only has the following singularities $(0), (I_{k}), (II_{k}) , (III_{k}), (IV_{k})$ and $(V)$ with $k\geq1$. By Proposition 6.2, $\mathds{P}(f_{*}w_{S|\mathds{P}^{1}})\simeq \mathds{F}_{0}$ or $\mathds{F}_{2}$. Since $5=\sum_{k}\{(2k-1)(v(I_{k})+v(III_{k}))+2k(v(II_{k})+v(IV_{k}))\}+v(V)$ by theorem 3 in \cite{Horikawa 2}, the possible singularities are shown in the table. By Lemma 6 in \cite{Horikawa 1} and the property of surface with a genus 2 fibration, we can find the equivalence class of branch curve for each family.

As a representative, we prove the existence of surface in $\mathcal{M}_{1,0}$. Let $\Delta_{\infty}$ be the section with $\Delta_{\infty}^{2}=0$ on $\mathds{F}_{0}$. We assume that $P$ is the blowing-up of $\mathds{F}_{0}$ at 10 points $p_{i}(1\leq i\leq10)$, where $p_{2k-1}$ and  $p_{2k}$ are in fibre $B_{k}$ for $1\leq k \leq 5$. If we define $f:P\rightarrow \mathds{F}_{0}$ and $\Omega \equiv 6\Delta+5\Gamma-3\Sigma_{i=1}^{10} E_{i}$, then we can prove that $|\Omega|$ is base point free.

In fact, let $L\doteq 6\Delta+2\Gamma-3\Sigma_{i=1}^{10} E_{i}$, we have $L^{2}=6$ and $4\Delta+2\Gamma-2\Sigma_{i=1}^{10} E_{i} \equiv K_{P}+L$. By the method in appendix, $L$ is nef for the general situation. By Lemma \ref{base point free}, $|4\Delta+2\Gamma-2\Sigma_{i=1}^{10} E_{i}|$ is base point free. Similarly, $|4\Delta+2\Delta_{\infty}-2\Sigma_{i=1}^{10} E_{i}|$ is also base point free. Let $\widehat{B}_{k}$ be the strict transform of $B_{k}$ and $\widehat{\Delta}_{i,j,k}$ be the strict transform of the section passing through the points $p_{i}$, $p_{j}$ and $p_{k}$. Now we find three subsystems of $|\Omega|$ as follows
\begin{equation*}
  \begin{cases}
    |4\Delta+2\Delta_{\infty}-2\Sigma_{i=1}^{10} E_{i}|+
    |2\Gamma|+\Sigma_{k=1}^{5}\widehat{B}_{k} \subset |\Omega|,  \\
    |4\Delta+2\Gamma-2\Sigma_{i=1}^{10} E_{i}|+
    |\Gamma|+\widehat{\Delta}_{1,3,5}+\widehat{\Delta}_{2,4,6}+\widehat{B}_{4}+\widehat{B}_{5} \subset |\Omega|,  \\
    |4\Delta+2\Gamma-2\Sigma_{i=1}^{10} E_{i}|+
    |\Gamma|+\widehat{\Delta}_{5,7,9}+\widehat{\Delta}_{6,8,10}+\widehat{B}_{1}+\widehat{B}_{2} \subset |\Omega|.  \\
  \end{cases}
\end{equation*}
Since $\widehat{\Delta}_{1,3,5}\widehat{B}_{k}=\widehat{\Delta}_{2,4,6} \widehat{B}_{k}=0$ for $0\leq k\leq 3$ and $\widehat{\Delta}_{5,7,9}\widehat{B}_{k}=\widehat{\Delta}_{6,8,10} \widehat{B}_{k}=0$ for $3\leq k\leq 5$, $|\Omega|$ is base point free. Now by Bertini's theorem, we can prove the existence of surface in $\mathfrak{M}_{1,0}$ as before.

As a representative, we calculate the dimensions of $\mathcal{M}_{1,0}$. For a general surface in this family, we can assume that $\widehat{T}$ is the blowing-up of $\mathds{F}_{0}$ at 10 points $p_{i}(1\leq i\leq 10)$, where $p_{2k}$ and $p_{2k-1}$ are in the same fibre $B_{k}$ for $1\leq k\leq 5$. Since $2\overline{\delta}\equiv 6\Delta+5\Gamma-3\Sigma_{i=1}^{10} E_{i}$, we have $h^{0}(P,2\widehat{\delta})=h^{0}(P,2\overline{\delta})=24$. Since these 10 points in $\mathds{F}_{0}$ depend on $(2+1)\cdot 5=15$ parameters, the dimension of this family is $\mathrm{dim} \mathcal{M}_{1,0}=\mathrm{dim}|2\widehat{\delta}|+15-\mathrm{dim}(\mathrm{Aut}(\mathds{F}_{0}))=32$.
\end{proof}

In fact, all the 12 families in Theorem 6.4 form an irreducible component of the moduli space.  But at this section, we only prove that they are contained in the same irreducible component of the moduli space. The other part will be solved in Theorem \ref{component}. Here we recall the notations of Catanese and Pignatelli in \cite{Catanese} and the methods of Bauer and Pignatelli in \cite{Bauer 2}.

\begin{theorem}
The general surface in each of the 12 families in Theorem 6.4 admits a small deformation to a surface belonging to $\mathcal{M}_{1,0}$. In particular, all the surfaces in these 12 families are contained in the same irreducible component of the moduli space $\mathcal{M}_{\chi=5, K^{2}=9}$.
\end{theorem}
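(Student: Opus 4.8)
The plan is to exhibit, for a general surface $S_0$ of any of the remaining eleven families, an explicit one-parameter family $\{S_t\}$ of minimal surfaces of general type with $\chi=5$, $K^2=9$ whose special fibre is $S_0$ and whose general fibre lies in $\mathcal{M}_{1,0}$. Since the latter has the largest dimension ($32$) among the twelve families and carries only the ``negligible'' Horikawa singularities $(I_1),(III_1),(V)$ on its branch curve, it plays the role of the generic stratum, and every other family should be recovered from it by specialising the singular-fibre data. Throughout I would work with the structure theorem of Catanese and Pignatelli \cite{Catanese}, which recovers a genus $2$ fibration $f\colon S\to\mathds{P}^1$ from $V_1=f_*\omega_{S/\mathds{P}^1}$ together with the relative bicanonical datum: by Proposition 6.2 the bundle $V_1$ determines $\mathds{P}(V_1)\simeq\mathds{F}_r$ with $r\in\{0,2\}$, and by Theorem \ref{family 6} the twelve families are exactly the strata cut out in this parameter space by the admissible configurations of singularities of the branch curve, subject to the numerical identity of \cite{Horikawa 2}. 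As the Horikawa numerical invariant governing $K^2$ and $\chi$ is constant along such deformations, producing a connected chain of these families through $\mathcal{M}_{1,0}$ immediately places them all in one irreducible component.

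First I would treat the change of the base Hirzebruch surface, connecting each family $\mathcal{M}_{j,2}$ with $r=2$ to the corresponding $\mathcal{M}_{j,0}$ with $r=0$. This uses the standard fact that $\mathds{F}_2$ deforms flatly to $\mathds{F}_0$: on the level of Catanese--Pignatelli data it amounts to deforming the unbalanced bundle $V_1$ realising $\mathds{F}_2$ to a balanced one realising $\mathds{F}_0$, while carrying the branch datum (hence the singular-fibre type) along unchanged. One must check only that the admissibility and positivity conditions of \cite{Catanese} persist through this deformation, which is automatic here because the singularity configuration, and therefore the relevant numerical conditions, is unaltered. This already matches the systematic drop of exactly $1$ in the tabulated dimensions from $r=0$ to $r=2$.

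Next I would carry out the smoothing of singular fibres within the $r=0$ families, deforming each $\mathcal{M}_{j,0}$ with $j\geq 2$ to $\mathcal{M}_{1,0}$. The point is that a heavier Horikawa singularity of type $(II_k),(IV_k)$ or $(I_k),(III_k)$ with $k\geq 2$ admits a local deformation, realised on the defining datum $\xi$, that splits it into simpler $(I_1),(III_1),(V)$ singularities carrying the same total contribution to the invariant; under such a splitting the vertical components absorbed by the heavy fibre are released, so that the effective branch class increases from $6\Delta+6\Gamma$ or $6\Delta+8\Gamma$ to $6\Delta+10\Gamma$. Following the method of Bauer and Pignatelli \cite{Bauer 2}, I would perturb the relative bicanonical equation over a small disc so that these local smoothings occur simultaneously, obtaining a flat family whose general member has the generic configuration $v(I_1)+v(III_1)+v(V)=5$, i.e. lies in $\mathcal{M}_{1,0}$.

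The main obstacle, and the step requiring the most care, is verifying that every member $S_t$ of each constructed family is an \emph{admissible} genus $2$ fibration in the sense of \cite{Catanese} and that the resulting surface stays minimal of general type with $\chi=5$ and $K^2=9$: the local smoothings of the singular fibres must be globalised to a deformation of the branch datum over $\mathds{P}^1$, and the positivity conditions on $V_1$ and on $\xi$ that guarantee $S_t\in\mathcal{M}_{\chi=5,K^2=9}$ must be maintained at every $t$. Once this admissibility is confirmed, flatness gives the constancy of the invariants, and the chain of deformations built in the two steps above shows that all twelve families of Theorem \ref{family 6} lie in the single irreducible component of the moduli space containing $\mathcal{M}_{1,0}$, as claimed.
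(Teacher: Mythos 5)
Your strategy is not the paper's, and as it stands it has a genuine gap at its center. The paper never constructs any explicit one-parameter deformation. It argues indirectly: each of the twelve families lies in an irreducible component of the subscheme of the moduli space parametrizing surfaces with a canonical involution; any small deformation inside that subscheme preserves the number $\tau=5$ of isolated fixed points, hence stays inside the union of the twelve families (which by Section 6 exhaust the case $\tau=5$); and that component has dimension at least $32$ because, for the double cover $\varphi:\widehat{S}\rightarrow\widehat{\mathcal{C}}$ of the resolved conic bundle, the invariant part of $\varphi_{*}(\Omega_{\widehat{S}}^{1}\otimes \Omega_{\widehat{S}}^{2})$ is $\Omega_{\widehat{\mathcal{C}}}^{1}(\log \widehat{\mathcal{B}})\otimes \Omega_{\widehat{\mathcal{C}}}^{2}$ (Catanese), and a Riemann--Roch computation using $\chi(\mathcal{T}_{\widehat{\mathcal{C}}})=-14$ and $\mathcal{B}(\mathcal{B}-k_{\mathcal{C}})=46$ gives
$$h^{1}(\Omega_{\widehat{\mathcal{C}}}^{1}(\log \widehat{\mathcal{B}})\otimes \Omega_{\widehat{\mathcal{C}}}^{2})-h^{2}(\Omega_{\widehat{\mathcal{C}}}^{1}(\log \widehat{\mathcal{B}})\otimes \Omega_{\widehat{\mathcal{C}}}^{2})\geq -\chi(\Omega_{\widehat{\mathcal{C}}}^{1}(\log \widehat{\mathcal{B}})\otimes \Omega_{\widehat{\mathcal{C}}}^{2})=32.$$
Since among the twelve strata only $\mathcal{M}_{1,0}$ has dimension $32$, an irreducible set of dimension at least $32$ contained in their union must lie in the closure of $\mathcal{M}_{1,0}$. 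The theorem follows with no deformation ever being exhibited.

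Your proposal instead rests on two claims that are asserted but never established, and they are exactly the hard part. First, that each heavy Horikawa singularity ($(I_{k}),(III_{k})$ with $k\geq 2$, or $(II_{k}),(IV_{k})$) admits a local deformation of the fibration data splitting it into $(I_{1}),(III_{1}),(V)$ points of the same total weight, with the branch class jumping from $6\Delta+6\Gamma$ or $6\Delta+8\Gamma$ to $6\Delta+10\Gamma$; this is plausible and Horikawa-flavored, but it requires an actual construction on the level of the data $(V_{1},t,\varepsilon,\omega)$ or of the local double-cover equation, which you do not give --- note in particular that it cannot be a deformation of the branch curve inside a fixed linear system, precisely because the class changes. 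Second, and more seriously, the globalization: you yourself call the passage from local smoothings to a flat family of admissible genus $2$ fibrations over $\mathds{P}^{1}$ ``the main obstacle,'' but you offer no argument for it. Unobstructedness of this kind is exactly what one proves by a cohomological vanishing or dimension estimate, i.e.\ by a computation of the same nature (and difficulty) as the Euler-characteristic bound above, which is what the paper substitutes for the entire construction. The same issue affects your first step: deforming $\mathds{F}_{2}$ to $\mathds{F}_{0}$ is standard, but extending the branch datum over the total space of that deformation is again a vanishing statement you do not check. So the proposal defers, rather than supplies, the proof.
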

\begin{proof}

Each of the twelve families is contained in an irreducible component of the subscheme of the moduli space given by the surfaces having a canonical involution. We claim that the dimension of it is at least 32. Since for a small deformation in the subscheme preserving the number of the isolated fixed points of the involution, where the number is 5, then the twelve families are in the closure of the family $\mathcal{M}_{1,0}$.

\bigskip

Now we prove the claim. First, we use some of the techniques and results developed in \cite{Catanese}, which we will report in the case of a genus 2 fibration
$f:S\rightarrow \mathds{P}^{1}$. We can associate to the genus 2 fibration $f:S\rightarrow \mathds{P}^{1}$ the elements $V_{1},t,\varepsilon,\omega$, where
\begin{enumerate}[( 1 )]
\item $V_{1}=f_{*}w_{S|\mathds{P}^{1}}$ and $\mathrm{deg} V_{1}=\chi(\mathcal{O}_{S})+1=6$.
\item $t$ is an effective divisor of degree $K^{2}_{S}-2 \mathrm{deg} V_{1}+8=5$.
\item $\varepsilon$ is an element of $\mathrm{Ext}^{1}_{\mathcal{O}_{\mathds{P}^{1}}}(\mathcal{O}_{t},\mathrm{Sym}^{2}V_{1})/ \mathrm{Aut}_{\mathcal{O}_{\mathds{P}^{1}}}(\mathcal{O}_{t})$ giving the short exact sequence
    $$0 \rightarrow \mathrm{Sym}^{2}V_{1} \rightarrow f_{*}w_{S|\mathds{P}^{1}}^{2} \rightarrow \mathcal{O}_{t} \rightarrow 0$$
    where $\sigma_{2}:\mathrm{Sym}^{2}V_{1} \rightarrow V_{2}\doteq f_{*}w_{S|\mathds{P}^{1}}^{2}$ is the natural map induced by the tensor product of canonical sections of the fibres of $f$; $\sigma_{2}$ yields a rational map $\nu :\mathds{P}(V_{1})\dashrightarrow \mathds{P}(V_{2})$ (the relative version of 2-Veronese embedding $\mathds{P}^{1}\hookrightarrow \mathds{P}^{2}$) birational onto a conic bundle $\mathcal{C} \in |\mathcal{O}_{\mathds{P}(V_{2})}(2)\otimes \pi_{2}^{*}(\mathrm{det}V_{1})^{-2}|$ in $\mathds{P}(V_{2})$.
\item $\omega \in \mathds{P}(H^{0}(\mathds{P}^{1},\mathcal{A}_{6}(-22)))$ (Since $\mathrm{deg}(V_{3}^{+})^{2}=2(\mathrm{deg}V_{1}+\mathrm{deg}t)=22$), where $\mathcal{A}_{6}$ is a vector bundle obtained as a quotient of $\mathrm{Sym}^{3} f_{*}w_{S|\mathds{P}^{1}}^{2}$, the vector bundle of relative cubics on $\mathds{P}(f_{*}w_{S|\mathds{P}^{1}}^{2})$, by the subbundle of cubics vanishing on $\mathcal{C}$
    $$0\rightarrow f_{*}w_{S|\mathds{P}^{1}}^{2}\otimes \mathcal{O}_{\mathds{P}^{1}}(12)\rightarrow \mathrm{Sym}^{3} f_{*}w_{S|\mathds{P}^{1}}^{2} \rightarrow \mathcal{A}_{6} \rightarrow 0$$
    The branch curve $\mathcal{B}=\Delta \cap \mathcal{C}$ of the map $S \rightarrow \mathcal{C}$ is given by a map $\delta:\mathcal{O}_{\mathds{P}^{1}}(22) \rightarrow \mathcal{A}_{6}$, where $\Delta \in  |\mathcal{O}_{\mathds{P}(V_{2})}(3)\otimes \pi_{2}^{*}(V_{3}^{+})^{-2}|$ in $\mathds{P}(V_{2})$.
\end{enumerate}

\bigskip

The following proof is similar as that of Theorem 5.9  in \cite{Bauer 2}.

For the general surface in each of the twelve families, $\mathcal{C}$ has $\mathrm{deg} t = 5$ nodes (the vertices of the singular conics), none of them in $\mathcal{B}$, which is smooth. Let $\widehat{\mathcal{C}}$ be a
minimal desingularization of $\mathcal{C}$ and $\alpha: \widehat{\mathcal{C}}\rightarrow \mathcal{C}$; the 5 (-2)-curves $L_{i}$ on $\widehat{\mathcal{C}}$ give rise to 5 (-1)-curves on the associated double cover $\widehat{S}$, the exceptional locus of the birational morphism $\widehat{S} \rightarrow S$. The finite double cover $\varphi: \widehat{S} \rightarrow \widehat{\mathcal{C}}$ branches in $\widehat{\mathcal{B}}$, union of the pull-back of $\mathcal{B}$ with the (-2)-curves.

From the Theorem 2.16 in \cite{Catanese 1}, we have $\varphi_{*}(\Omega_{\widehat{S}}^{1}\otimes \Omega_{\widehat{S}}^{2})\backsimeq (\Omega_{\widehat{\mathcal{C}}}^{1}(log \widehat{\mathcal{B}})\otimes \Omega_{\widehat{\mathcal{C}}}^{2}) \oplus (\Omega_{\widehat{\mathcal{C}}}^{1}\otimes \Omega_{\widehat{\mathcal{C}}}^{2}(\frac{1}{2}\widehat{\mathcal{B}}))$ and $\Omega_{\widehat{\mathcal{C}}}^{1}(log \widehat{\mathcal{B}})\otimes \Omega_{\widehat{\mathcal{C}}}^{2}$ is the invariant part.

The morphism $\beta:\widehat{\mathcal{C}}\rightarrow \mathds{P}(V_{1})$ is the contraction of the strict transforms of each component of the singular conics, so of $2 \mathrm{deg} t = 10$ exceptional curves $E_{i}$ of the first kind. If $\mathcal{T}$ denotes the tangent sheaf, then $\chi(\mathcal{T}_{\mathds{P}(V_{1})})=\mathrm{deg}(ch(\mathcal{T}_{\mathds{P}(V_{1})})\cdot td(\mathcal{T}_{\mathds{P}(V_{1})}))_{2}=2K^{2}_{\mathds{P}(V_{1})}- 10\chi(\mathcal{O}_{\mathds{P}(V_{1})})=6$ by Hirzebruch-Riemann-Roch theorem. Besides, we have
$\mathcal{T}_{\widehat{\mathcal{C}}/ \mathds{F}_{0}} \simeq \oplus_{i=1}^{10}\mathcal{O}_{E_{i}}(1)$ by Lemma 22 in \cite{Horikawa 1} and the following exact sequence
$$0\rightarrow \mathcal{T}_{\widehat{\mathcal{C}}} \rightarrow \beta^{*}\mathcal{T}_{\mathds{F}_{0}} \rightarrow \mathcal{T}_{\widehat{\mathcal{C}}/ \mathds{F}_{0}}\rightarrow 0,$$
then $\chi(\mathcal{T}_{\widehat{\mathcal{C}}})=\chi(\mathcal{T}_{\mathds{P}(V_{1})})-4\mathrm{deg}t=6-20=-14$.

Let $H$ be the tautological bundle of $\mathds{P}(V_{2})$ and $\Gamma$ be the fibre of $\pi_{2}:\mathds{P}(V_{2})\rightarrow \mathds{P}^{1}$, we have the numerical equivalence $\mathcal{C} \sim 2H-12\Gamma$, $\Delta \sim 3H-22\Gamma$ and $k_{\mathds{P}(V_{2})} \sim -3H+21\Gamma$. Moreover, $H\Gamma^{2}=\Gamma^{3}=0$, $H^{2}\Gamma=1$ and $H^{3}-23H^{2}\Gamma=0$ by the Theorem 5.1 in \cite{BPV}, so we can calculate $\mathcal{B}(\mathcal{B}-k_{\mathcal{C}})=\Delta [\Delta-(\mathcal{C}+k_{\mathds{P}(V_{2})})]\mathcal{C}=(3H-22\Gamma)(4H-31\Gamma)(2H-12\Gamma)=46$, and

\begin{tabular}{m{2cm} m{8cm}}
& $h^{1}(\Omega_{\widehat{\mathcal{C}}}^{1}(log \widehat{\mathcal{B}})\otimes \Omega_{\widehat{\mathcal{C}}}^{2})- h^{2}(\Omega_{\widehat{\mathcal{C}}}^{1}(log \widehat{\mathcal{B}})\otimes \Omega_{\widehat{\mathcal{C}}}^{2})$ \\
& $\geq -\chi(\Omega_{\widehat{\mathcal{C}}}^{1}(log \widehat{\mathcal{B}})\otimes \Omega_{\widehat{\mathcal{C}}}^{2})=-\chi(\Omega_{\widehat{\mathcal{C}}}^{1}\otimes \Omega_{\widehat{\mathcal{C}}}^{2}) -\chi(\mathcal{O}_{\widehat{\mathcal{B}}}(\Omega_{\widehat{\mathcal{C}}}^{2}))$  \\
& $=-\chi(\mathcal{T}_{\widehat{\mathcal{C}}}) -\chi(\Omega_{\widehat{\mathcal{C}}}^{2}) +\chi(\Omega_{\widehat{\mathcal{C}}}^{2}(-\widehat{\mathcal{B}}))
=14+\frac{1}{2}(k_{\widehat{\mathcal{C}}}-\widehat{\mathcal{B}})(-\widehat{\mathcal{B}})$ \\
& $=9+\frac{1}{2}(\mathcal{B})(\mathcal{B}-k_{\mathcal{C}})=32$
\end{tabular}

\end{proof}

\begin{remark}
For a general surface $S$ in $\mathcal{M}_{1,0}$, we have $h^{2}(\Omega_{\widehat{\mathcal{C}}}^{1}(log \widehat{\mathcal{B}})\otimes \Omega_{\widehat{\mathcal{C}}}^{2})=h^{0}(S,\mathcal{T}_{S})=0$ since $S$ is of general type. By Theorem 6.4 and Theorem 6.5, we can get
$$32=\mathrm{dim} \mathcal{M}_{1,0}
\geq h^{1}(\Omega_{\widehat{\mathcal{C}}}^{1}(log \widehat{\mathcal{B}})\otimes \Omega_{\widehat{\mathcal{C}}}^{2})- h^{2}(\Omega_{\widehat{\mathcal{C}}}^{1}(log \widehat{\mathcal{B}})\otimes \Omega_{\widehat{\mathcal{C}}}^{2})
\geq -\chi(\Omega_{\widehat{\mathcal{C}}}^{1}(log \widehat{\mathcal{B}})\otimes \Omega_{\widehat{\mathcal{C}}}^{2})=32,$$
which means $h^{0}(\Omega_{\widehat{\mathcal{C}}}^{1}(log \widehat{\mathcal{B}})\otimes \Omega_{\widehat{\mathcal{C}}}^{2})=0$ and $h^{1}(\Omega_{\widehat{\mathcal{C}}}^{1}(log \widehat{\mathcal{B}})\otimes \Omega_{\widehat{\mathcal{C}}}^{2})=32$.
\end{remark}

\bigskip

\section{the moduli}

In the above sections, we classify all pairs $(S,i)$, where $S$ is a minimal surface with $K_{S}^{2}=9,\chi(S)=5$ and $i$ is a canonical involution on $S$, finding 6 families.

\bigskip

\begin{tabular}{m{1cm} m{1.5cm} m{9.7cm}}
\hline
Family & Theorem & Short Description \\
\hline
$\mathcal{M}_{1}$ & \ref{family 1}  & double cover of $\mathds{F}_{2}$ branched along the section $\Delta_{\infty}$ and a curve $B$, where $B$ is in $|9\Delta_{\infty}+18\Gamma|$ with eight ordinary 4-tuple points \\
\hline
\end{tabular}

\begin{tabular}{m{1cm} m{1.5cm} m{9.7cm}}
\hline
$\mathcal{M}_{2}$ & \ref{family 2}  & double cover of $\mathds{F}_{r}(0\leq r \leq 3)$ branched along a curve $B$ in $|8\Delta_{\infty}+(10+4r)\Gamma|$ with  seven ordinary 4-tuple points and one (3,3)-point \\
\hline
$\mathcal{M}_{3}$ & \ref{family 3}  &  double cover of weak Del Pezzo surface of degree 2 branched along a curve $B$ and three (-2)-curves $A_{i}$, where $B$ is in $|-5K|$ and $BA_{i}=A_{i}A_{j}=0$ \\
\hline
$\mathcal{M}_{4}$ & \ref{family 4}  & double cover of $\mathds{F}_{r}(0\leq r \leq 2)$ branched along a fibre $B_{1}$ and a curve $B_{2}$, where $B_{2}$ is in $|8\Delta_{\infty}+(9+4r)\Gamma|$ and has one (3,3)-point and five singular points $x_{1},\cdots,x_{5}$ of multiplicity 4, with $x_{4}\in B_{1}$ and $x_{5}$ infinitely near to $x_{4}$ belonging to the strict transform of $B_{1}$ \\
\hline
$\mathcal{M}_{5}$ & \ref{family 5}  & double cover of weak Del Pezzo surface of degree 6 branched along a curve in $|-4K|$ with three (3,3)-points \\
\hline
$\mathcal{M}_{6}$ & \ref{family 6}  & double cover of $\mathds{F}_{r}(r=0,2)$ branched along a curve in $|6\Delta+10\Gamma|$ or $|6\Delta+8\Gamma|$ or $|6\Delta+6\Gamma|$, having the possible singularities in Table \\
\hline
\end{tabular}

\bigskip

\begin{pro} \label{dimension}
The 6 families $\mathcal{M}_{i}(1\leq i\leq 6)$ is connected and unirational of respective dimensions 28, 27, 33, 32, 31 and 32.
\end{pro}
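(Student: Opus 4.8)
The plan is to treat the five families $\mathcal{M}_1,\dots,\mathcal{M}_5$ uniformly through their double-cover descriptions, and to obtain $\mathcal{M}_6$ from the computation already made for $\mathcal{M}_{1,0}$ in Theorem \ref{family 6}. For each $i$ with $1\le i\le 5$, Theorems \ref{family 1}--\ref{family 5} exhibit the general surface $S$ as the minimal resolution of a double cover of a fixed rational surface $W_i$ (namely $\mathds{F}_2,\mathds{F}_0,\mathds{P}^2,\mathds{F}_0,\mathds{P}^2$ for $i=1,\dots,5$), determined by a configuration of blown-up points on $W_i$ together with a branch divisor $B\in|\Omega_i|$, the distinguished $(-2)$-curves being forced by the configuration. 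First I would assemble these data into a single parameter space $\mathcal{P}_i$, fibred over the variety of admissible point configurations with fibre the projective space $|\Omega_i|$. Since the configuration base is an open subset of a product of copies of $W_i$ and of the exceptional $\mathds{P}^1$'s (to carry the infinitely near and incidence conditions), it is irreducible and rational, hence so is $\mathcal{P}_i$.

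Next I would show that the tautological map $\mathcal{P}_i\to\mathcal{M}_i$ is dominant with generic fibre an $\mathrm{Aut}(W_i)$-orbit. Dominance is exactly the content of Theorems \ref{family 1}--\ref{family 5}; the fibre computation uses the corollary to Proposition 3.2, by which a general $S$ carries a unique canonical involution, so that $\widehat T$, its contraction to $W_i$, the point configuration and the branch divisor are all recovered from $S$ up to the action of $\mathrm{Aut}(W_i)$ (the residual automorphisms being finite, as $S$ is of general type). Thus each $\mathcal{M}_i$ is the image of an irreducible rational variety, hence connected and unirational, and
\[
\dim\mathcal{M}_i=\dim\mathcal{P}_i-\dim\mathrm{Aut}(W_i)=\big(\#\{\text{point parameters}\}+\dim|\Omega_i|\big)-\dim\mathrm{Aut}(W_i).
\]
For $\mathcal{M}_2$ and $\mathcal{M}_4$ one first reduces to $r=0$, using that the $r>0$ strata lie in the closure of the $r=0$ stratum, as noted after Propositions 4.3 and 5.3.

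The remaining work is numerical. The automorphism dimensions are $\dim\mathrm{Aut}(\mathds{P}^2)=8$, $\dim\mathrm{Aut}(\mathds{F}_0)=6$, $\dim\mathrm{Aut}(\mathds{F}_2)=7$. The point parameters, read off from the configurations in Theorems \ref{family 1}--\ref{family 5}, are $16,17,11,11,15$ for $i=1,\dots,5$ (each ordinary point contributing its two coordinates, each infinitely near point one, and the incidence-constrained point $p_3$ of $\mathcal{M}_4$ contributing none). For $\dim|\Omega_i|$ I would use $h^0(\Omega_i)=\chi(\Omega_i)$, valid because $h^i(\Omega_i)=0$ for $i>0$ by Kawamata--Viehweg vanishing: in each case $\Omega_i-K_P$ is big and nef, being the divisor $L$ (or $L$ plus an effective combination of $(-2)$- and $(-1)$-curves) appearing in the existence proofs. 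A Riemann--Roch computation then gives $\chi(\Omega_i)=20,17,31,28,25$, so $\dim|\Omega_i|=19,16,30,27,24$, and substitution yields dimensions $28,27,33,32,31$. Finally $\mathcal{M}_6$ is irreducible, unirational and of dimension $32$ directly from $\dim\mathcal{M}_{1,0}=\dim|2\widehat\delta|+15-\dim\mathrm{Aut}(\mathds{F}_0)=32$ in Theorem \ref{family 6}, together with Theorem 6.5, which places all twelve strata in the closure of $\mathcal{M}_{1,0}$.

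I expect the main obstacle to be the two bookkeeping points rather than any deep step. First, one must verify case by case that $\Omega_i-K_P$ is genuinely big and nef, since nefness can fail naively on the distinguished $(-2)$-curves and must be checked there (for instance $(L+\Delta_\infty)\cdot\Delta_\infty=0$ in $\mathcal{M}_1$), so that the vanishing, and hence $h^0=\chi$, are legitimate. Second, one must count the point parameters correctly in the presence of the infinitely near and incidence conditions and pair them with the right $\dim\mathrm{Aut}(W_i)$, so that no automorphisms are double-counted and the arithmetic lands on $28,27,33,32,31,32$.
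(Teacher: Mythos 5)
Your proposal is correct and follows essentially the same route as the paper: the paper also computes $\dim\mathcal{M}_i=\bigl(h^{0}(P,2\widehat{\delta})-1\bigr)+\#\{\text{point parameters}\}-\dim\mathrm{Aut}(W_i)$ with exactly your numbers ($h^{0}=20,17,31,28,25$; parameters $16,17,11,11,15$; automorphism dimensions $7,6,8,6,8$), and handles $\mathcal{M}_6$ by quoting $\dim\mathcal{M}_{1,0}=32$ from Theorem \ref{family 6} together with Theorem 6.5. Your additional scaffolding (the explicit parameter space with $\mathrm{Aut}(W_i)$-orbit fibres via uniqueness of the canonical involution, and the Kawamata--Viehweg justification that $h^{0}(\Omega_i)=\chi(\Omega_i)$) only makes explicit what the paper leaves implicit when it cites Theorems \ref{family 1}--\ref{family 5} for the values of $h^{0}$.
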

\begin{proof}
We have explained that each family is connected and unirational separately in the above sections. Now we calculate the dimension of each family.

For a general surface in $\mathcal{M}_{1}$, we can assume that $P$ is the blowing-up of $\mathds{F}_{2}$ at 8 points $p_{i}(1\leq i\leq8)$. By Theorem \ref{family 1}  we have $h^{0}(P,2\widehat{\delta})=20$. Since these 8 points in $\mathds{F}_{2}$ depend on 16 parameters, the dimension of this family is $\mathrm{dim} \mathcal{M}_{1}=(20-1)+16-7=28$.

For a general surface in $\mathcal{M}_{2}$, we can assume that $P$ is the blowing-up of $\mathds{F}_{0}$ at 9 points $p_{i}(1\leq i\leq9)$, and $p_{1}$ is in the preimage of $p_{2}$. By Theorem \ref{family 2} we have $h^{0}(P,2\widehat{\delta})=17$. Since these 9 points in $\mathds{F}_{0}$ depend on 17 parameters, the dimension of this family is $\mathrm{dim}\mathcal{M}_{2}=(17-1)+17-6=27$.

For a general surface in $\mathcal{M}_{3}$, we can assume that $P$ is the blowing-up of $\mathds{P}^{2}$ at 7 points $p_{i}(1\leq i\leq 7)$, and $p_{2k-1}$ is in the preimage of $p_{2k}$ where $k=1,2,3$. By Theorem \ref{family 3} we have $h^{0}(P,2\widehat{\delta})=31$. Since these 7 points in $\mathds{P}^{2}$ depend on 11 parameters, the dimension of this family is $\mathrm{dim} \mathcal{M}_{3}=(31-1)+11-8=33$.

For a general surface in $\mathcal{M}_{4}$, we can assume that $P$ is the blowing-up of $\mathds{F}_{0}$ at 7 points $p_{i}(1\leq i \leq 7)$.  $p_{1}$ is in the preimage of $p_{2}$, while $p_{3}$ is the intersection of the preimage of $p_{4}$ and the strict transform of the fibre $B_{1}$ passing through $p_{4}$. By Theorem \ref{family 4} we have $h^{0}(P,2\widehat{\delta})=28$. Since these 7 points in $\mathds{F}_{0}$ depend on 11 parameters, the dimension of this family is $\mathrm{dim} \mathcal{M}_{4}=(28-1)+11-6=32$.

For a general surface in $\mathcal{M}_{5}$, we can assume that P is the blowing-up of $\mathds{P}^{2}$ at 9 points $p_{i}(1\leq i\leq 9)$, and $p_{2k-1}$ is in the preimage of $p_{2k}$ where $k=1,2,3$. By Theorem \ref{family 5} we have $h^{0}(P,2\widehat{\delta})=25$. Since these 9 points in $\mathds{P}^{2}$ depend on 15 parameters, the dimension of this family is $\mathrm{dim}\mathcal{M}_{5}=(25-1)+15-8=31$.

For the family $\mathcal{M}_{6}$, we have done in Theorem \ref{family 6}.
\end{proof}

\begin{remark}
By Kuranishi's theorem each irreducible component of the moduli space of minimal surfaces of general type with $K_{S}^{2}=9,\chi(S)=5$ has dimension at least $10\chi -2 K^{2}= 32$. It follows that the three families $\mathcal{M}_{1}$, $\mathcal{M}_{2}$ and $\mathcal{M}_{5}$ are not irreducible components of the moduli space $\mathcal{M}_{\chi=5, K^{2}=9}$. Observe that the general point of the irreducible component, in which the family $\mathfrak{M}_{1}$ or $\mathcal{M}_{2}$ is contained, is a surface without a canonical involution. In fact, it cannot be in $\mathcal{M}_{i}(3\leq i \leq 5)$ because $\tau$ is invariant under deformations preserving the involution.
\end{remark}

In the following theorem, we prove that the family of surfaces with a genus 2 fibration forms an irreducible component of the moduli space.

\begin{theorem}\label{component}
The family $\mathcal{M}_{6}$ is an irreducible components of the moduli space $\mathcal{M}_{\chi=5, K^{2}=9}$.
\end{theorem}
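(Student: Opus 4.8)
The plan is to show that the germ of $\mathcal{M}_{\chi=5, K^{2}=9}$ at a general point of $\mathcal{M}_6$ is smooth of dimension exactly $32$; since $\overline{\mathcal{M}_6}$ is itself irreducible of dimension $32$, this will force it to be a whole irreducible component. By Theorem \ref{family 6} and Theorem 6.5 all twelve families $\mathcal{M}_{i,j}$ lie in the closure of the top-dimensional family $\mathcal{M}_{1,0}$, so $\overline{\mathcal{M}_6}=\overline{\mathcal{M}_{1,0}}$ is irreducible of dimension $32$ (Proposition \ref{dimension}). It is contained in some irreducible component $Z$ of $\mathcal{M}_{\chi=5, K^{2}=9}$, and by Kuranishi's theorem $\dim Z\geq 10\chi-2K^{2}=32$. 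Thus it suffices to prove that for a general $S\in\mathcal{M}_{1,0}$ one has $\dim_{[S]}\mathcal{M}_{\chi=5, K^{2}=9}\leq 32$.

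Since $S$ is minimal of general type we have $h^{0}(S,\mathcal{T}_S)=0$ and $\chi(\mathcal{T}_S)=2K_S^{2}-10\chi=-32$, whence $h^{1}(S,\mathcal{T}_S)=32+h^{2}(S,\mathcal{T}_S)$. The local moduli has dimension between $h^{1}-h^{2}=32$ and $h^{1}$, so the desired bound (and in fact smoothness of the moduli at $[S]$) follows once the obstruction space $H^{2}(S,\mathcal{T}_S)$ vanishes. Hence the whole theorem reduces to the single cohomological statement $h^{2}(S,\mathcal{T}_S)=0$.

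To compute it I would pass to $\widehat{S}$: blowing up the five isolated fixed points does not change the obstruction space, $h^{2}(\widehat{S},\mathcal{T}_{\widehat{S}})=h^{2}(S,\mathcal{T}_S)$ (the relevant higher direct image vanishes and $H^{2}$ of a skyscraper is zero). By Serre duality $h^{2}(\widehat{S},\mathcal{T}_{\widehat{S}})=h^{0}(\widehat{S},\Omega^{1}_{\widehat{S}}\otimes K_{\widehat{S}})$, and Catanese's splitting (Theorem 2.16 in \cite{Catanese 1}) already used in Theorem 6.5 gives
$$H^{0}(\widehat{S},\Omega^{1}_{\widehat{S}}\otimes K_{\widehat{S}})\cong H^{0}(\Omega^{1}_{\widehat{\mathcal{C}}}(\log\widehat{\mathcal{B}})\otimes K_{\widehat{\mathcal{C}}})\oplus H^{0}\!\left(\Omega^{1}_{\widehat{\mathcal{C}}}\otimes K_{\widehat{\mathcal{C}}}(\tfrac{1}{2}\widehat{\mathcal{B}})\right),$$
the first, $\widehat{i}$-invariant, summand being exactly the space shown to vanish in Remark 6.6. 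So everything comes down to the vanishing of the anti-invariant summand $h^{0}(\Omega^{1}_{\widehat{\mathcal{C}}}\otimes K_{\widehat{\mathcal{C}}}(\tfrac12\widehat{\mathcal{B}}))=0$.

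This last vanishing is the main obstacle. I would attack it through the ruling $\pi:\widehat{\mathcal{C}}\to\mathds{P}^{1}$ inherited from the conic bundle $\mathcal{C}$. On a general fibre $\Gamma\cong\mathds{P}^{1}$ one has $K_{\widehat{\mathcal{C}}}\cdot\Gamma=-2$ and $\widehat{\mathcal{B}}\cdot\Gamma=6$, so the twisting line bundle has degree $1$ on $\Gamma$; restricting $\Omega^{1}_{\widehat{\mathcal{C}}}$ via $0\to\pi^{*}\Omega^{1}_{\mathds{P}^{1}}\to\Omega^{1}_{\widehat{\mathcal{C}}}\to\Omega^{1}_{\widehat{\mathcal{C}}/\mathds{P}^{1}}\to 0$ and pushing the twisted sheaf forward to $\mathds{P}^{1}$, one obtains a rank-two sheaf on $\mathds{P}^{1}$ whose splitting type must be determined. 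The claim is that for a general (hence smooth) branch curve $\widehat{\mathcal{B}}$ both summands have negative degree, so there are no global sections; this is precisely the point where the general position of the branch data of $\mathcal{M}_{1,0}$ is used, and the delicate part is controlling the contribution of the singular fibres and of the ten exceptional curves of $\beta:\widehat{\mathcal{C}}\to\mathds{P}(V_1)$. Granting $h^{0}(\Omega^{1}_{\widehat{\mathcal{C}}}\otimes K_{\widehat{\mathcal{C}}}(\tfrac12\widehat{\mathcal{B}}))=0$, we conclude $h^{2}(S,\mathcal{T}_S)=0$, the moduli space is smooth of dimension $32$ at $[S]$, and therefore $\overline{\mathcal{M}_6}=Z$ is an irreducible component of $\mathcal{M}_{\chi=5, K^{2}=9}$.
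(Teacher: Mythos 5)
Your overall framework coincides with the paper's: reduce the theorem to the vanishing $h^{2}(S,\mathcal{T}_{S})=0$ for a general $S\in\mathcal{M}_{1,0}$ (so that $h^{1}(S,\mathcal{T}_{S})=32$ bounds the local dimension of the moduli space, which together with $\dim\mathcal{M}_{6}=32$ and the Kuranishi lower bound forces $\overline{\mathcal{M}_{6}}$ to be a whole component), pass to $\widehat{S}$, apply Serre duality and Catanese's splitting of $\varphi_{*}(\Omega^{1}_{\widehat{S}}\otimes\Omega^{2}_{\widehat{S}})$, and dispose of the invariant summand by Remark 6.6. All of this is correct. But at the decisive step --- the vanishing of the anti-invariant summand $h^{0}(\Omega^{1}_{\widehat{\mathcal{C}}}\otimes\Omega^{2}_{\widehat{\mathcal{C}}}(L))$, where $2L\equiv\widehat{\mathcal{B}}$ --- you only sketch a strategy and then explicitly ``grant'' the result. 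This is a genuine gap, and it is the heart of the whole computation: everything preceding it is formal bookkeeping.

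Moreover, the route you sketch (pushing forward along the conic-bundle ruling and determining a splitting type) is harder than your outline suggests: as you note yourself, the restriction of $\Omega^{1}_{\widehat{\mathcal{C}}}\otimes\Omega^{2}_{\widehat{\mathcal{C}}}(L)$ to a general fibre is $\mathcal{O}(-1)\oplus\mathcal{O}(1)$, which has a two-dimensional space of sections, so nothing vanishes fibrewise and you must control a rank-two direct image sheaf, including its behaviour at the five singular fibres and at the ten exceptional curves of $\beta$ --- precisely the part you leave open. The paper avoids this entirely: Serre duality on $\widehat{\mathcal{C}}$ converts the needed vanishing into $h^{2}(\mathcal{T}_{\widehat{\mathcal{C}}}(-L))=0$, and the blow-up sequence $0\to\mathcal{T}_{\widehat{\mathcal{C}}}(-L)\to\beta^{*}\mathcal{T}_{\mathds{F}_{0}}(-L)\to\mathcal{T}_{\widehat{\mathcal{C}}/\mathds{F}_{0}}(-L)\to 0$ finishes the job: the relative term $\oplus_{i=1}^{5}\bigl(\mathcal{O}_{E_{x_{i}}}(1)\oplus\mathcal{O}_{E_{y_{i}}}(1)\bigr)(-L)$ has no cohomology at all because $L\cdot E_{x_{i}}=L\cdot E_{y_{i}}=2$, while $\beta^{*}\mathcal{T}_{\mathds{F}_{0}}(-L)$ splits as the two explicit line bundles $\mathcal{O}(-\Gamma_{1}-8\Gamma_{2}+2\Sigma_{i}(E_{x_{i}}+E_{y_{i}}))\oplus\mathcal{O}(-3\Gamma_{1}-6\Gamma_{2}+2\Sigma_{i}(E_{x_{i}}+E_{y_{i}}))$, whose $h^{0}$ and $h^{2}$ vanish for a general choice of points (the first has only $h^{1}=10$, the second no cohomology). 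This gives $h^{2}(\widehat{S},\mathcal{T}_{\widehat{S}})=0$ and $h^{1}(\widehat{S},\mathcal{T}_{\widehat{S}})=42$, and after blowing down, $h^{1}(S,\mathcal{T}_{S})=32$ and $h^{2}(S,\mathcal{T}_{S})=0$. If you replace your granted claim by this Serre-duality-plus-blow-up computation, your argument becomes complete.
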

\begin{proof}
By Theorem \ref{dimension}, $\mathcal{M}_{6}$ is of dimension 32. To prove that it is an irreducible component of the moduli space $\mathcal{M}_{\chi=5, K^{2}=9}$, we only need to show that any general surface $S$ in this family satisfies $h^{1}(S,\mathcal{T}_{S})=32$.

For the general surface $S$ in $\mathcal{M}_{6}$, we have the construction described in Theorem 6.5, which is the following commutative graph
$$\xymatrix{
    \widehat{S} \ar[r]^{\varphi}\ar[d]_{\epsilon} & \widehat{\mathcal{C}} \ar[d]_{\beta} \\
    S \ar@{-->}[r]^{} &  \mathds{F}_{0}
    }$$
where $\epsilon$ is the blowing-up at 5 points, $\varphi$ is a finite double cover, and $\beta$ is the blowing-up at 10 points. Besides, let $x_{i}, y_{i}(1\leq i \leq 5)$ be the 10 points on $\mathds{F}_{0}$, and $E_{x_{i}},E_{y_{i}}(1\leq i \leq 5)$ be the corresponding (-1)-curves on $\widehat{\mathcal{C}}$, then the branch curve $\widehat{\mathcal{B}}$ of the finite double cover $\varphi$ is in $|6\Gamma_{1}+16\Gamma_{2}-4\Sigma_{i=1}^{5}(E_{x_{i}}+E_{y_{i}})|$. Actually, we have $\widehat{\mathcal{B}}\equiv \widehat{\mathcal{B}}_{1}+\Sigma_{i=1}^{5}L_{i}$, where $L_{i}\equiv \Gamma_{2}-E_{x_{i}}-E_{y_{i}}$ is a base part of $|\widehat{\mathcal{B}}|$ and $\widehat{\mathcal{B}}_{1} \equiv 6\Gamma_{1}+11\Gamma_{2}-3\Sigma_{i=1}^{5}(E_{x_{i}}+E_{y_{i}})$.

We calculate the dimension of $H^{i}(\widehat{S},\mathcal{T}_{\widehat{S}})$ first. Since $h^{i}(\widehat{S},\mathcal{T}_{\widehat{S}})=h^{2-i}(\widehat{S},\Omega_{\widehat{S}}^{1}\otimes \Omega_{\widehat{S}}^{2})$ by Serre duality and $\varphi_{*}(\Omega_{\widehat{S}}^{1}\otimes \Omega_{\widehat{S}}^{2})\simeq
(\Omega_{\widehat{\mathcal{C}}}^{1}(log \widehat{\mathcal{B}})\otimes \Omega_{\widehat{\mathcal{C}}}^{2}) \oplus
(\Omega_{\widehat{\mathcal{C}}}^{1}\otimes \Omega_{\widehat{\mathcal{C}}}^{2}(L))$ by Theorem 2.16 in \cite{Catanese 1}, where we denote $2L\equiv \widehat{\mathcal{B}}$, then we can get $h^{i}(\widehat{S},\mathcal{T}_{\widehat{S}})= h^{2-i}(\widehat{\mathcal{C}},\Omega_{\widehat{\mathcal{C}}}^{1}(log \widehat{\mathcal{B}})\otimes \Omega_{\widehat{\mathcal{C}}}^{2})+ h^{2-i}(\widehat{\mathcal{C}},\Omega_{\widehat{\mathcal{C}}}^{1}\otimes \Omega_{\widehat{\mathcal{C}}}^{2}(L))$ for $i=0,1,2$.

For the invariant part $\Omega_{\widehat{\mathcal{C}}}^{1}(log \widehat{\mathcal{B}}) \otimes \Omega_{\widehat{\mathcal{C}}}^{2}$, we have $h^{0}(\Omega_{\widehat{\mathcal{C}}}^{1}(log \widehat{\mathcal{B}}) \otimes \Omega_{\widehat{\mathcal{C}}}^{2})= h^{2}(\Omega_{\widehat{\mathcal{C}}}^{1}(log \widehat{\mathcal{B}}) \otimes \Omega_{\widehat{\mathcal{C}}}^{2})=0$ and $h^{2}(\Omega_{\widehat{\mathcal{C}}}^{1}(log \widehat{\mathcal{B}}) \otimes \Omega_{\widehat{\mathcal{C}}}^{2})=32$ by Remark 6.6. For the anti-invariant part $\Omega_{\widehat{\mathcal{C}}}^{1}\otimes \Omega_{\widehat{\mathcal{C}}}^{2}(L)$, we have $h^{i}(\widehat{\mathcal{C}}, \Omega_{\widehat{\mathcal{C}}}^{1}\otimes \Omega_{\widehat{\mathcal{C}}}^{2}(L)) =
h^{2-i}(\widehat{\mathcal{C}}, \mathcal{T}_{\widehat{\mathcal{C}}}(-L))$ by Serre duality and the exact sequence
$$0\rightarrow \mathcal{T}_{\widehat{\mathcal{C}}}(-L) \rightarrow \beta^{*}\mathcal{T}_{\mathds{F}_{0}}(-L) \rightarrow \mathcal{T}_{\widehat{\mathcal{C}}/ \mathds{F}_{0}}(-L)\rightarrow 0.$$
Since $\mathcal{T}_{\widehat{\mathcal{C}}/ \mathds{F}_{0}} \simeq \oplus_{i=1}^{5}(\mathcal{O}_{E_{x_{i}}}(1)\oplus \mathcal{O}_{E_{y_{i}}}(1))$ by Lemma 22 in \cite{Horikawa 1} and $E_{x_{i}}L_{i}=E_{y_{i}}L_{i}=2$, we have  $h^{i}(\mathcal{T}_{\widehat{\mathcal{C}}/ \mathds{F}_{0}} (-L))=0$ for any $i$. Since $\beta^{*}\mathcal{T}_{\mathds{F}_{0}}(-L)\simeq \mathcal{O}(-\Gamma_{1}-8\Gamma_{2}+2\Sigma_{i=1}^{5}(E_{x_{i}}+E_{y_{i}}))\oplus \mathcal{O}(-3\Gamma_{1}-6\Gamma_{2}+2\Sigma_{i=1}^{5}(E_{x_{i}}+E_{y_{i}}))$, we can calculate
\begin{equation*}
  \begin{cases}
    h^{i}(\mathcal{O}(-\Gamma_{1}-8\Gamma_{2}+2\Sigma_{i=1}^{5}(E_{x_{i}}+E_{y_{i}})))=0, i=0,2 \\
    h^{1}(\mathcal{O}(-\Gamma_{1}-8\Gamma_{2}+2\Sigma_{i=1}^{5}(E_{x_{i}}+E_{y_{i}})))=10, \\
    h^{i}(\mathcal{O}(-3\Gamma_{1}-6\Gamma_{2}+2\Sigma_{i=1}^{5}(E_{x_{i}}+E_{y_{i}})))=0, i=0,1,2. \\
  \end{cases}
\end{equation*}
Thus, we have $h^{i}(\widehat{\mathcal{C}}, \Omega_{\widehat{\mathcal{C}}}^{1}\otimes \Omega_{\widehat{\mathcal{C}}}^{2}(L))=0$ for $i=0,2$ and $h^{1}(\widehat{\mathcal{C}}, \Omega_{\widehat{\mathcal{C}}}^{1}\otimes \Omega_{\widehat{\mathcal{C}}}^{2}(L)) =10$. In conclusion, we can get $h^{0}(\widehat{S},\mathcal{T}_{\widehat{S}})=h^{2}(\widehat{S},\mathcal{T}_{\widehat{S}})=0$ and $h^{1}(\widehat{S},\mathcal{T}_{\widehat{S}})=32+10=42$.

Now we calculate the dimension of $H^{i}(S,\mathcal{T}_{S})$ using the following exact sequence
$$ 0\rightarrow \mathcal{T}_{\widehat{S}} \rightarrow \epsilon^{*}\mathcal{T}_{S} \rightarrow \mathcal{T}_{\widehat{S}/S} \rightarrow 0 .$$
We remark that $h^{0}(S,\mathcal{T}_{S})=0$ and $h^{i}(\widehat{S},\epsilon^{*}\mathcal{T}_{S})=h^{i}(S,\mathcal{T}_{S})$ for any $i$. Since $\epsilon$ is the blowing-up at 5 points, then we get $h^{0}(\mathcal{T}_{\widehat{S}/S})=10$ and $h^{1}(\mathcal{T}_{\widehat{S}/S})=0$ by Lemma 22 in \cite{Horikawa 1}. Hence, we have $h^{2}(S,\mathcal{T}_{S})=0$, $h^{2}(\widehat{S},\mathcal{T}_{\widehat{S}})=0$ and $h^{1}(S,\mathcal{T}_{S})=32$.
\end{proof}

\bigskip

\section*{Appendix}

Here, we give the algorithm to judge whether an effective divisor on $\mathds{F}_{0}$  is nef or not. In this process, we only need to solve a system of linear equations and find the irreducible decompostion of any given effective divisor. It's similar for the divisor on $\mathds{P}^{2}$.

As a representative, we consider the example in Theorem \ref{family 2}. Let $\Delta_{\infty}$ be the section at infinity and $\Gamma$ be a fibre of the Hirzebruch surface $\mathds{F}_{0}$. We assume that $P$ is the blowing-up of $\mathds{F}_{0}$ at 9 points $p_{i}(1\leq i\leq9)$, where $p_{1}$ is in the preimage of $p_{2}$. If we denote $f:P\rightarrow \mathds{F}_{0}$ and $L\doteq 10\Delta_{\infty}+12\Gamma-5\Sigma_{i=1}^{9}E_{i}$ where $E_{i}=f^{-1}(p_{i})$ for $1\leq i\leq9$, we need to prove that $L$ is nef.

First, we give a decompostion $L=L_{1}+L_{2}+L_{3}$, where
\begin{equation*}
  \begin{cases}
    L_{1}\doteq 2\Delta_{\infty}+2\Gamma-\Sigma_{i=1}^{7}E_{i}-E_{8}, \\
    L_{2}\doteq 2\Delta_{\infty}+2\Gamma-\Sigma_{i=1}^{7}E_{i}-E_{9}, \\
    L_{3}\doteq 6\Delta_{\infty}+8\Gamma-3\Sigma_{i=1}^{7}E_{i}-4\Sigma_{i=8}^{9}E_{i}. \\
  \end{cases}
\end{equation*}
Now $L_{1}$ and $L_{2}$ are nef by Lemma \ref{nef 1} and $L.L_{1}=L.L_{2}=4$. Since $h^{0}(P,L_{3})\geq 1$ and $L.L_{3}=7$, we only need to prove that $L_{3}$ is effective.
So it is enough to prove that $L_{2}$ is irreducible.

Secondly, we need to give a defining equation of the image of $L_{3}$ on $\mathds{F}_{0}$. We can choose a coordinate system such that the defining equation is
$$g(x,y)\doteq \sum_{0\leq i \leq 6, 0\leq j \leq 8}a_{i,j}x^{i}y^{j},$$
where $div(x)\equiv\Delta_{\infty}$ and $div(y)\equiv\Gamma$ and $\{a_{i,j}\}$ are undetermined coefficients. We denote $p_{k}\doteq (x_{k},y_{k})$ and
$$g_{\alpha,\beta}(p_{k})\doteq \frac{\partial^{\alpha+\beta}}{\partial x^{\alpha}\partial y^{\beta}}g(x_{k},y_{k}).$$
Since $p_{8}$ and $p_{9}$ are ordinary 4 points of $g(x,y)$, so we have the following 20 linear equations of $\{a_{i,j}\}$
$$g_{\alpha,\beta}(p_{k})=0(0\leq \alpha,0\leq \beta,0\leq \alpha+\beta \leq 3, 8\leq k\leq 9).$$
Since $p_{k}(3\leq k\leq 7)$ are ordinary 3 points of $g(x,y)$, so we have the following 30 linear equations of $\{a_{i,j}\}$
$$g_{\alpha,\beta}(p_{k})=0(0\leq \alpha,0\leq \beta,0\leq \alpha+\beta \leq 2, 3\leq k\leq 7).$$
Let $C$ be the section passing through $p_{2}$. For $p_{1}$ and $p_{2}$, we can assume that $p_{1}$ is the intersection of $E_{2}$ and the strict transform of $C$ to reduce the difficulty of calculation. So we have the following 12 linear equations of $\{a_{i,j}\}$
$$g_{\alpha,\beta}(p_{k})=0(0\leq \alpha,0\leq \beta ,0\leq \alpha+2\beta \leq 2+3, k=2).$$
Now we have 63 undetermined coefficients $\{a_{i,j}\}$ and 62 homogeneous linear equations. If we assume that
$$a_{0,0}=1,$$
then we can get a unique solution $\{a_{i,j}\}$ and a unique defining equation $g(x,y)$, with the help of computer, for the general choice of $\{p_{k}\}$.

Finally, we can use the function command ``factor()'' in MATLAB to give the irreducible decompostion of $g(x,y)$. After the calculation, we know that $g(x,y)$ is irreducible for the general choice of $\{p_{k}\}$.

\bigskip


\begin{thebibliography}{10}

\bigskip

\bibitem{BPV}W. P.Barth, K. Hulek, C. A. M. Peters, A. Van de Ven:
Compact Complex Surfaces, {\em Springer-Verlag}, 2004

\bibitem{Bauer 1}I.C. Bauer:
Surfaces with $K^{2}=7$ and $p_{g}=4$, {\em Mem. Amer. Math. Soc.} {\bf 152} (2001).

\bibitem{Bauer 2}I.C. Bauer, R. Pignatelli:
Surfaces with $K^{2}=8$, $p_{g}=4$ and canonical involution, {\em Osaka J. Math.} {\bf 46}, (2009), no. 3, 799-820.

\bibitem{Bombieri}E. Bombieri:
Canonical models of surfaces of general type, {\em Inst. Hautes. Etudes Sci. Publ. Math.} {\bf 42} (1973) 171-219.

\bibitem{Calabri}A. Calabri, C. Ciliberto. M. Mendes Lopes:
Numerical Godeaux surfaces with an involution, {\em Trans. Amer. Math. Soc.} {\bf 359}, (2007), no. 4, 1605-1632.

\bibitem{Catanese 1}F. Catanese:
On the moduli spaces of surfaces of general type, {\em  J. Differential Geom.} {\bf 19} (1984), no. 2, 483-515.

\bibitem{Catanese}F. Catanese, R. Pignatelli:
Fibrations of low genus I, {\em Ann. Sci. Ecole Norm. Sup.} (4) {\bf 39} (2006), 1011-1049

\bibitem{Catanese 2}F. Catanese, W. Liu, R. Pignatelli:
The moduli space of even surfaces of general type with $K^2=8$, $p_g=4$ and $q=0$, {\em J. Math. Pures Appl.} (9) {\bf  101} (2014), no.6, 925-948

\bibitem{Ciliberto}C. Ciliberto, P. Francia and M. Mendes Lopes:
Remarks on the bicanonical map for surfaces of general type, {\em Math. Z.}  {\bf 224} (1997), 137-166.

\bibitem{Dolgachev}I. V. Dolgachev:
Classical Algebraic Geometry - A Modern View, {\em Cambridge University Press} (2012)

\bibitem{Horikawa 1}E. Horikawa:
On deformations of quintic surfaces, {\em Inventiones mathematicae.} {\bf 31}, (1975), 43-85.

\bibitem{Horikawa 2}E. Horikawa:
On algebraic surfaces with pencils of curves of genus 2, {\em in Complex Analysis
and Algebraic Geometry, Iwanami Shoten, Tokyo}, 1977, 79-90.

\bibitem{Horikawa 3}E. Horikawa:
 Algebraic surfaces of genaral type with small $c^{2}$, I-V, 1976-1981

\bibitem{Kollar}J. Kollar:
Birational Geometry of Algebraic Varieties, {\em Cambridge University Press}, 1998

\bibitem{Konno}K. Konno:
On the irregularity of special non-canonical surfaces, {\em Publ. Res. Inst. Math. Sci.} {\bf 30},
(1994), 671-688.

\bibitem{Liu}X. Liu:
Algebraic surfaces of general type with $K^{2} = 2p_{g} - 1,p_{g}\geq5$, {\em Acta Math. Sinica (N.S.)} {\bf 12} (1996), 234-243.

\bibitem{Marti Sanchez}M. Marti Sanchez:
Surfaces with $K^{2} = 2\chi - 2$ and $p_{g}\geq5$, {\em Geom. Dedicata}, {\bf 150} (2011), 49- 61.

\bibitem{Murakami 1}M. Murakami:
A bound for the orders of the torsion groups of surfaces with $K^{2} = 2\chi - 1$, {\em Math. Z.} {\bf 253} (2006), 251- 262.

\bibitem{Murakami 2}M. Murakami:
Remarks on surfaces with $K^{2} = 2\chi - 1$ having non-trivial 2-torsion, {\em J. Math. Soc. Japan}, {\bf 65} (2013), 51-95.

\bibitem{Oliverio}P.A. Oliverio:
On even surfaces of general type with $K^{2 }= 8$, $p_{g} = 4$, $q = 0$, {\em Rend. Sem. Mat.
Univ. Padova}, {\bf 113} (2005), 1-14.

\bibitem{Werner}C. Werner:
Surfaces of general type with $K^{2} = 2\chi - 1$, {\em  Kyoto J. Math.} {\bf 55} (2015), no. 1, 29-41.

\bibitem{Zucconi}F. Zucconi:
Numerical inequalities for surfaces with canonical map composed with a pencil, {\em Indag. Math. (N.S.)}, {\bf  9},  (1998), 459-476.


\end{thebibliography}
\end{document}